\ifpdf\usepackage{pst-pdf}\else\fi
\theoremstyle{plain}
\newtheorem{theorem}{Theorem}[section]
\newtheorem{lemma}[theorem]{Lemma}
\newtheorem{corollary}[theorem]{Corollary}
\newtheorem{example}[theorem]{Example}
\theoremstyle{remark}
\newtheorem{definition}[theorem]{Definition}
\newcommand{\nmb}[2]{\ifx!#1{\ref{nmb:#2}}%
\else\if.#1{\label{nmb:#2}}%
\else\if0#1{\label{nmb:#2}}%
\else{{#2}}%
\fi\fi\fi}
\newcommand{\Cut}{\mathrm{Cut}}
\newcommand{\iid}{i.i.d.\@}
\def\o{\circ}
\def\al{\alpha}
\def\si{\sigma}
\def\ta{\tau}
\def\ph{\varphi}
\def\ps{\psi}
\def\Om{\Omega}
\def\d{\operatorname{d}\!}
\def\i{^{-1}}
\def\x{\times}
\def\Id{\operatorname{Id}}
\def\Aut{\operatorname{Aut}}
\def\S{S}
\def\AMSonly#1{}
\def\Cov{\operatorname{Cov}}
\def\proj{\pi}
\newcommand{\partition}[1]{(\mathbf #1)}
\def\probspace{\mathcal P}
\title[]{Geometry of Sample Spaces}
\author[P.~Harms]{Philipp Harms}
\address{Nanyang Technological University, Singapore}
\email{philipp.harms@ntu.edu.sg}
\author[P.~Michor]{Peter W. Michor}
\address{Faculty of Mathematics, University of Vienna, Austria}
\email{peter.michor@univie.ac.at}
\author[X.~Pennec]{Xavier Pennec}
\address{Universtit\'e C\^ote d'Azur and Inria, Sophia Antipolis, France}
\email{xavier.pennec@inria.fr}
\author[S.~Sommer]{Stefan Sommer}
\address{Department of Computer Science, University of Copenhagen, Denmark}
\email{sommer@di.ku.dk}
\keywords{Statistics on metric spaces, Geometric statistics, Fréchet means, k-means, Consistency, Central-limit theorem, Wasserstein geometry}
\subjclass[2020]{Primary 62R20, secondary 62F12}
\begin{document}

\begin{abstract}
In statistics, independent, identically distributed random samples do not carry a natural ordering, and their statistics are typically invariant with respect to permutations of their order. Thus, an $n$-sample in a space $M$ can be considered as an element of the quotient space of $M^n$ modulo the permutation group. The present paper 
takes this definition of sample space and the related concept of orbit types as a starting point for developing a geometric perspective on statistics. We aim at deriving a general mathematical setting for studying the behavior of empirical and population means in spaces ranging from smooth Riemannian manifolds to general stratified spaces.
 
We fully describe the orbifold and path-metric structure of the sample space when $M$ is a manifold or path-metric space, respectively. These results are non-trivial even when $M$ is Euclidean. We show that the infinite sample space exists in a Gromov--Hausdorff type sense and coincides with the Wasserstein space of probability distributions on $M$. We exhibit Fr\'echet means and $k$-means as metric projections onto 1-skeleta or $k$-skeleta in Wasserstein space, and we define a  new and more general notion of polymeans. This geometric characterization via metric projections applies equally to sample and population means, and we use it to establish asymptotic properties of polymeans such as consistency and asymptotic normality.
\end{abstract}

\maketitle

\section{Introduction} \nmb0{1}

Following the pioneering developments of directional statistics \cite{jupp89} 
and shape statistics \cite{kendall_shape_1984,kendall_shape_1999,dryden_statistical_1998}, there is a growing need in  many application domains for the statistical analysis of populations of objects in complicated non-Euclidean spaces. One can cite for instance tree-spaces in biology \cite{BHV01}, Riemannian manifolds and Lie groups, including diffeomorphism groups, in medical image analysis and computer vision \cite{pennec_intrinsic_2006,pennec_sommer_fletcher_2020,turaga_riemannian_2016}, or more generally stratified spaces \cite{marron_overview_2014}. 
With the choice of a relevant distance, a natural generalization of the central values of a population of objects in these spaces is the Fr\'echet $p$-mean, that is the set of  minima of the mean distance to the power $p$~\cite{frechet_les_1948}. While the choice of $p=2$ is often used because it corresponds to the usual arithmetic, lower values of $p$ up to $p=1$ defining the median (``valeur equiprobable'' in Fr\'echet's words) are also often useful for robust statistics. 

This paper develops a general mathematical setting to study the behavior of empirical and population Fr\'echet $p$-means in spaces ranging from smooth Riemannian manifolds to general stratified spaces. We start from the key observation that independent, identically distributed (\iid{}) random samples do not carry a natural ordering, and their statistics are typically invariant with respect to permutations of their order. 
Thus, an $n$-sample in a space $M$ can naturally be considered as an element of the quotient space $M^n/\S_n$ of $n$-tuples modulo the permutation group $\S_n$. 
This space shall accordingly be called \emph{sample space}. 
The paper takes this definition as a starting point for developing a geometric perspective on statistics, guided by the notion of orbit type. 
This way, we provide a theoretical basis for further investigations on unordered samples in non-Euclidean spaces.

\subsection{Background}
For non-positively curved spaces in the sense of Alexandrov, the 2-mean is always unique when it exists \cite{sturm_probability_2003}. For positively curved Riemannian manifolds, an important effort has been spent in determining the convexity conditions on the distribution that ensure uniqueness \cite{karcher_riemannian_1977,buser_gromovs_1981,afsari_riemannian_2011}. 
However, many very useful distributions such as wrapped or truncated Gaussian distributions on the tangent spaces do not fulfill these conditions even if they have a unique Fr\'echet mean. 

Asymptotic properties of the sample mean for distributions on Riemannian manifolds with a unique population Fr\'echet mean were  studied by Bhattacharya and Patrangenaru \cite{bhattacharya_nonparametric_2002,bhattacharya_large_2003,bhattacharya_large_2005}. In particular, they showed the consistency of the sample Fr\'echet mean $\bar x_n$ of $n$ \iid{} samples of a random variable $x$ for large sample sizes (law of large numbers), building on a strong consistency result of \cite{ziezold_expected_1977}. Under the Karcher and Kendall convexity conditions for the uniqueness of the population mean $\bar x$, the Bhattacharya-Patrangenaru central limit theorem (CLT) further states that the random variables $u_n = \sqrt{n}  \log_{\bar x} (\bar x_n)$ converge in distribution to the Gaussian $\mathcal{N}(0, {\bar H}^{-1} \text{Cov}(x) {\bar H}^{-1})$ in the tangent space at $\bar x$ whenever the expected Hessian $\bar H$ of half the Riemannian squared distance at the population mean $\bar x$ is invertible. 
This type of CLT based on the delta method was further generalised in \cite{kendall_limit_2011} to non-\iid{} variables and in \cite{huckemann_intrinsic_2011} to  summary statistics other than the mean, such as principal geodesics. 

In non-manifold stratified spaces of negative curvature, an intriguing phenomenon  was discovered 10 years ago: the Fr\'echet mean may be sticky on singular strata \cite{hotz_sticky_2013}. 
A regular random variable (that is a not fully concentrated on singular strata) whose Fr\'echet mean is located on a singular stratum is said to have a sticky mean if a sufficiently small variation of that  random variable continues to have its Fr\'echet mean on the singular stratum. In other words, the singular strata are attractive. It is surprising that a CLT can still be derived under these conditions \cite{hotz_sticky_2013}. This suggests that some regularity can be used for deriving CLTs in more general settings. 
Stickiness does not seem to happen in positive curvature. For instance, Kendall shape spaces in three or higher dimensions are stratified, but the Fr\'echet mean of regular random variables was shown to belong to the top regular stratum (manifold-stability) 
 \cite{huckemann_meaning_2012}. 
In other words, singular strata of that kind are repulsive. 

More recently, an apparently opposite unusual behavior of the CLT was discovered with smeary means, where the empirical Fr\'echet means converge at an asymptotic rate lower than $\sqrt{n}$; see \cite{eltzner_smeary_2019} e.g. Other results show that intermediate repulsive or attractive behaviours  can happen on Riemannian manifolds, controlled either by the curvature \cite{pennec_curvature_2019,eltzner_geometrical_2019} or by the topology \cite{hundrieser_finite_2020}. Thus,  classical tests based on asymptotic results for Euclidean spaces might be biased, which is a critical problem for many applications. This highlights the need for a new mathematical framework to study the distribution of the empirical Fr\'echet mean,
either in the small sample regime or asymptotically.

While considering $n$-samples disregarding ordering is not new, the literature is sparse in linking geometric properties of the quotient space to sample statistics. In the Euclidean case, de Finetti's theorem \cite{finetti_funzione_1930,finetti_prevision_1937} and the theory of Hewitt and Savage \cite{hewitt_symmetric_1955} on exchangeability and presentability characterized distributions invariant to finite permutations leading to central limit theorems based on exchangeability instead of independence \cite{chernoff_central_1958,blum_central_1958,klass_central_1987}. 
We here develop a similar theory using additional geometric structures.

\subsection{Overview and results}

The convenient level of generality that we adopt is that of path-metric spaces \cite{gromov1999metric, burago2001course}, see \nmb!{8.1}, where the distance is given by the infimum of the length of curves joining the two points; for complete path-metric spaces the infimum is a minimum, see \nmb!{8.2}.

We first describe in Section~\nmb!{2} the orbifold (resp. path-metric) structure of the sample space $M^n/\S_n$ when $M$ is a manifold (resp. a path-metric space). These results are non-trivial even when $M$ is Euclidean but well known in the realm of reflection groups and Weyl chambers. The sample space $M^n/\S_n$ can be stratified by the number of pairwise distinct points. The regular part $(M^n/\S_n)_{\text{reg}}$ contains the unordered configurations where the $n$ points are distinct. The lower dimensional strata are called the $q$-skeleta, see \nmb!{2.2}, and comprise unordered configurations with exactly $q < n$ distinct points. 
A finer stratification classifying orbit types is based on the partition $\partition{k}\coloneqq(k_1\ge\dots\ge k_q)$ of $n$  describing the number of identical points; see \nmb!{2.5} and \nmb!{2.6}. 
Sub-partitioning (distinguishing some of the points that were previously identified) gives a half-ordering on partitions which are thus organized in a geometric lattice structure. 
The orbit-type stratum $(M^n/\S_n)_{(n)}$ with the smallest partition $(n)$ is the \emph{diagonal} $\{x:x_1=\dots=x_n\} \simeq M$ where all points coincide. This is the 1-skeleton, which can be identified with the base manifold $M$. At the other end of the lattice, the regular orbit stratum  $(M^n/S_n)_{\text{reg}} = (M^n/S_n)_{(1\ge1\ge\dots\ge1)}$ is the open, dense, connected, and locally connected subset of all unordered configurations with $n$ distinct points. The closure of $(M^n/\S_n)_{\partition{k}}$ in $M^n/\S_n$ is the disjoint union of all $(M^n/\S_n)_{\partition{k'}}$ with $\partition{k'}\le \partition{k}$; see \nmb!{2.10}. The $q$-skeleton of $M^n/\S_n$ is the the union of all orbit strata $(M^n/\S_n)_{\partition{k}}$ corresponding to all partitions 
$\partition{k}=(k_1\ge\dots \ge k_p)$  with  length $p\le q \leq n$. 
The projection to $q$-skeleta and orbit strata will be used in Section \nmb!{5} to characterize the Fr\'echet $p$-mean and to define a generalization called polymeans.   

Section~\nmb!{3} investigates the metric properties of the sample spaces when we assume that  $M$ is a complete path-metric space. 
The  $L_p$ metric $d_p(x,y) = \left(\frac1n\sum_{i=1}^n d(x_i,y_i)^p\right)^{1/p}$ with $p \in [1,\infty)$ on $M^n$ induces a canonical quotient metric on the sample space $(M^n/\S_n, \bar d_p)$, which is then a complete path-metric space; see \nmb!{3.2}. Moreover, orbit-type strata have convex closures, and a minimizing geodesic in the sample space $(M^n/\S_n,\bar d_p)$ is the projection of a minimizing geodesic in the configuration space $(M^n,d_p)$. When $M$ is Riemannian and $p=2$, one can show that geodesics are more regular at interior points than at their end-points, \nmb!{3.7}. However, this assertion is generally wrong for non-Riemannian complete path-metric spaces, like for instance for the 3-spider, \nmb!{3.8}. This lack of regularity could be linked to stickiness. 

In order to investigate sub-samples (bootstrap) and infinite samples together in the same space, we show in \nmb!{4.7} that the sample space $(M^n/\S_n,\bar d_p)$ is isometric to the space of mixtures of $n$-atomic measures (the empirical law of the samples) endowed with the $p$-Wasserstein metric. Moreover, the infinite sample space $\lim_{n\to\infty}M^n/\S_n$ exists in a weakened Gromov--Hausdorff type sense and coincides with the $p$-Wasserstein space $(\probspace^p(M),\bar d_p)$ of $p$-integrable probability distributions on $M$; see \nmb!{4.8}. The extension of skeleta and orbit-type strata to infinite sample spaces can then be done easily: the \emph{$q$-skeleton} in the infinite-sample space $\probspace^p(M)$ is the subset $\probspace(M)_q$ of all probability distributions with at most $q$ support points; see \nmb!{4.11}. 
Similarly, for any partition $\partition{k} \coloneqq (w_1\geq\cdots\geq w_q)$ consisting of non-negative weights $w_i$ summing up to $1$, the \emph{$\partition{k}$-stratum} in the infinite-sample space $\probspace^p(M)$ is the subset of mixtures $P=\sum_{i=1}^q w_i \delta_{x_i} \in \probspace(M)_q$ with $q$ distinct points $x_i$. It is interesting to note that such a mixture of $q$ Diracs is realized in a finite sample space for some $n$ if the weights are all rational, but irrational weights can only be achieved in the infinite-sample limit.

With this setting, we are in position to exhibit in Section \nmb!{5}  empirical and population Fr\'echet means as metric projections onto the 1-skeletum in sample space or Wasserstein space, and we define a  new and more general notion of empirical and population  polymeans by the projection on the $q$-skeleta $(M^n/\S_n)_q$  or on the $\partition{k}$-strata $(M^n/\S_n)_{\partition{k}}$. These polymeans can be interpreted as the clusters of the well known $k$-means clustering algorithm: the $k$ distinct points are the cluster centroids (we also call them the unweighted polymeans) and the weights $w_i$ are the relative masses of the clusters. 
As everything is defined for $p$-integrable distributions ($p \geq 1$), our definitions are actually valid for general Fr\'echet $p$-means and $p$-power $k$-means. Since $q$-skeleta and $\partition{k}$-strata are closed in all sample spaces, as well as in the $p$-Wasserstein space, the existence of empirical and population polymeans is ensured. The uniqueness is a much harder problem. In the Riemannian case with $p=2$, recent results on the regularity of the singular set of the distance to a sufficiently regular set show that empirical polymeans of \iid{} samples with an absolutely continuous law are almost surely unique. This partly extends the previous result of \cite{arnaudon_means_2014} on the uniqueness of the empirical Fr\'echet $p$-mean.  

We turn in Section \nmb!{6} to probability distributions on sample spaces. It turns out that the correct space of infinite samples is not the quotient space $M^{\mathbb N}/\S_{(\mathbb N)}$ but the space $\probspace(M)$ of probability distributions on $M$. Indeed, using this definition one obtains as in the theory of Hewitt and Savage \cite{hewitt_symmetric_1955} that probability distributions on infinite sample spaces correspond exactly to symmetric probability distributions on configuration spaces, which in turn correspond exactly to mixtures of product distributions. This definition is also in line with the infinite-sample limit \nmb!{4.8}. The analogous statement for random variables instead of probability distributions is that random samples correspond exactly (possibly after passing to an extended probability space) to conditionally \iid{} random configurations; see \nmb!{6.6}. 

This setting allows us to establish in Section \nmb!{7} asymptotic properties of polymeans. 
We first show that the empirical $q$-means are strongly consistent estimators for the population $q$-means, in the sense that any accumulation point of the sets of empirical $q$-means is a population $q$-mean. Thus, when the population $q$-mean is unique, any measurable selection of empirical $q$-means converges in probability to the population $q$-mean, and we may inquire about the rate of convergence. We derive in \nmb!{7.4} an upper bound on the convergence rate of empirical $q$-means to the population $q$-mean. The bound depends first on the convergence rate in Wasserstein space of empirical distributions---a well studied subject---and second on the subspace geometry of the $q$-skeleton within Wasserstein space---a purely geometric question.
It remains an open problem if the bound is sharp and if $q$-means are asymptotically normal after a suitable normalization. However, when $M$ is a Riemannian manifold, we establish in \nmb!{7.6} the asymptotic normality of unweighted $q$-means for any $p\geq 1$ under mild conditions (null measure of the union of the cut loci of the centroids and of their ``medial axis'' and non-degenerate expected Hessian of the power $p$ distance to the closest centroid). We further refine this central limit theorem in \nmb!{7.7} from \iid{} to exchangeable sequences under some additional conditional independence assumptions. 

In the appendix we collect some tools from path-metric geometry.

\subsection{Open problems and future work}

Our framework opens the door to many further investigations by linking two traditionally distinct strands of literature, namely, statistics on manifolds and orbifold or path-metric geometry.
Tools from these fields can be fruitfully combined.
The setup is fully general and applies to curved spaces and more general stratified spaces, as needed in the previously cited applications. It also encompasses Fr\'echet $p$-means and not only the classical 2-mean, which opens the way to many useful asymptotic results for robust statistics. 

Our results also suggest that the non-standard convergence rates in the CLT are not only due to the geometry of $M$ but also the subspace geometry of the $k$-skeleta within the sample spaces.
For instance, considering the Fr\'echet mean as a projection on the $1$-skeleton casts a new geometric light on the uniqueness problem: in a Riemannian  manifold, it is unique whenever there is no mass on the singular set of the distance function to the 1-skeleton. Thus, one can conjecture that the geometry of the ``medial axis'' of the $q$-skeleton in $p$-Wasserstein space controls the uniqueness of the polymeans and that advances on the sub-space geometry of this set within Wasserstein space would extend this uniqueness theorem to more general settings. 

Likewise, the rate of convergence of the empirical 2-mean towards the population 2-mean is controlled by the eigenvalues of the expected Hessian of the squared distance (Corollary~\nmb!{7.7}).
The convergence rate towards the limiting distribution 
in the direction of an eigenvector falls below $\sqrt{n}$  whenever the corresponding eigenvalue vanishes. 
Conversely, stickiness could be induced by eigenvalues going to infinity. This last behavior cannot happen in smooth Riemannian manifolds, but it can be approached by concentrating the curvature at singular points. This could be a way to study stickiness on smoothable manifolds. 
For \iid{} samples with distribution $P$, we conjecture that these condition could be linked to the convexity or concavity of the geodesic distance in Wasserstein space from $P$ to the polymean in the $k$-skeleton, and thus that it can be controlled by some kind of Ma--Trudinger--Wang (MTW) condition \cite{figalli_convexity_2015}. 

\section{Orbit type stratification of sample spaces}\nmb0{2} 

Let $M$ be a topological space. 
For any natural number $n \in \mathbb N_{>0}$, the permutation group $\S_n$ of $n$ symbols acts on the $n$-fold product $M^n$ by permutation of the components. 
In symbols, we shall write $x_\sigma\coloneqq x\circ \sigma$ for the action of $\sigma \in \S_n$ on $x \in M^n$. 

\begin{definition}[Configurations and samples]
\nmb.{2.1}
An \emph{$n$-point configuration} or \emph{ordered $n$-sample} is an element of $M^n$, and this space is called \emph{(ordered) configuration space}.
An \emph{$n$-sample} is an element of the quotient space $M^n/\S_n$, and this space is called \emph{sample space} or \emph{unordered configuration space}.
The \emph{projection} is denoted by $\proj\colon M^n\to M^n/\S_n$.
\end{definition}

Note that this definition of configuration spaces differs from the one commonly used in topology, where the points are required to be pairwise distinct.
The set of pairwise distinct points is an open subset of $M^n$, and its fundamental group in the case $M=\mathbb R^2$ is the \emph{braid group}.
In contrast, we also consider the case where only $q<n$ points are mutually distinct:

\begin{definition}[Skeleta]
\nmb.{2.2}
A configuration $(x_1,\dots,x_n)$ is said to belong to the \emph{$q$-skeleton} if it consists of at most $q\in\mathbb N$ distinct points $x_i$. 
As the number of distinct points is $\S_n$-invariant, there is a corresponding notion of $q$-skeleta of samples.
\end{definition}

The name skeleton is taken from the theory of simplicial complexes and cell complexes. 
The filtration of sample space into skeleta is rather coarse, and finer stratifications are needed to fully describe the local geometry of sample space.
This is done next.

\begin{definition}[Orbifolds {\cite{wiki:orbifolds}}]
\nmb.{2.3}
A Hausdorff topological space $\mathcal O$ is an orbifold, if the following data are given: 
\begin{itemize}
\item 
An open cover $(U_i)$ of $\mathcal O$ which is closed under forming finite intersections. 
\item 
For each $i$ there is an
open subset 
$V_i\subset \mathbb R^N$ which is invariant under a faithful linear action of a finite group $G_i$ on $\mathbb R^N$ and a $G_i$-invariant quotient map $\proj_i\colon V_i\to V_i/G_i \cong U_i$.
\item 
If $U_i\subset U_j$ then there is an injective group homomorphism $\ph_{ij}:G_i\to G_j$  and a gluing map $\ps_{ij}$ from $V_i$ to an open subset of $V_j$ which is $G_i$-equivariant in the sense that $\ps_{i,j}(g.x)= \ph_{ij}(g).\ps_{ij}(x)$ for all $x\in V_i$ and such that $\proj_j\o \ps_{ij} = \proj_i$.
\end{itemize}
In this situation $(V_i,\proj_i, G_i)$ is then called an orbifold chart. 
\end{definition}

\begin{lemma}[Orbifold structure of sample space]
\nmb.{2.4}
If $M$ is a manifold, then the sample space $M^n/\S_n$ is an orbifold. 
\end{lemma}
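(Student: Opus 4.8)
The plan is to realize $M^n/\S_n$ as the quotient of the manifold $M^n$ by the smooth action of the finite group $\S_n$, and to manufacture orbifold charts from the isotropy groups by linearizing the action through an equivariant exponential map. Fix a configuration $x=(x_1,\dots,x_n)\in M^n$ and let $G_x\coloneqq\{\sigma\in\S_n: x\o\sigma=x\}$ be its isotropy group. Decomposing the index set $\{1,\dots,n\}$ into the maximal blocks on which the coordinates $x_i$ agree, $G_x$ is precisely the product of the symmetric groups on these blocks, i.e. a Young subgroup of $\S_n$. Since $\S_n$ is finite, the orbit $\S_n.x$ is a discrete, hence closed, subset of the Hausdorff space $M^n$.

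To build a chart around $\proj(x)$ I would first equip $M$ with a Riemannian metric and give $M^n$ the product metric; as $\S_n$ merely permutes identical factors, it acts by isometries. The Riemannian exponential map $\exp_x\colon T_xM^n\to M^n$ is then $G_x$-equivariant for the linear isotropy action of $G_x$ on $T_xM^n$, because every $\sigma\in G_x$ fixes $x$ and is an isometry. Choosing a sufficiently small $G_x$-invariant ball $V\subset T_xM^n\cong\R^N$ with $N=n\dim M$, the map $\exp_x$ restricts to a $G_x$-equivariant diffeomorphism onto a neighborhood of $x$. Shrinking $V$ once more, I would enforce the slice (tube) condition: since the orbit $\S_n.x$ is finite and $M^n$ is Hausdorff, $V$ can be taken so small that no permutation outside $G_x$ carries $\exp_x(V)$ to a set meeting $\exp_x(V)$. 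Consequently $\proj$ identifies points of $\exp_x(V)$ only through $G_x$, so it descends to a homeomorphism $V/G_x\cong U\coloneqq\proj(\exp_x(V))$, and together with the quotient map $\proj_V\colon V\to V/G_x$ this yields the orbifold chart $(V,\proj_V,G_x)$.

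It then remains to verify, as Definition \nmb!{2.3} requires, that the induced linear action of $G_x$ on $\R^N\cong T_xM^n=\bigoplus_{i=1}^n T_{x_i}M$ is faithful. A permutation $\sigma\in G_x$ acts by permuting the summands $T_{x_i}M$ according to the relations $x_i=x_{\sigma(i)}$; if $\sigma\neq\mathrm{id}$ it nontrivially interchanges at least two summands of positive dimension $\dim M\geq 1$, so the induced linear map is not the identity. Hence the local action is faithful. (If $\dim M=0$, then $M^n/\S_n$ is discrete and is trivially an orbifold with trivial local groups.)

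Finally I would assemble these charts into an atlas, taking the collection of all such $U$ and closing it under finite intersections. Whenever $U_i\subset U_j$, the corresponding isotropy groups satisfy $G_i\leq g\,G_j\,g\i$ for a suitable $g\in\S_n$, which furnishes the injective homomorphism $\ph_{ij}$, while the two exponential charts supply the $G_i$-equivariant gluing map $\ps_{ij}$ with $\proj_j\o\ps_{ij}=\proj_i$. I expect the main obstacle to be exactly this compatibility bookkeeping, namely checking that the transition data $(\ph_{ij},\ps_{ij})$ can be chosen coherently on overlaps and triple intersections, rather than the construction of any single chart. Once the action has been linearized by $\exp_x$, however, this reduces to the standard slice-theorem analysis of quotients of manifolds by finite group actions, and the remaining verifications are routine.
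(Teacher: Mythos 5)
Your proof is correct, but it linearizes the local action differently from the paper. You equip $M^n$ with a product Riemannian metric, observe that the isotropy group $G_x=(\S_n)_x$ acts by isometries fixing $x$, and use the equivariance of $\exp_x$ to transport the action to a linear (permutation-of-summands) action on $T_xM^n\cong\R^N$, after which a standard slice/tube argument produces the chart $(V,\proj_V,G_x)$. The paper avoids the metric entirely: it chooses coordinate charts $(U_i,u_i)$ on $M$ around the points $x_i$ with the convention that coinciding points receive the \emph{same} chart, so that the product $u_1(U_1)\times\cdots\times u_n(U_n)\subseteq(\R^m)^n$ is already invariant under the linear permutation action of $(\S_n)_x$, and $\proj\o(u_1\i\times\cdots\times u_n\i)$ is the orbifold chart. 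The paper's route is more elementary and exploits the special product structure of the permutation action; yours is the general Bochner-linearization argument and therefore applies verbatim to any properly discontinuous action on a manifold --- a generality the paper itself points out in the remark following the proof. Your explicit verification of faithfulness (including the degenerate case $\dim M=0$) and of the slice condition $\sigma(\exp_x(V))\cap\exp_x(V)=\emptyset$ for $\sigma\notin G_x$ is a point in your favor: the paper leaves the analogous disjointness of the $U_i$ at distinct points implicit. Both proofs defer the compatibility of transition data $(\ph_{ij},\ps_{ij})$ on overlaps as routine, so no gap is opened there relative to the paper's own standard of detail.
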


\begin{proof}
For any $x\in M^n$, choose a chart $(U_i, u_i\colon U_i \to \mathbb R^m)$ such that whenever $x_i=x_j$ we have $(U_i,u_i)=(U_j,u_j)$. 
Then $u_1(U_1)\x \cdots\x u_n(U_n) \subseteq (\mathbb R^m)^n$ is invariant under the isotropy group $(S_n)_x$ and $\proj\o (u_1\i\x\cdots \x u_n\i)\colon u_1(U_1)\x \cdots\x u_n(U_n) \to \proj(U_1\x\cdots\x U_n)\subset M^n/S_n$ is the required orbifold chart. 
\end{proof}

The proof of \nmb!{2.4} shows more generally that the quotient space of a smooth manifold with respect to a properly discontinuous action of a group is an orbifold; in this case it is sometimes called a \emph{developable} or (by Thurston) a \emph{good} orbifold. 
To understand the orbifold structure of sample space, one has to describe the different \emph{orbit types}.

\begin{definition}[Orbit types]
\nmb.{2.5}
The orbit type of an ordered sample $x\in M^n$ is defined as the conjugacy class of its isotropy group $(\S_n)_x\coloneqq\{\si\in \S_n: x_\sigma=x\}$. 
As the orbit type is $S_n$-invariant, there is a corresponding notion of orbit types of samples in $M^n/\S_n$.
\end{definition}

The following theorem classifies the orbit types of sample space. 
It turns out that there are many different orbit types, one for each partition of the integer $n$.
This highlights the complicated geometry of sample space.

\begin{theorem}[Classification of orbit types]
\nmb.{2.6}
The orbit types in the configuration space $M^n$ are exactly given by the integer partitions of $n$ of the form
$$
n= k_1 + k_2 + \dots + k_q,\quad k_1\ge k_2\ge \dots \ge k_q\ge 1.
$$
We write $\partition{k}\coloneqq(k_1\ge\dots\ge k_q)$ for such a partition.
\end{theorem}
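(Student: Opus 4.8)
The plan is to compute the isotropy group $(\S_n)_x$ explicitly as a product of symmetric groups, and then classify such groups up to conjugacy in $\S_n$. First I would fix $x=(x_1,\dots,x_n)\in M^n$ and unwind the definition of the action: since $x_\sigma=x\o\sigma$, a permutation $\sigma$ lies in $(\S_n)_x$ exactly when $x_{\sigma(i)}=x_i$ for all $i$, i.e. when $\sigma$ preserves the level sets of the map $i\mapsto x_i$. Writing $v_1,\dots,v_q$ for the distinct values occurring among the $x_i$ and $B_j=\{i:x_i=v_j\}$ for the corresponding blocks, the sets $B_1,\dots,B_q$ form a set-partition of $\{1,\dots,n\}$, and $(\S_n)_x$ is precisely the Young subgroup $\S_{B_1}\x\cdots\x\S_{B_q}$ of permutations fixing this set-partition blockwise (each $\sigma\in(\S_n)_x$ must satisfy $\sigma(B_j)=B_j$, and may act arbitrarily within each $B_j$). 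Setting $k_j=|B_j|$ and ordering the blocks so that $k_1\ge\cdots\ge k_q$ produces a partition $\partition{k}$ of $n$ together with an isomorphism $(\S_n)_x\cong\S_{k_1}\x\cdots\x\S_{k_q}$.

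Next I would show that the conjugacy class of $(\S_n)_x$ depends only on the multiset of block sizes, that is, on the integer partition $\partition{k}$. The key observation is that conjugation acts by relabelling: for any $\tau\in\S_n$ one has $\tau\,\S_{B_j}\,\tau\i=\S_{\tau(B_j)}$, whence $\tau(\S_{B_1}\x\cdots\x\S_{B_q})\tau\i=\S_{\tau(B_1)}\x\cdots\x\S_{\tau(B_q)}$, again a Young subgroup whose blocks $\tau(B_j)$ have the same sizes $k_j$. This single identity yields both implications. If two configurations have equal integer partitions, their set-partitions have matching block sizes, so some $\tau$ carries one onto the other and thereby conjugates the isotropy groups into each other; conversely, since conjugation preserves the multiset of block sizes, equal orbit types force equal integer partitions. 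Hence the assignment $\partition{k}\mapsto[(\S_n)_x]$ is a well-defined injection from partitions of $n$ onto the set of realized orbit types.

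What remains is surjectivity: every partition $k_1\ge\cdots\ge k_q\ge1$ of $n$ must genuinely occur. Given such a partition I would choose $q$ distinct points $v_1,\dots,v_q\in M$ and form the configuration repeating $v_j$ exactly $k_j$ times; by the computation above its orbit type is $\partition{k}$. The one honest hypothesis this requires is that $M$ contain at least $n$ distinct points, so that even the finest partition $(1\ge\cdots\ge1)$ is realizable; this is automatic in the manifold setting of the surrounding section (and holds for any $M$ with $|M|\ge n$), and I would state it explicitly. I expect the only real subtlety — the main obstacle — to be the conjugacy classification in the second paragraph, specifically the care needed to argue that it is the multiset of block sizes, rather than merely the abstract isomorphism type or the order $\prod_j k_j!$ of the subgroup, that is the correct and complete conjugacy invariant. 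The relabelling identity $\tau\,\S_{B}\,\tau\i=\S_{\tau(B)}$ is exactly what makes this transparent, and it is the step I would foreground.
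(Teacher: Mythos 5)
Your proof is correct and follows essentially the same route as the paper's: both identify the isotropy group of a configuration as a Young subgroup $\S_{k_1}\times\cdots\times\S_{k_q}$ determined by the level sets of $i\mapsto x_i$, and both classify these subgroups up to conjugacy by the integer partition $(k_1\ge\cdots\ge k_q)$ via the relabelling identity for conjugation. Your direct computation through the blocks $B_j$ is marginally cleaner than the paper's detour through cycle types of fixing permutations, and your explicit remark that surjectivity requires $M$ to contain at least $n$ points flags a hypothesis the paper leaves implicit.
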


\begin{proof}
This follows from the fact that a point $x \in M^n$ is fixed by a permutation
$$
\si = (\si_1 \si_2 \dots \si_{k_1})(\si_{k_1+1}\dots\si_{k_1+k_2})\dots(\si_{k_1+\dots k_{q-1}+1}\dots \si_{k_1+\dots+k_p}) \in \S_n
$$ 
if and only if 
\begin{multline*}
x_{\si_1}=x_{\si_2}=\dots=x_{\si_{k_1}}, 
x_{\si_{k_1+1}}=\dots=x_{\si_{k_1+k_2}}, \quad\dots 
\\
\dots \quad x_{\si_{k_1+\dots k_{q-1}+1}}=\dots=x_{\si_{k_1+\dots+k_q}},
\end{multline*}
and all other $x_i$ being distinct.
Here $(k_1\ge k_2\ge\dots\ge k_p)$ with $k_1+\dots +k_p\le n$ is the \emph{cycle type} of the permutation $\si$. For our purpose 
we enlarge the cycle type to $(k_1\ge\dots\ge k_p\ge\dots\ge k_q):=(k_1\ge\dots\ge k_p\ge 1\dots\ge 1)$ until it becomes a \emph{partition} of $n$, denoted by 
$$\partition{k}=(k_1\ge\dots\ge k_q) \text{ with } k_1+\dots+k_q = n\,.$$
The conjugate by $\ta\in\S_n$ of the $k_1$-cycle $\si' = (\si_1\, \si_2 \dots \si_{k_1})$ is the $k_1$-cycle 
 $\ta\si'\ta\i = (\ta(\si_1)\, \ta(\si_2) \dots \ta(\si_{k_1}))$, and similarly for the other cycles in $\si$. 
Thus, the isotropy group of any $x$ as above is  conjugated to the subgroup 
$\S_{k_1}\x\S_{k_2}\x\dots\x\S_{k_p}$. 
Its conjugacy class is described by the cycle type $(k_1,\dots,k_p)$ with $k_1,\dots,k_q \in\mathbb N_{>0}$, and equivalently by its enlargement to a partition of $n$.
\end{proof}

The configuration space $M^n$ and the sample space $M^n/\S_n$ are \emph{stratified} by orbit type. 

\begin{definition}[Orbit-type strata]
\nmb.{2.7}
Let $(H)$ denote the conjugacy class of any subgroup $H$ of $\S_n$ corresponding to a partition $\partition{k}$. 
We write $(M^n)_{(H)}$ and $(M^n)_{\partition{k}}$ for the \emph{stratum} of all points in $M^n$ of orbit type $(H)$ and $\partition{k}$, respectively. 
Similarly, we write $(M^n/\S_n)_{(H)}$ and $(M^n/\S_n)_{\partition{k}}$ for the corresponding stratum in $M^n/\S_n$.
\end{definition}

\begin{lemma}[Orbit-type strata]
\nmb.{2.8}
The stratum $(M^n)_{\partition{k}}$ of orbit type $$\partition{k}\coloneqq(k_1\ge\dots\ge k_q)$$ consists of all $x=(x_1,\dots,x_n)$ such that $k_1$ of the the $x_i$ are equal to $y_1\in M$, $k_2$ of the remaining $x_i$ are equal to $y_2\ne y_1$ in $M$, and so on, until the remaining $k_q$ of the $x_i$ are equal to $y_q\in M$, and all $y_i$ are distinct. 
Thus, $(M^n)_{\partition{k}}$ is the disjoint union of its connected components, which are all homeomorphic to the open subset of pairwise distinct points in $M^q$.
\end{lemma}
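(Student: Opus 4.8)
The plan is to reduce the orbit-type condition to a purely combinatorial statement about the level sets of the index map $i\mapsto x_i$, and then to read off both the decomposition and the homeomorphism from it. First I would invoke the computation inside the proof of Theorem~\nmb!{2.6}: a permutation $\sigma$ fixes $x$ precisely when it preserves the level sets of $i\mapsto x_i$, so the isotropy group $(\S_n)_x$ is the direct product of the symmetric groups on these level sets. Hence its conjugacy class is determined exactly by the multiset of level-set sizes, which sorted decreasingly is $\partition{k}$. This already shows that $x\in(M^n)_{\partition{k}}$ if and only if the distinct values $y_1,\dots,y_q$ of $x$ occur with multiplicities $\{k_1,\dots,k_q\}$, all $y_i$ distinct, which is the description in the statement.

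Next I would organize the stratum by its \emph{labelled} combinatorial type. For each set partition $P=\{B_1,\dots,B_q\}$ of $\{1,\dots,n\}$ whose block-size multiset equals $\partition{k}$, let $S_P\subseteq (M^n)_{\partition{k}}$ be the set of $x$ whose level-set partition is exactly $P$. By the first step these $S_P$ are pairwise disjoint and cover $(M^n)_{\partition{k}}$. Fixing an ordering of the blocks, the map $\phi_P$ sending a $q$-tuple $(z_1,\dots,z_q)$ to the $x$ with $x_i=z_l$ for $i\in B_l$ is a continuous bijection from the open subset $M^{(q)}\subseteq M^q$ of pairwise distinct $q$-tuples onto $S_P$; its inverse reads off one coordinate per block and is continuous as well, so $\phi_P$ is a homeomorphism $M^{(q)}\cong S_P$.

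The key remaining point---and the only place where more than bookkeeping is needed---is that each $S_P$ is open and closed in $(M^n)_{\partition{k}}$, i.e.\ that the level-set partition is locally constant along the stratum. Here I would use that $M$ is Hausdorff (being a manifold): given $x\in S_P$ with distinct values $y_1,\dots,y_q$, choose pairwise disjoint open neighborhoods $y_l\in O_l$ and form the open box $U=\prod_{i} O_{l(i)}\subseteq M^n$, where $l(i)$ denotes the block containing $i$. For any $x'\in U\cap(M^n)_{\partition{k}}$, disjointness of the $O_l$ places the $q$ distinct values of $x'$ into distinct sets $O_l$, and a pigeonhole count (there are exactly $q$ blocks, each forcing at least one value into its $O_l$, and exactly $q$ values in total) shows that $x'$ is constant on every block $B_l$; hence $x'\in S_P$. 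Thus $S_P$ is open, and it is closed as the complement of the union of the remaining open pieces.

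Finally I would assemble the conclusion: $(M^n)_{\partition{k}}=\bigsqcup_P S_P$ with each $S_P$ clopen and homeomorphic to $M^{(q)}$, so the connected components of the stratum are exactly those of the individual pieces $S_P\cong M^{(q)}$ (and each $S_P$ is itself a single component precisely when $M^{(q)}$ is connected, e.g.\ when $\dim M\ge 2$). I expect the main subtlety to be exactly the local constancy of the combinatorial type, since that is what guarantees the pieces $S_P$ are genuinely separated rather than merely a set-theoretic partition; everything else is the standard identification of a labelled copy of $M^{(q)}$ sitting inside $M^n$.
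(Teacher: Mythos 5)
Your proof is correct and takes essentially the same route as the paper, whose entire proof is a one-line deferral to the description of orbit types in the proof of Theorem~2.6; you simply make explicit the details that deferral hides, in particular the clopenness of the labelled pieces $S_P$ via local constancy of the level-set partition, which is the genuinely non-trivial point. Your parenthetical caveat that each $S_P$ is a single component only when the set of pairwise distinct $q$-tuples in $M^q$ is connected is also a fair and correct reading of the (slightly loosely stated) second sentence of the lemma.
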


\begin{proof}
This follows from the description of orbit types in the proof of \nmb!{2.6}. 
\end{proof}

\begin{definition}[Half-ordering of orbit types]
\nmb.{2.9}
For two conjugacy classes $(H)$ and $(H')$ of subgroups $H$ and $H'$ in $\S_n$, we write $(H)\le(H')$ if $H$ is conjugated in $\S_n$ to a subgroup of $H'$. 
Correspondingly, for two partitions $\partition{k}=(k_1\ge\dots\ge k_q)$ and $\partition{k'}=(k'_1\ge\dots\ge k'_{q'})$, we write  $\partition{k}\ge\partition{k'}$ if $\partition{k}$ sub-partitions $\partition{k'}$.
\end{definition}

Note that the half-order between partitions is the inverse of the half-order between the corresponding conjugacy classes.
The \emph{diagonal} $\{x:x_1=\dots=x_n\}$ is the stratum with the largest conjugacy class $(S_n)$ and the smallest partition $(n)$. The projection onto the corresponding stratum in $M^n/\S_n$ is a homeomorphism. 
The \emph{regular stratum} is the open and dense subset of all configurations $x$ with mutually distinct components $x_i$. It has as orbit type the smallest conjugacy class $(\{\Id\})$ and the largest partition $(1\ge\dots\ge 1)$. The regular orbit stratum  $M^n_{(\{\Id\})} = M^n_{(1\ge1\ge\dots\ge1)}$ in $M^n/\S_n$ is open, dense, connected, and locally connected;
it will also be denoted by $M^n_{\text{reg}}$. Likewise for $(M^n/S_n)_{\text{reg}} = (M^n/S_n)_{(1\ge1\ge\dots\ge1)}$.

Note that for $q\le n$, the $q$-skeleton of $M^n/\S_n$ is the the union of all orbit strata $(M^n/\S_n)_{\partition{k}}$ corresponding to all partitions 
$\partition{k}=(k_1\ge\dots \ge k_p)$  with  length $p\le q$. 

\begin{lemma}[Closure of orbit-type strata]
\nmb.{2.10}
The stratum $(M^n)_{\partition{k'}}$ is contained in the closure of the stratum $(M^n)_{\partition{k}}$ if and only if $\partition{k'}\le\partition{k}$ if and only if 
$(\S_{k_1}\x\dots\x\S_{k_q})\le (\S_{k'_1}\x\dots\x\S_{k'_{q'}})$.
Moreover, the closure of $(M^n)_{\partition{k}}$ in $M^n$ is the disjoint union of all $(M^n)_{\partition{k'}}$ with $\partition{k'}\le \partition{k}$.
A similar statement holds with $M^n$ replaced by $M^n/\S_n$.
\end{lemma}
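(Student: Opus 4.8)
The plan is to establish the two genuinely new equivalences separately: the purely combinatorial one, $\partition{k'}\le\partition{k}\iff(\S_{k_1}\x\dots\x\S_{k_q})\le(\S_{k'_1}\x\dots\x\S_{k'_{q'}})$, which is the content of the note following \nmb!{2.9}, and the topological one relating either condition to the inclusion $(M^n)_{\partition{k'}}\subseteq\overline{(M^n)_{\partition{k}}}$. For the combinatorial equivalence, recall from \nmb!{2.6} that the orbit type $\partition{k}$ has isotropy, up to conjugacy, the Young subgroup $\S_{\partition{k}}\coloneqq\S_{k_1}\x\dots\x\S_{k_q}$. If $\partition{k}$ sub-partitions $\partition{k'}$ one may label the $n$ points so that every orbit-block of $\S_{\partition{k}}$ lies inside an orbit-block of $\S_{\partition{k'}}$, whence $\S_{\partition{k}}\subseteq\S_{\partition{k'}}$ directly; this gives the forward direction. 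For the converse I would use that $\S_{k_i}$ acts transitively on its block: if some conjugate of $\S_{\partition{k}}$ is contained in $\S_{\partition{k'}}$, then no $\S_{k_i}$-block can meet two distinct $\S_{\partition{k'}}$-blocks, so the $\partition{k}$-blocks refine the $\partition{k'}$-blocks, i.e. $\partition{k}$ sub-partitions $\partition{k'}$.

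For the topological side it suffices to prove the two inclusions of the ``Moreover'' part. First, if $\partition{k'}\le\partition{k}$ I would show $(M^n)_{\partition{k'}}\subseteq\overline{(M^n)_{\partition{k}}}$ by an explicit approximation. By \nmb!{2.8} a point $x\in(M^n)_{\partition{k'}}$ has $q'$ distinct values $y_1,\dots,y_{q'}$ with multiplicities $k'_1,\dots,k'_{q'}$. Since $\partition{k}$ refines $\partition{k'}$, each $k'_j$ is a sum of a group of parts of $\partition{k}$; I would perturb, for each $j$, the $k'_j$ coordinates equal to $y_j$ into pairwise distinct values, grouped with exactly the multiplicities prescribed by that group of $\partition{k}$-parts, chosen to converge to $y_j$. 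As $M$ is a manifold it has no isolated points, so such nearby distinct points exist; the resulting configurations lie in $(M^n)_{\partition{k}}$ and converge to $x$, giving $x\in\overline{(M^n)_{\partition{k}}}$.

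For the reverse inclusion the key point is that isotropy can only grow under limits. If $x_m\to z$ in $M^n$ with every $x_m$ of orbit type $\partition{k}$, then for each $\si\notin(\S_n)_z$ we have $z_\si\ne z$, and since reindexing by $\si$ is continuous and $M^n$ is Hausdorff, $(x_m)_\si\ne x_m$ for large $m$, i.e. $\si\notin(\S_n)_{x_m}$; finiteness of $\S_n$ then yields $(\S_n)_{x_m}\subseteq(\S_n)_z$ eventually. Thus $(\S_n)_z$ contains a conjugate of $\S_{\partition{k}}$, so by the combinatorial part $z$ has orbit type $\le\partition{k}$. This shows $\overline{(M^n)_{\partition{k}}}\subseteq\bigcup_{\partition{k'}\le\partition{k}}(M^n)_{\partition{k'}}$; combined with the previous paragraph it yields equality, and the union is disjoint because the orbit types partition $M^n$ (\nmb!{2.5}). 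All three iff-statements, together with the disjoint-union description, follow at once.

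Finally I would transfer the result to $M^n/\S_n$. The orbit map $\proj$ is continuous and closed: for closed $C$, the saturation $\proj\i(\proj(C))=\bigcup_{\si\in\S_n}\{x:x_\si\in C\}$ is a finite union of closed sets, so $\proj(C)$ is closed. A continuous closed surjection satisfies $\proj(\overline A)=\overline{\proj(A)}$, and since orbit type is $\S_n$-invariant with $(M^n/\S_n)_{\partition{k}}=\proj((M^n)_{\partition{k}})$, applying $\proj$ to the $M^n$-statement reproduces it verbatim on $M^n/\S_n$. I expect the main obstacle to be the converse combinatorial step, namely deducing refinement of partitions from a conjugate-subgroup relation between the Young subgroups, since this is exactly what converts the soft ``isotropy jumps up'' limit argument into the sharp conclusion that $z$ has orbit type $\le\partition{k}$; the transitivity of each factor $\S_{k_i}$ on its block is the crucial input there.
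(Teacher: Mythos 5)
Your proof is correct and follows essentially the same route as the paper, whose own argument is a one-line appeal to the description of the strata in \nmb!{2.8} together with the observation that ``at the boundary some distinct $x_i$ might become equal.'' You have simply supplied the details the paper leaves implicit --- the perturbation argument for the forward inclusion, the semicontinuity of isotropy groups under limits for the reverse one, the Young-subgroup combinatorics, and the closedness of $\proj$ for the passage to $M^n/\S_n$ --- all of which are sound (noting, as you do, that the approximation step uses that $M$ has no isolated points, which is the intended setting).
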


\begin{proof}
This follows from the description of the orbit-type strata given in \nmb!{2.8}, since at the boundary some distinct $x_i$ might become equal. 
\end{proof}

\begin{lemma}[Bundle structure of orbit-type strata]
\nmb.{2.11}
Let $\partition{k}\coloneqq(k_1\geq\dots\geq k_q)$ be a partition describing the orbit type $(H)$ with  $H\coloneqq\S_{k_1}\x\dots\x\S_{k_q}$. 
Then the projection $(M^n)_{\partition{k}}\to \S_n/N_{\S_n}(H)$ defines a topological fiber bundle, 
where $N_{\S_n}(H)$ is the normalizer of $H$ in $\S_n$, and where for any $\si\in\S_n$, the fiber over $\si.N_{\S_n}(H)$ is the fixed-point set $(M^n)^{\si\i H\si}\cap (M^n)_{\partition{k}}$.  
\end{lemma}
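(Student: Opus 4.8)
The plan is to exhibit $p$ as the map assigning to a configuration the coset that records its actual isotropy subgroup, to identify its fibers with the asserted fixed-point sets, and then---exploiting that the base $\S_n/N_{\S_n}(H)$ is a finite discrete set---to reduce the assertion ``topological fiber bundle'' to two checks: that $p$ is continuous and that all fibers are homeomorphic to a single model fiber.

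First I would set up the map. By \nmb!{2.6} and \nmb!{2.8} every $x\in(M^n)_{\partition{k}}$ has isotropy group $(\S_n)_x$ equal to a definite conjugate $\si\i H\si$ of $H\coloneqq\S_{k_1}\x\dots\x\S_{k_q}$, the conjugating element $\si$ being determined up to the normalizer $N\coloneqq N_{\S_n}(H)$; this is the standard bijection between the conjugates of $H$ and the coset space $\S_n/N$, under which $\si\i H\si$ corresponds to the coset of $\si$. I would define $p(x)$ to be that coset, and record the equivariance $(\S_n)_{x_\ta}=\ta\i(\S_n)_x\ta$ arising from the right action $x_\ta=x\o\ta$, so that $p$ intertwines the $\S_n$-action on $(M^n)_{\partition{k}}$ with the translation action on $\S_n/N$. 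By construction the fiber over the coset of $\si$ is $\{x:(\S_n)_x=\si\i H\si\}$, which equals $(M^n)^{\si\i H\si}\cap(M^n)_{\partition{k}}$ as claimed: any point of this intersection is fixed by $\si\i H\si$ and lies in the stratum, so its isotropy group is conjugate to $H$ and contains the subgroup $\si\i H\si$ of the same finite order, hence equals it.

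Next I would produce the fiber homeomorphisms. For each $\si$, right translation $x\mapsto x_\si=x\o\si$ is a self-homeomorphism of $M^n$ which, by the equivariance above, carries the fiber $(M^n)^H\cap(M^n)_{\partition{k}}$ over the base point onto the fiber $(M^n)^{\si\i H\si}\cap(M^n)_{\partition{k}}$. Hence every fiber is homeomorphic to the single model fiber $F\coloneqq(M^n)^H\cap(M^n)_{\partition{k}}$, which by \nmb!{2.8} is the ordered configuration space of $q$ pairwise distinct points of $M$.

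The remaining---and genuinely substantive---step is the continuity of $p$, equivalently the assertion that the isotropy subgroup $(\S_n)_x$ itself (not merely its conjugacy class) is locally constant on $(M^n)_{\partition{k}}$. Given $x$ with distinct values $v_1,\dots,v_q$, I would choose pairwise disjoint open neighborhoods $W_1,\dots,W_q\subseteq M$ of the $v_a$ (possible since $M$ is Hausdorff), and set $O\coloneqq\{y\in M^n: y_i\in W_{c(i)}\}$, where $c(i)$ is the block of the index $i$ in $x$. For $y\in O\cap(M^n)_{\partition{k}}$, coordinates from different $x$-blocks lie in disjoint $W_a$ and are therefore unequal, so every coincidence among the $y_i$ occurs within a single $x$-block; since $y$ nevertheless has exactly $q$ distinct values, each block must stay constant, giving $(\S_n)_y=(\S_n)_x$. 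Thus each fiber is open in $(M^n)_{\partition{k}}$ and, being the complement of the remaining finitely many fibers, also closed, so $p$ is continuous. Over the finite discrete base this openness of the fibers together with the fiber homeomorphisms yields local triviality at once---using the one-point neighborhood of $p(x)$---so $p$ is a topological fiber bundle with fiber $F$. I expect this local-constancy argument to be the crux: it is precisely where the Hausdorff separation of the $q$ distinct values and the stratum condition (which forbids blocks both from merging and from splitting) enter.
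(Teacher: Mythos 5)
Your proof is correct, and it is worth noting that the paper itself does not argue the lemma at all: its ``proof'' is a one-line citation to \cite[29.22]{Michor08}, asserting that the argument there, though stated for smooth manifolds, is purely topological. Your write-up in effect supplies the missing purely topological argument, and it does so along exactly the lines one would want: (i) the fiber identification via the order-counting trick (a subgroup of $(\S_n)_x$ conjugate to it must equal it), (ii) the right-translation homeomorphisms $x\mapsto x_\si$ identifying all fibers with the model fiber $(M^n)^H\cap(M^n)_{\partition{k}}$, and (iii) the genuinely substantive point, which you correctly isolate, that the isotropy group itself (not just its conjugacy class) is locally constant on the stratum, proved by separating the $q$ distinct values into disjoint neighborhoods and using that a point of the stratum can neither merge nor split the blocks. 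Combined with the discreteness of the base $\S_n/N_{\S_n}(H)$, openness of the fibers immediately gives both continuity of the projection and local triviality, so nothing is missing. Two cosmetic remarks: your step (iii) uses that $M$ is Hausdorff, which is not in the standing hypotheses of Section~\nmb!{2} (``topological space'') but is implicit in the lemma's intended scope and in the cited source; and the assignment $\si N_{\S_n}(H)\mapsto\si\i H\si$ is, strictly speaking, well defined on right cosets rather than left cosets --- a convention wrinkle already present in the statement of the lemma itself, not a defect of your argument.
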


\begin{proof}
The proof in \cite[29.22]{Michor08}, although given for smooth manifolds, is purely topological and applies here without change.
\end{proof}

\section{Path metrics on sample spaces}
\nmb0{3}

The category of \emph{path-metric spaces} is ideally suited for the description of sample spaces because it is well-behaved under quotients.
We refer to the appendix for the definition of path metrics and some of their properties, and to the book of Gromov \cite{gromov1999metric} and also \cite{burago2001course} or \cite{AKPetrunin2019} for further details. 
Throughout this section, $d$ is a complete path metric on the separable topological space $M$, $n \in \mathbb N_{>0}$, and $p \in [1,\infty)$. 

There are many choices of metrics on the configuration space $M^n$ which are consistent with the product topology. 
The following lemma describes some of them. 

\begin{lemma}[Path metrics on configuration spaces]
\nmb.{3.1}
The following is a complete path metric on the configuration space $M^n$:
\begin{align*}
d_p(x,y) 
&\coloneqq 
\Big(\frac1n\sum_{i=1}^n d(x_i,y_i)^p\Big)^{1/p},
\qquad
x,y \in M^n.
\end{align*}
The identity on $M^n$ is Lipschitz continuous between any of the metrics $d_p$, $p\in[1,\infty)$.
\end{lemma}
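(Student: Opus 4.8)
The plan is to check the metric axioms, completeness, the Lipschitz equivalence, and finally the path-metric property, the last being the only non-routine point. Symmetry and positivity of $d_p$ are inherited directly from $d$, and $d_p(x,y)=0$ forces $d(x_i,y_i)=0$ for every $i$, hence $x=y$. The triangle inequality is exactly Minkowski's inequality: from $d(x_i,z_i)\le d(x_i,y_i)+d(y_i,z_i)$ coordinatewise and the triangle inequality of the weighted $\ell^p$-norm on $\R^n$ (with weights $1/n$) one obtains $d_p(x,z)\le d_p(x,y)+d_p(y,z)$. For completeness I would use that $d(x_i,y_i)\le n^{1/p}\,d_p(x,y)$ for each $i$, so any $d_p$-Cauchy sequence is Cauchy in every coordinate; completeness of $(M,d)$ then yields coordinatewise limits, and since $d_p(\cdot,\cdot)^p$ is just the average of the $p$-th powers of the coordinate distances, coordinatewise convergence is equivalent to $d_p$-convergence to the limit configuration.

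For the Lipschitz statement I would compare $d_p$ and $d_{p'}$ for $1\le p\le p'<\infty$ via the standard relations between normalized $\ell^p$-norms. The power-mean inequality (Jensen applied to the convex map $t\mapsto t^{p'/p}$) gives $d_p\le d_{p'}$, while the elementary bound $\|a\|_{p'}\le\|a\|_p$ rescales to $d_{p'}\le n^{1/p-1/p'}\,d_p$. Together these give $d_p\le d_{p'}\le n^{1/p-1/p'}\,d_p$, so the identity is bi-Lipschitz between any two of the metrics; in particular they all induce the product topology and have the same Cauchy sequences, so the completeness just proved is independent of $p$.

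The main point is that $d_p$ is a path metric, i.e. $d_p(x,y)$ equals the infimum of the $d_p$-lengths of curves from $x$ to $y$. The inequality $\ge$ is automatic, since in any metric space the length of a curve is at least the distance between its endpoints. For the reverse inequality I would exhibit a realizing curve. Because $(M,d)$ is a complete path metric space, \nmb!{8.2} supplies for each $i$ a minimizing geodesic $\gamma_i\colon[0,1]\to M$ from $x_i$ to $y_i$, parametrized proportionally to arclength so that $d(\gamma_i(s),\gamma_i(t))=|t-s|\,d(x_i,y_i)$ for all $s,t$. Setting $\gamma\coloneqq(\gamma_1,\dots,\gamma_n)$, one computes $d_p(\gamma(s),\gamma(t))=(t-s)\,d_p(x,y)$ for $s\le t$, so that every partition sum equals $d_p(x,y)$ and the $d_p$-length of $\gamma$ is exactly $d_p(x,y)$; thus $d_p$ is in fact a geodesic metric. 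If one prefers to avoid invoking geodesics, the same bound follows by choosing in each coordinate an almost-length-minimizing curve, reparametrizing it to constant speed on $[0,1]$, and estimating the $d_p$-length of the product curve from above by the weighted $\ell^p$-combination of the coordinate lengths (using monotonicity of the norm and the constant-speed bound $d(\gamma_i(t_{j-1}),\gamma_i(t_j))\le (t_j-t_{j-1})L_i$ on each subinterval), then letting the coordinate lengths decrease to $d(x_i,y_i)$.

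I expect the length estimate for the product curve to be the only genuine obstacle: combining curves that traverse their coordinates at different speeds can inflate the $d_p$-length, and it is precisely the constant-speed reparametrization — equivalently, the use of minimizing geodesics of fixed speed on each factor — that makes the per-segment bound line up so that the partition sums return $d_p(x,y)$ rather than something larger. Everything else is a routine application of Minkowski's inequality and the power-mean comparison.
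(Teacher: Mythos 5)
Your proposal is correct in substance but proves the path-metric property by a genuinely different route from the paper, and your primary version of that route overreaches the hypotheses. The paper never touches curves at all: it invokes the approximate-midpoint characterization of path metrics (\nmb!{8.3}), picks for each coordinate a point $c_i$ with $\max\{d(x_i,c_i),d(c_i,y_i)\}\le r\,d(x_i,y_i)$, and observes that applying the $L^p$ norm to these coordinatewise bounds yields the same midpoint inequality for $d_p$; together with completeness this gives the path-metric property in two lines. You instead build an explicit near-minimizing curve. The difficulty is that your first construction appeals to \nmb!{8.2} (Hopf--Rinow) to produce minimizing geodesics $\gamma_i$, but that theorem requires $M$ to be locally compact, which is not among the standing assumptions of Section~\nmb!{3} ($M$ is only assumed separable with a complete path metric); a complete path-metric space need not be geodesic, so the geodesic version of your argument proves a stronger conclusion under a stronger, unstated hypothesis.

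Your fallback --- take in each coordinate a curve of length at most $d(x_i,y_i)+\epsilon$, reparametrize it to constant speed $L_i$ on $[0,1]$, and bound each partition sum of the product curve by $(t_j-t_{j-1})\bigl(\frac1n\sum_i L_i^p\bigr)^{1/p}$, which telescopes to $\bigl(\frac1n\sum_i L_i^p\bigr)^{1/p}\le d_p(x,y)+\epsilon$ by Minkowski --- is valid in the stated generality and closes the gap, so the proof goes through provided you make that the main argument rather than an aside. The remaining items (metric axioms via Minkowski, completeness via coordinatewise Cauchy sequences, and the two-sided comparison $d_p\le d_{p'}\le n^{1/p-1/p'}d_p$) are all correct; the paper's Lipschitz bound instead compares every $d_p$ to $\max_i d(x_i,y_i)$, which is equivalent. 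What the paper's midpoint approach buys is brevity and independence from any existence or reparametrization facts about curves; what your curve construction buys is an explicit (almost-)minimizing curve realizing the distance, which is closer in spirit to the geodesic analysis carried out later in \nmb!{3.5}.
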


Note that $d_p(x,y)=\|d(x,y)\|_{L^p}$, where $\|\cdot\|_{L^p}$ denotes the $L^p$ norm of functions on the space $\{1,\dots,n\}$ with the uniform probability distribution.
The choice of normalizing constant $\frac1n$ is motivated by this probabilistic interpretation, as well as the large-sample limits in \nmb!{4.3} and \nmb!{4.8}.

\begin{proof}
Completeness of $(M^n,d_p)$ follows from completeness of $(M,d)$.
As $M$ is a path-metric space, there exists by \nmb!{8.3} for any $r>1/2$ and any $a,b \in M$ a point $c=c(a,b)\in M$ such that
\begin{align*}
\max\{d(a,c),d(c,b)\} \leq r d(a,b).
\end{align*}
Then obtains for the configuration $z\coloneqq c(x,y)$ by applying the $L^p$ norm that
\begin{align*}
\max\{d_{p}(x,z),d_{p}(z,y)\} 
\leq
r
d_{p}(x,y).
\end{align*}
This implies by \nmb!{8.3} that $d_p$ is a path metric on $M^n$.
The identity $M^n\to M^n$ is Lipschitz continuous under any of the metrics $d_p$ because 
\begin{equation*}
n^{-1/p} \max_i d(x_i,y_i) \le d_p(x,y) \le \max_i d(x_i,y_i),
\qquad
x,y \in M^n.\qedhere
\end{equation*}
\end{proof}

The complete path metric $d_p$ on the ordered sample space $M^n$ induces a canonical \emph{quotient metric} on the sample space $M^n/\S_n$. 
As the permutation group $\S_n$ acts isometrically on $(M^n,d_p)$, this quotient metric is again complete and admits a particularly simple description, as shown next. 

\begin{lemma}[Quotient metrics on sample spaces]
\nmb.{3.2}
The following quotient metric is a complete path metric on the sample space $M^n/\S_n$:
\begin{alignat*}{3}
\bar d_p(\bar x,\bar y) 
&= 
\min_{\proj(x) = \bar x, \proj(y)=\bar y}d_p(x,y) 
&&= 
\min_{\sigma\in \S_n}d_p(x,y_\sigma),
\end{alignat*}
where $\bar x,\bar y \in M^n/\S_n$ and $x,y \in M^n$ with $\proj(x)=\bar x$, $\proj(y)=\bar y$.
\end{lemma}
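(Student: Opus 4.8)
The plan is to treat $\bar d_p$ as the standard quotient metric attached to an action of a finite group by isometries, and to transfer each required property from $(M^n,d_p)$ — already known to be a complete path metric by \nmb!{3.1} — through the projection $\proj$. Four things need checking: that the two displayed expressions agree and yield a well-posed quantity, that $\bar d_p$ obeys the metric axioms, that it is complete, and that it is a path metric. Throughout, the finiteness of $\S_n$ is what makes the infima into attained minima, and this is used repeatedly.

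First I would record that since $\S_n$ acts isometrically, $d_p(x_\si,y_\ta)=d_p(x,y_{\ta\si\i})$ depends only on the product $\ta\si\i$; fixing one representative $x$ of $\bar x$ and letting $\si$ range over $\S_n$ therefore exhausts all pairs of representatives, which gives the equality of the two formulas and shows the value is independent of the chosen representatives. The minimum is attained because $\S_n$ is finite. Symmetry follows from $d_p(x,y_\si)=d_p(x_{\si\i},y)=d_p(y,x_{\si\i})$ together with the fact that $\si\mapsto\si\i$ is a bijection of $\S_n$. For definiteness, $\bar d_p(\bar x,\bar y)=0$ forces $d_p(x,y_\si)=0$ for some $\si$, hence $x=y_\si$ and $\bar x=\bar y$; here finiteness of the orbit of $y$ (a closed set) is exactly what rules out a vanishing distance between distinct orbits. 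The triangle inequality is obtained by exploiting representative-independence: choose $\si$ realizing $\bar d_p(\bar x,\bar y)=d_p(x,y_\si)$, then choose $\ta$ realizing $\bar d_p(\bar y,\bar z)=d_p(y_\si,z_\ta)$ using the representative $y_\si$ of $\bar y$, and chain
\[
\bar d_p(\bar x,\bar z)\le d_p(x,z_\ta)\le d_p(x,y_\si)+d_p(y_\si,z_\ta)=\bar d_p(\bar x,\bar y)+\bar d_p(\bar y,\bar z).
\]

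Completeness is the step I expect to require the most care, and it is where the attainment of the minimum over the finite group is used essentially. I would take a Cauchy sequence $(\bar x_k)$ in $M^n/\S_n$, pass to a subsequence with $\bar d_p(\bar x_k,\bar x_{k+1})<2^{-k}$, and build a coherent lift: starting from any representative $x_1$ of $\bar x_1$, choose inductively a representative $x_{k+1}$ of $\bar x_{k+1}$ attaining $d_p(x_k,x_{k+1})=\bar d_p(\bar x_k,\bar x_{k+1})$. The telescoping bound $d_p(x_k,x_{k+m})\le\sum_{j\ge k}2^{-j}$ shows $(x_k)$ is Cauchy in $(M^n,d_p)$, hence convergent to some $x$ by the completeness asserted in \nmb!{3.1}; then $\bar d_p(\bar x_k,\proj(x))\le d_p(x_k,x)\to 0$, so the subsequence converges, and a Cauchy sequence with a convergent subsequence converges. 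The only genuine subtlety is that the lift must be produced step by step from attained minimizers rather than from a global section, since $\proj$ admits no canonical continuous right inverse; once this is arranged the argument is routine.

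Finally, for the path-metric property I would inherit approximate midpoints from $(M^n,d_p)$ and invoke \nmb!{8.3}, exactly as in the proof of \nmb!{3.1}. Given $\bar x,\bar y$ and $r>1/2$, pick representatives $x,y$ with $d_p(x,y)=\bar d_p(\bar x,\bar y)$; since $d_p$ is a path metric there is $c\in M^n$ with $\max\{d_p(x,c),d_p(c,y)\}\le r\,d_p(x,y)$, and because $\proj$ is $1$-Lipschitz (take $\si=\Id$ in the minimum) the projected point $\bar c\coloneqq\proj(c)$ satisfies $\max\{\bar d_p(\bar x,\bar c),\bar d_p(\bar c,\bar y)\}\le r\,\bar d_p(\bar x,\bar y)$. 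Thus $\bar d_p$ admits approximate midpoints for every $r>1/2$, and together with the completeness just established, \nmb!{8.3} yields that $\bar d_p$ is a path metric.
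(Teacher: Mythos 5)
Your proof is correct, and for completeness it is essentially the paper's own argument: pass to a subsequence with summable consecutive distances, lift step by step using the attained minimizers over the finite group, and conclude from completeness of $(M^n,d_p)$. Where you genuinely diverge is the path-metric property. The paper disposes of it in one line by citing the general fact that the quotient of a length space by an isometric group action, with fibers equal to the orbits, is again a length space (Burago--Burago--Ivanov, Lemma~3.3.6); you instead give a self-contained derivation via the approximate-midpoint characterization \nmb!{8.3}, pushing a midpoint of optimally chosen representatives down through the $1$-Lipschitz projection and then invoking the completeness you have already established. This buys independence from the external reference and mirrors the structure of the proof of \nmb!{3.1}, at the cost of a slightly longer argument; note that your route does need completeness before \nmb!{8.3} can be applied, so the order in which you present the two properties matters, and you have it right. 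Your explicit verification of the metric axioms (well-posedness of the two displayed formulas, symmetry via $\si\mapsto\si\i$, definiteness from closedness of finite orbits, and the triangle inequality by re-choosing representatives) is material the paper leaves implicit in the phrase ``quotient metric,'' and it is carried out correctly.
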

 
\begin{proof}
The fibers of the projection are the orbits of the permutation group $\S_n$, which acts isometrically on $(M^n,d_p)$.
Therefore, the metric $\bar d_p$ is a path metric \cite[Lemma 3.3.6]{burago2001course}.
Moreover, this metric is complete: Given a Cauchy sequence in $M^n/\S_n$, take a subsequence such that the distances between subsequent points are summable. Lift the sequence to $M^n$ such that distances between subsequent points are preserved. Then the lift is a Cauchy sequence, which converges thanks to the completeness of $M^n$.
\end{proof}

Recall that a subset of a metric space is called \emph{convex} if the restriction of the metric to this subset is a finite complete path metric \cite[Definition~3.6.5]{burago2001course}.
If the surrounding space carries a complete path metric, then this is equivalent to the subset being \emph{totally geodesic}, i.e., any two points in the subset can be connected by a minimizing geodesic in the subset.  

\begin{lemma}[Convexity of orbit-type strata]
\nmb.{3.3}
Connected components of orbit-type strata in the configuration space $(M^n,d_p)$ have convex closures. Moreover, orbit-type strata in the sample space $(M^n/\S_n,\bar d_p)$ have convex closures.
\end{lemma}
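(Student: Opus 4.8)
The plan is to handle the two assertions separately: the configuration-space statement reduces to the convexity of a weighted product, and I would then try to push this across the quotient~$\proj$.

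First I would fix a connected component $S$ of $(M^n)_{\partition{k}}$. By \nmb!{2.8} it is cut out by a fixed partition of $\{1,\dots,n\}$ into blocks $B_1,\dots,B_q$ with $|B_j|=k_j$, and its closure is the \emph{generalized diagonal}
\begin{equation*}
D \coloneqq \{x\in M^n : x_i=x_{i'} \text{ whenever } i,i' \text{ lie in the same block}\}.
\end{equation*}
The bijection $\Phi\colon M^q\to D$ sending $(y_1,\dots,y_q)$ to the configuration that equals $y_j$ on $B_j$ satisfies
\begin{equation*}
d_p(\Phi(y),\Phi(y'))^p = \frac1n\sum_{j=1}^q k_j\, d(y_j,y'_j)^p = \sum_{j=1}^q w_j\, d(y_j,y'_j)^p,\qquad w_j\coloneqq k_j/n.
\end{equation*}
Thus $\Phi$ is an isometry from $M^q$ with the weighted $L^p$ metric onto $D$ with the subspace metric. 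The argument of \nmb!{3.1} uses nothing but positivity of the weights, so this weighted metric is again a complete path metric; hence the subspace metric on $D$ is a finite complete path metric and $D=\overline S$ is convex. This settles the first assertion.

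For the sample space, let $C\coloneqq\overline{(M^n/\S_n)_{\partition{k}}}$. Since $\S_n$ is finite, $\proj$ is closed, and since $\S_n$ acts transitively on the generalized diagonals of a fixed orbit type, \nmb!{2.10} gives $C=\proj(D)$ for the single diagonal $D$ above. I would then invoke the standard quotient recipe: given $\bar x,\bar y\in C$, lift them to $x,y\in M^n$ with $d_p(x,y)=\bar d_p(\bar x,\bar y)$ via an optimal permutation, join $x$ to $y$ by a minimizing geodesic $\gamma$ in $(M^n,d_p)$, and project. The image $\proj\circ\gamma$ has length at most that of $\gamma$ and joins $\bar x$ to $\bar y$, so by \nmb!{3.2} it is a minimizing geodesic in $(M^n/\S_n,\bar d_p)$; it remains only to keep it inside $C$. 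If the optimal lifts $x,y$ can be chosen in one and the same diagonal $D$, then the first part forces $\gamma$ to stay in $D$, so $\proj\circ\gamma$ stays in $\proj(D)=C$ and $C$ is convex.

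The hard part is precisely this last point: realizing $\bar d_p(\bar x,\bar y)$ by lifts lying in a common diagonal. Writing $\bar d_p$ as an optimal assignment between the two weighted point sets, the minimizing permutation need not respect the block structure — it may send coincident coordinates of $x$ to distinct coordinates of $y$, so that the optimal lifts fall into different diagonals. In that case the connecting configuration-space geodesic passes through strata with strictly more distinct points and its projection leaves $C$, while lifting both endpoints into a common $D$ instead only yields an upper bound for the intrinsic metric of $C$, not the equality with $\bar d_p$ that convexity demands. Thus the crux is a combinatorial transport statement: one must show that $\bar d_p(\bar x,\bar y)$ is always attained by a block-respecting matching, equivalently that some $\bar d_p$-minimizing geodesic can be kept inside the closure. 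It is here that the interaction between the optimal-matching combinatorics and the stratification has to be analyzed carefully, and I expect this to be the main obstacle in completing the proof.
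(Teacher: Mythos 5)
Your argument for the configuration-space assertion is correct and is essentially the paper's own: identify the closure of a component with a (weighted) $L^p$ product of copies of $M$, note that the midpoint construction of \nmb!{3.1} is insensitive to the weights, and conclude. You are in fact more precise than the paper here, since the identification is an isometry onto $M^q$ with the weighted metric $(\sum_j (k_j/n)\,d(y_j,y_j')^p)^{1/p}$, which is a single ``normalizing constant'' only when all $k_j$ coincide.

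The obstruction you isolate for the quotient assertion is not a technicality you failed to overcome; it is real, and with the paper's stated notion of convexity (the restriction of $\bar d_p$ is a path metric, equivalently total geodesy) the second assertion of the lemma actually fails. Take $M=\mathbb R$, $n=3$, $p=2$, $\bar x=\proj(0,0,1)$ and $\bar y=\proj(0,1,1)$, both of orbit type $(2\ge 1)$. The optimal matching sends $0\mapsto 0$, $0\mapsto 1$, $1\mapsto 1$, so $\bar d_2(\bar x,\bar y)=1/\sqrt3$, and it does not respect the blocks. The closure of the $(2\ge1)$-stratum is the set of samples $\proj(a,a,b)$; along any continuous curve in this set from $\bar x$ to $\bar y$ the doubled value must travel from $0$ to $1$ while the simple value travels from $1$ to $0$, so by the intermediate value theorem the curve meets the diagonal $\proj(c,c,c)$, and hence its length is at least
\begin{equation*}
\min_{c\in\mathbb R}\Big[\bar d_2\big(\bar x,\proj(c,c,c)\big)+\bar d_2\big(\proj(c,c,c),\bar y\big)\Big]=1>\tfrac{1}{\sqrt3}.
\end{equation*}
Thus the restriction of $\bar d_2$ to this closure is not a path metric, and no minimizing geodesic between $\bar x$ and $\bar y$ stays inside it. The paper's proof glosses over exactly this point: it asserts that $\proj$ restricts to an isometry from a component $K$ onto $(M^n/\S_n)_{\partition{k}}$, which is precisely the block-respecting--matching claim you doubted, and the same configurations refute it (for $M=\mathbb R^2$ take $x=(P,P,Q)$, $y=(Q,Q,P)$ in one component: $d_2(x,y)=d(P,Q)$ while $\bar d_2(\proj x,\proj y)=d(P,Q)/\sqrt3$). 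So do not search for the missing combinatorial transport lemma --- it is false. What does survive is the weaker statement that the closure of the quotient stratum, equipped with its \emph{induced length} metric, is a finite complete path-metric space (it is the image of $\bar K$ under the continuous bijection $\proj|_{\bar K}$), but that is not convexity in the sense the paper defines and later uses in the discussion following \nmb!{3.7}.

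One smaller slip in your sketch: even when optimal lifts do lie in a common diagonal $D$, convexity of $D$ only provides \emph{some} minimizing geodesic inside $D$; it does not force an arbitrary minimizing geodesic $\gamma$ between the lifts to stay in $D$ (Example~\nmb!{3.4} is exactly a counterexample). One must choose $\gamma$ inside $D$ using the first part. This does not affect your diagnosis of where the argument breaks down.
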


\begin{proof}
If $\partition{k}\coloneqq(k_1\ge\dots\ge k_q)$ is a partition of $n$, then by \nmb!{2.8} each connected component $K$ of $(M^n)_{\partition{k}}$ is homeomorphic to the open subset of all pairwise distinct points in $M^q$.
This homeomorphism is even an isometry (up to a normalizing constant) under the $d_p$ metrics on $M^n$ and $M^q$, respectively.
Thus, the closure $\bar K$ is homeomorphic to $M^q$. 
As $(M^q,d_p)$ is a complete path-metric space by \nmb!{3.1}, it follows that $\bar K$ is a convex subset of $(M^n,d_p)$. 
The projection $\proj\colon M^n\to M^n/\S_n$ restricts to an isometry $\proj\colon K\to (M^n/\S_n)_{\partition{k}}$. It follows that $\bar d_p$ restricts to a complete path metric on the closure of the stratum $(M^n/\S_n)_{\partition{k}}$. Therefore, by definition, the closure of the stratum $(M^n/\S_n)_{\partition{k}}$ is a convex subset of $(M^n/\S_n,\bar d_p)$.
\end{proof}

\begin{example}[Lack of strict convexity]
\nmb.{3.4}
The closure of a connected component of an orbit stratum in $M^n$ need not be \emph{strictly convex} in the sense that \emph{each} minimal geodesic connecting two points in this stratum lies also in the stratum.
\end{example}

\begin{proof}
Let $c_1$ and $c_2$ be two distinct meridian geodesics in the 2-sphere $M=S^2$, which connect the north pole $N$ to the south pole $S$. 
Then $c=(c_1,c_2)$ is a minimizing geodesic between the points $(N,N)$ and $(S,S)$ in $M^2$. 
These points belong to the closed and connected orbit stratum $(M^2)_{(2)}$, but the geodesic $c$ does not lie in $(M^2)_{(2)}$.
\end{proof}

\begin{theorem}[Geodesics between configurations]
\nmb.{3.5}
A continuous curve $c\colon[0,1]\to M^n$ is a constant-speed minimizing geodesic in $(M^n,d_p)$ with $p\in(1,\infty)$ if and only if its component curves $c_1,\dots,c_n\colon[0,1]\to M$ are constant-speed minimal geodesics in $(M,d)$. 
For $p=1$ a similar statement holds without the requirement of constant speed.
\end{theorem}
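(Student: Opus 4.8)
The plan is to reduce everything to a coordinatewise statement by combining the triangle inequality in each factor with Minkowski's inequality on $\mathbb R^n$ equipped with the weights $\tfrac1n$, and to exploit the strict convexity of $r\mapsto r^p$ available when $p\in(1,\infty)$. Throughout I would write $x=c(0)$, $y=c(1)$, $D_i\coloneqq d(x_i,y_i)$ and $D\coloneqq d_p(x,y)=\big(\tfrac1n\sum_i D_i^p\big)^{1/p}$, and recall that a curve in a complete path-metric space is a constant-speed minimizing geodesic if and only if $d(c(s),c(t))=|t-s|\,d(c(0),c(1))$ for all $s,t$.

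The easy direction, valid for every $p\in[1,\infty)$, is a direct computation: if each $c_i$ satisfies $d(c_i(s),c_i(t))=|t-s|D_i$, then $d_p(c(s),c(t))=\big(\tfrac1n\sum_i(|t-s|D_i)^p\big)^{1/p}=|t-s|D$, which by the characterization above shows that $c$ is a constant-speed minimizing geodesic.

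For the converse with $p\in(1,\infty)$ I would start from the identity $d_p(c(s),c(t))=|t-s|D$ and fix $0\le s\le t\le u\le 1$. Combining the factorwise triangle inequality $d(c_i(s),c_i(u))\le d(c_i(s),c_i(t))+d(c_i(t),c_i(u))$, monotonicity of $r\mapsto r^p$, and Minkowski's inequality produces a chain of inequalities whose two ends both equal $|u-s|D$; hence every inequality in the chain is an equality. Strict monotonicity of $r\mapsto r^p$ forces the factorwise triangle inequalities to be equalities, so each $c_i$ is a (reparametrized) minimizing geodesic from $x_i$ to $y_i$, and in particular $d(c_i(0),c_i(t))+d(c_i(t),c_i(1))=D_i$. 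The equality case of Minkowski's inequality for $p\in(1,\infty)$ then forces the vectors $\big(d(c_i(s),c_i(t))\big)_i$ and $\big(d(c_i(t),c_i(u))\big)_i$ to be proportional; taking $s=0$ and $u=1$, this proportionality together with the additivity just obtained yields $d(c_i(0),c_i(t))=\phi(t)D_i$ for a scalar $\phi(t)$ independent of $i$. Substituting into $d_p(c(0),c(t))=tD$ pins down $\phi(t)=t$, and additivity upgrades this to $d(c_i(s),c_i(t))=|t-s|D_i$ for all $s,t$, so each $c_i$ is constant-speed minimizing.

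The main obstacle is precisely this last extraction: converting the pointwise-in-$i$ proportionality coming from the Minkowski equality case into a single time-profile $\phi(t)=t$ shared by all coordinates, while handling the degenerate coordinates with $D_i=0$ (which the additivity forces to be constant) and the trivial global case $D=0$. The case $p=1$ is then separate and genuinely simpler, because Minkowski's inequality carries no proportionality information there; instead I would use that the $L^1$ length splits additively, $L_{d_1}(c)=\tfrac1n\sum_i L_d(c_i)$, so that minimality of $c$, namely $L_{d_1}(c)=d_1(x,y)$, is equivalent to $L_d(c_i)=D_i$ for every $i$, i.e.\ to each $c_i$ being minimizing, with no constraint whatsoever on the parametrization.
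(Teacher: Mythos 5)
Your proposal is correct, but it takes a genuinely different route from the paper. The paper's proof runs through the Lagrangian energy--action formalism of the appendix (Lemma~8.5): the energy $E_p^{s,t}(x,y)=d_p(x,y)^p/|s-t|^{p-1}$ decomposes as the average of the coordinatewise energies $E^{s,t}(x_i,y_i)$, and a curve is a constant-speed minimizing geodesic precisely when its energy defect $E_p^{u,v}+E_p^{v,w}-E_p^{u,w}$ vanishes along the curve. Since each coordinatewise defect is non-negative (this is where the H\"older/convexity input hides, inside Lemma~8.5), vanishing of the sum forces vanishing of every summand, and the same characterization applied to $(M,d)$ finishes the proof in one stroke --- both ``minimizing'' and ``constant speed'' are encoded in a single additive quantity, so there is no need to extract a common time-profile. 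Your argument instead works directly with the metric characterization $d(c(s),c(t))=|t-s|\,d(c(0),c(1))$ and the equality case of Minkowski's inequality; this is more elementary and self-contained, but it is also where the real work sits: you must convert the coordinatewise proportionality into the single profile $\phi(t)=t$ and treat the degenerate coordinates separately, exactly the obstacle you identify. What the paper's formalism buys in exchange is reusability: the identical argument transfers verbatim to the infinite-configuration space $L^p((0,1),M)$ in Lemma~4.4, where the coordinate index becomes $\omega\in(0,1)$ and a Minkowski equality-case extraction of a common $\phi$ would require nontrivial measure-theoretic care. Your treatment of $p=1$ via the additive splitting $\ell_{d_1}(c)=\frac1n\sum_i\ell_d(c_i)$ is essentially the paper's own argument with the pair $(d_1,\ell_1)$, and is fine.
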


\begin{proof}
For $p>1$, we associate Lagrangian energy--action pair $(E,A)$ and $(E_p,A_p)$ to $(M,d)$ and $(M^n,d_p)$, respectively, as in \nmb!{8.5}: 
\begin{align*}
E^{s,t}(x_i,y_i)
&\coloneqq
\frac{d(x_i,y_i)^p}{|s-t|^{p-1}},
&
A^{s,t}(c_i) 
&\coloneqq 
\sup_{\substack{n\in\mathbb N\\s=u_0\leq \dots\leq u_n=t}}
\sum_{m=0}^{n-1}
\frac{d(c_i(u_m),c_i(u_{m+1}))^p}{|u_m-u_{m+1}|^{p-1}},
\\
E_p^{s,t}(x,y)
&\coloneqq
\frac{d_p(x,y)^p}{|s-t|^{p-1}},
&
A_p^{s,t}(c) 
&\coloneqq 
\sup_{\substack{n\in\mathbb N\\s=u_0\leq \dots\leq u_n=t}}
\sum_{m=0}^{n-1}
\frac{d_p(c(u_m),c(u_{m+1}))^p}{|u_m-u_{m+1}|^{p-1}},
\end{align*}
for any $i \in \{1,\dots,n\}$, $0\le s\le t\le 1$, $x,y \in M^n$, and continuous curve $c\colon [0,1]\to M^n$.
By \nmb!{8.5}, the given curve $c$ is a length-minimizing constant-speed geodesic in $(M^n,d_p)$ if and only if it satisfies for all $u\leq v\leq w$ in $[0,1]$ that
\begin{equation*}
E_p^{u,v}(c(u),c(v))+E_p^{v,w}(c(v),c(w)-E_p^{u,w}(c(u),c(w))=0.
\end{equation*}
Equivalently, by the definitions of $E$ and $E_p$, 
\begin{equation*}
\frac1n\sum_{i=1}^n \Big(E^{u,v}\big(c_i(u),c_i(v)\big)+ E^{v,w}\big(c_i(v),c_i(w)\big)-E^{u,w}\big(c_i(u),c_i(w)\big)\Big)=0.
\end{equation*}
As all summands are non-negative by the triangle inequality, they vanish.
Equivalently, by Lemma~\nmb!{8.5}, all components $c_i\colon[0,1]\to M$ are constant-speed minimizing geodesics.

For $p=1$, one uses a similar argument for the energy-action pairs $(d,\ell)$ and $(d_1,\ell_1)$, where $\ell$ is the length functional in $(M,d)$, and $\ell_1$ is the length functional in $(M^n,d_1)$. 
However, in this case, a curve is minimizing for these energy-action pairs if and only if it is a geodesic, regardless of whether it has constant speed or not.
\end{proof}

\begin{theorem}[Geodesics between samples]
\nmb.{3.6}
Let $M$ be a connected complete locally compact path-metric space. Then
any minimizing geodesic in the sample space $(M^n/\S_n,\bar d_p)$ is the projection of a minimizing geodesic in the configuration space $(M^n,d_p)$, which we call its {\sl horizontal lift}. 
\end{theorem}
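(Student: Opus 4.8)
The plan is to construct the horizontal lift as a limit of piecewise-geodesic lifts taken along a \emph{nested} sequence of partitions. First I would reduce to the case that $\bar c\colon[0,1]\to M^n/\S_n$ has constant speed: reparametrisation commutes with the projection $\proj$, so a constant-speed lift can afterwards be reparametrised to match the original $\bar c$ (this also covers the non-constant-speed $p=1$ situation of \nmb!{3.5}). Next I record the two structural ingredients. Since $M$ is complete, locally compact and a path-metric space, it is proper and geodesic (Hopf--Rinow; cf. \nmb!{8.2}); by \nmb!{3.1} the same holds for $(M^n,d_p)$, so any two configurations are joined by a minimizing geodesic and closed bounded sets are compact. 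Moreover $\proj$ is distance non-increasing, $\bar d_p(\proj(a),\proj(b))\le d_p(a,b)$, and since $\S_n$ is finite the minimum defining $\bar d_p$ in \nmb!{3.2} is attained. Write $L\coloneqq\bar d_p(\bar c(0),\bar c(1))$ for the length of $\bar c$.

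For each $m$ put $t_k\coloneqq k/2^m$ and fix a lift $x\in M^n$ of $\bar c(0)$. Choosing representatives successively, I select lifts $x_0^m=x,\dots,x_{2^m}^m$ with $\proj(x_k^m)=\bar c(t_k)$ and $d_p(x_k^m,x_{k+1}^m)=\bar d_p(\bar c(t_k),\bar c(t_{k+1}))$, which is possible as the finite minimum in \nmb!{3.2} is attained. Joining consecutive lifts by minimizing geodesics in $(M^n,d_p)$ and parametrising each segment proportionally to arclength yields a constant-speed curve $c_m\colon[0,1]\to M^n$ through the $x_k^m$. The key point is that $c_m$ is already minimizing: because $\bar c$ is a constant-speed minimizing geodesic, the consecutive distances telescope, $\sum_k\bar d_p(\bar c(t_k),\bar c(t_{k+1}))=L$, so $c_m$ has length $L$; since $\proj$ is distance non-increasing, $d_p(x_0^m,x_{2^m}^m)\ge\bar d_p(\bar c(0),\bar c(1))=L$, whence $d_p(x_0^m,x_{2^m}^m)=L$ equals the length of $c_m$.

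All $c_m$ start at $x$ and are $L$-Lipschitz, hence lie in the compact ball $\bar B_{d_p}(x,L)$; by Arzel\`a--Ascoli a subsequence $c_{m_j}$ converges uniformly to a curve $c$. As $c$ is $L$-Lipschitz with $d_p(c(0),c(1))=\lim_j d_p(x,x_{2^{m_j}}^{m_j})=L$, it is again a constant-speed minimizing geodesic from $x$ of length $L$. It remains to identify $\proj\circ c$ with $\bar c$. For a fixed dyadic $t=k/2^{m_0}$ and every $m\ge m_0$, $t$ is a breakpoint of $c_m$, so $\proj(c_m(t))=\bar c(t)$; letting $m=m_j\to\infty$ and using continuity of $\proj$ gives $\proj(c(t))=\bar c(t)$. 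As the dyadic rationals are dense and both curves are continuous, $\proj\circ c=\bar c$ on all of $[0,1]$, so $c$ is the desired horizontal lift (and by \nmb!{3.5} its components are minimizing geodesics in $M$).

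The main obstacle is exactly this last identification. It is not enough to produce \emph{some} minimizing geodesic in $M^n$ with the correct endpoints whose projection is a minimizer between $\bar c(0)$ and $\bar c(1)$: minimizing geodesics in sample space need not be unique (the failure of strict convexity in \nmb!{3.4} is precisely such a phenomenon), so a priori the projection of a naive lift could be a different minimizer than $\bar c$. Using a nested dyadic sequence of partitions forces $\proj\circ c$ to agree with $\bar c$ on a dense set before the limit is taken, which is what pins the lift to $\bar c$ itself; properness, in turn, is what makes available the compactness argument extracting $c$.
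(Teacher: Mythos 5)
Your proof is correct and follows essentially the same route as the paper: piecewise-geodesic lifts chosen segment by segment to realize the quotient distance to the next fiber, compactness of closed balls via Hopf--Rinow, Arzel\`a--Ascoli to extract a limit curve, and identification of its projection with $\bar c$ on a dense set of breakpoints. Your nested dyadic partitions (versus the paper's uniform $m$-partitions, which need a small approximation of an arbitrary $t$ by breakpoints $k/m_j$) and your explicit verification that the limit is minimizing are minor refinements of the same argument.
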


\begin{proof}
Let $\bar c\in C([0,1],M^n/\S_n)$ be a constant-speed minimizing geodesic, 
and let $x\in \proj\i(\bar c(0))$. 
For each $m\in \mathbb N$ we construct a curve $c_m\in C([0,1],M^n)$ as follows:
Set $c_m(0)\coloneqq x$, and then
inductively, for each $k \in \{0,\dots,m-1\}$, choose $c_m|{[(k/m,(k+1)/m]}$ as a constant-speed minimizing geodesic from $c_m(k/m)$ to the orbit $\proj\i(\bar c((k+1)/m))$, until $c_m$ reaches the orbit $\proj\i(\bar c(1))$.
The family $\{c_m:m\in\mathbb N\}$ is equicontinuous because the curves $c_m$ have constant speed. 
Moreover, all curves $c_m$ take values in the compact ball of radius $\bar d_p(\bar c(0),\bar c(1))$ around $x$, which is compact by the Hopf--Rinov theorem \nmb!{8.2}.
Thus, by the Arzel\`a--Ascoli theorem \cite[Theorem~43.15]{willard2004general}, the set $\{c_m:m\in\mathbb N\}$ is pre-compact in the topology of uniform convergence and therefore has a cluster point $c\in C([0,1],M^n)$. 
The cluster point satisfies $\proj\circ c=\bar c$ because the curves $c_m$ satisfy $\proj(c_m(k/m))=\bar c(k/m)$ for all $0\le k\le m$. 
By construction, $c$ is a minimizing geodesic.    
\end{proof}

We next consider the special case where $M$ is a finite-dimensional manifold with Riemannian metric $g$ and complete geodesic distance $d$.
Then $\frac1n(g\oplus\dots\oplus g)$ is an $\S_n$-invariant Riemannian metric on $M^n$, whose geodesic distance is the metric $d_2$ on $M^n$.
The quotient space $M^n/\S_n$ carries a rich differential-geometric structure, which is described in detail in \cite[Sections 29 and 30]{Michor08}.
In particular, one obtains by differential-geometric arguments that a minimal geodesic segment is more regular at interior points than at the end points. 
This is formalized in the following theorem.

\begin{theorem}[Interior regularity of Riemannian geodesics {\cite[3.5 and 3.4]{Michor03orbit}}]
\nmb.{3.7}
Let $M$ be a finite-dimensional manifold with complete Riemannian metric $g$, and let $M^n$ be the product manifold with the product Riemannian metric $\frac1n(g\oplus\dots\oplus g)$. 
Then, for any lift $c\colon[0,1]\to M^n$ of a minimal geodesic segment in $M^n/\S_n$, the isotropy group $(\S_n)_{c(t)}$ of an interior point of $c$ is contained in the isotropy groups $(\S_n)_{c(0)}$, $(\S_n)_{c(1)}$ of the end points.
\end{theorem}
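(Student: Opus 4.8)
The plan is to exploit the fact that every permutation $\sigma$ fixing an \emph{interior} configuration $c(t_0)$ acts isometrically on $\big(M^n,\tfrac1n(g\oplus\dots\oplus g)\big)$ and must therefore fix the entire geodesic, whence fixing the endpoints is immediate. By \nmb!{3.6} the lift $c$ is a constant-speed minimizing geodesic in $M^n$ with $\proj\circ c=\bar c$ and $L(c)=\bar d_2(\bar c(0),\bar c(1))$. Since $\sigma$ permutes the factors isometrically, the curve $\tilde c(s)\coloneqq\sigma\cdot c(s)$ is again a constant-speed minimizing geodesic with the same projection $\proj\circ\tilde c=\proj\circ c$. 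The hypothesis $\sigma\in(\S_n)_{c(t_0)}$ for some $t_0\in(0,1)$ says precisely that $c$ and $\tilde c$ meet at the interior time $t_0$, i.e.\ $\tilde c(t_0)=c(t_0)$.

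The key step I would carry out is a concatenation argument. I glue the first half of $c$ to the second half of $\tilde c$, setting $\gamma(s)\coloneqq c(s)$ for $s\le t_0$ and $\gamma(s)\coloneqq\tilde c(s)$ for $s\ge t_0$. This curve is continuous because $c(t_0)=\tilde c(t_0)$, it has constant speed equal to that of $c$ (as $\sigma$ is an isometry), and it projects to the same geodesic $\bar c$. Hence $L(\gamma)=L(c)=\bar d_2(\bar c(0),\bar c(1))$. Comparing with the quotient-metric estimate $\bar d_2(\proj x,\proj y)\le d_2(x,y)$ from \nmb!{3.2}, applied to the endpoints $\gamma(0)=c(0)$ and $\gamma(1)=\sigma\cdot c(1)$, one finds $d_2(c(0),\sigma\cdot c(1))\ge\bar d_2(\bar c(0),\bar c(1))=L(\gamma)\ge d_2(\gamma(0),\gamma(1))$, forcing equality. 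Thus $\gamma$ realizes the distance between its endpoints and is a minimizing geodesic in the Riemannian manifold $M^n$.

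From here the conclusion should be quick. A length-minimizing curve in a Riemannian manifold has no corners, so $\gamma$ is smooth at $t_0$; matching its left and right velocities yields $\dot c(t_0)=T_{c(t_0)}\sigma\cdot\dot c(t_0)$, i.e.\ the differential of $\sigma$ fixes the velocity as well as the point. Since $c$ and $\tilde c$ are geodesics sharing both position and velocity at $t_0$, uniqueness of geodesics gives $\tilde c=c$ on all of $[0,1]$; evaluating at the endpoints produces $\sigma\cdot c(0)=c(0)$ and $\sigma\cdot c(1)=c(1)$, which is exactly $\sigma\in(\S_n)_{c(0)}\cap(\S_n)_{c(1)}$.

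The hard part will be the middle step: certifying that the broken curve $\gamma$ is genuinely minimizing rather than merely a same-length competitor, and then invoking the no-corner regularity of minimizers to upgrade this to velocity agreement. This is where interiority of $t_0$ is indispensable, as the first-variation (corner-rounding) argument requires nondegenerate geodesic pieces on both sides of $t_0$; at an endpoint the construction degenerates, which is precisely why the isotropy is allowed to strictly grow there. I would also be careful to use the $\S_n$-action both on points (to fix $c(t_0)$) and on tangent vectors (via $T\sigma$), and to note that completeness supplies the Hopf--Rinow machinery \nmb!{8.2} underpinning the existence of the minimizing lift in \nmb!{3.6}.
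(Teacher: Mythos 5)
Your argument is correct, and it is worth noting that the paper does not actually prove this theorem in-house: the ``proof'' consists of the citation \cite[3.4 and 3.5]{Michor03orbit}, where the statement is derived for isometric actions of compact Lie groups using slices and the general theory of orbit spaces. What you have written is the elementary, self-contained specialization of that machinery to the finite group $\S_n$, and every step checks out: you take $c$ to be the minimizing-geodesic (horizontal) lift provided by \nmb!{3.6} --- which is indeed the intended reading of ``lift'' in the statement, rather than an arbitrary continuous preimage of $\bar c$ --- and for $\sigma\in(\S_n)_{c(t_0)}$ you form the broken curve $\gamma$ obtained by switching from $c$ to $\sigma\cdot c$ at $t_0$. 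Your chain $d_2(\gamma(0),\gamma(1))\ge\bar d_2(\bar c(0),\bar c(1))=L(c)=L(\gamma)\ge d_2(\gamma(0),\gamma(1))$ correctly certifies that $\gamma$ is a minimizer, the no-corner regularity of constant-speed minimizers then forces $T\sigma$ to fix $\dot c(t_0)$, and uniqueness of geodesics with prescribed initial data gives $\sigma\cdot c=c$, hence $\sigma\in(\S_n)_{c(0)}\cap(\S_n)_{c(1)}$. You also correctly identify why interiority of $t_0$ is essential (the corner-smoothing argument needs nondegenerate arcs on both sides). In fact your proof yields slightly more than the stated containment: $\sigma$ fixes the lift pointwise, so the isotropy group is \emph{constant} on the open interval $(0,1)$, which is the form in which the result appears in the cited reference. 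The trade-off between the two routes is the usual one: the citation buys generality (arbitrary compact isometry groups, where one genuinely needs the slice theorem), while your broken-geodesic first-variation argument is shorter, uses only standard Riemannian facts, and would have made the paper self-contained for the case actually needed here.
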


Thus, for any subgroup $H\leq\S_n$, the set $(M^n/\S_n)_{\le(H)}$ of orbits with orbit type smaller or equal to $(H)$ is a strictly convex subset of $M^n/\S_n$.
This means that any minimal geodesic segment between two points in $(M^n/\S_n)_{\le(H)}$ lies entirely in $(M^n/\S_n)_{\le(H)}$.
In particular, the regular orbit-type stratum in $M^n/\S_n$ is a strictly convex open dense subset.
Recall for comparison that $(M^n/\S_n)_{\ge(H)}$ is convex by \nmb!{3.3} but may not be strictly convex by \nmb!{3.4}.

\begin{example}[Lack of interior regularity]
\nmb.{3.8}
The assertion of \nmb!{3.7} is wrong for non-Riemannian complete path-metric spaces.
\end{example}

\begin{proof}
Let $(M,d)$ be an open book space, for example the 3-spider, one of the simplest tree spaces \cite{BHV01}.
$$
{\hbox{
\psfrag x {$x$}
\psfrag y {$y$}
\psfrag z {$z$}
\psfrag 0 0
\includegraphics*[height=1.9cm]{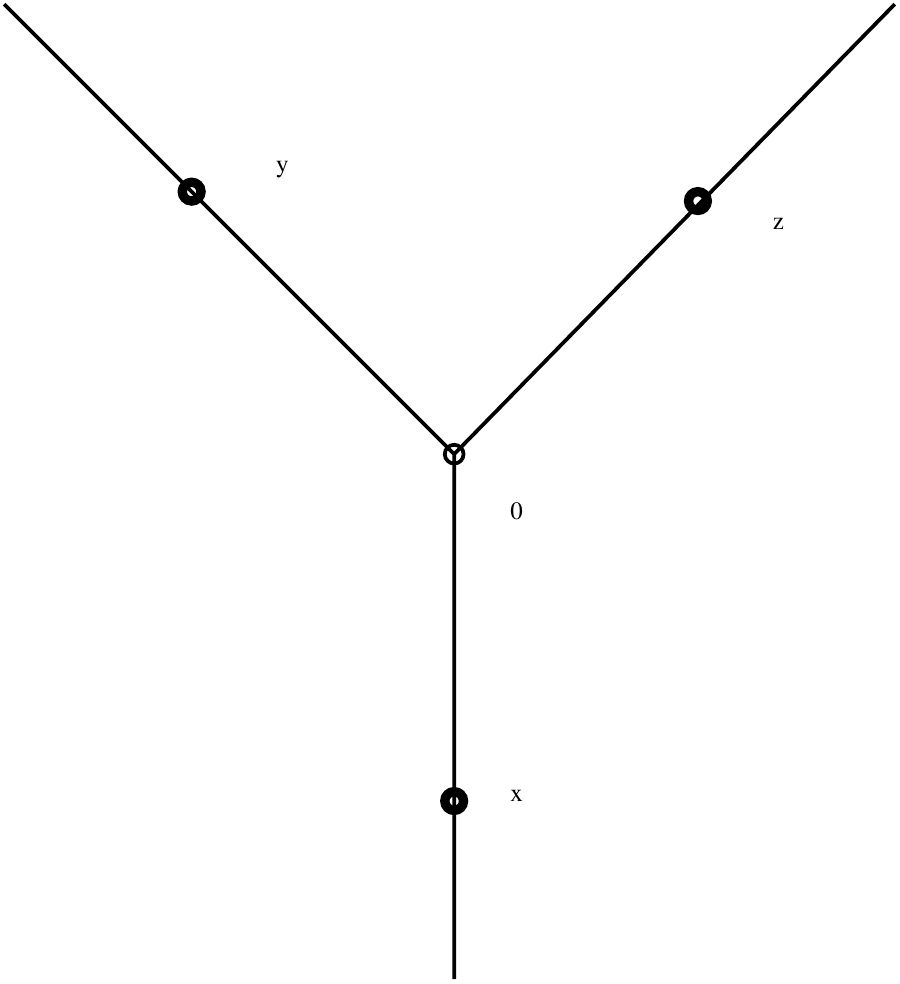}}}\qquad\qquad  
{\begin{minipage}[b][2.2cm][s]{0.7\textwidth}
We choose 3 points $x,y,z$ on the 3 lines with the same distance from the center $0$.
Let $c\colon[0,2]\to M^2$ be the minimal geodesic from $c(0)=(x,y)$ via $c(1)=(0,0)$ to $c(2)=(z,z)$.
Then the isotropy group $\S_2$ of $c(1)$ and $c(2)$ is not contained in the trivial isotropy group of $c(0)=(x,y)$. 
\end{minipage}}
$$
See the related discussion in \cite[Chapter 8]{villani2008optimal}. Note that the `curvature' of the spider at 0 is $-\infty$.
\end{proof}

\section{Infinite configuration and sample spaces}
\nmb0{4}

This section exhibits configuration spaces as spaces of random variables and sample spaces as spaces of probability distributions. 
Moreover, it identifies large-sample limits of these spaces.
Throughout this section, $(M,d)$ is a separable connected complete path-metric space,
and $p \in [1,\infty)$.

\begin{definition}[Random variables]
\nmb.{4.1}
For any complete probability space $(\Omega,\mathcal F,\mathbb P)$, we write $L^p(\Omega,M)$ for the space of all measurable functions $x\colon \Omega\to M$ which satisfy for one (or equivalently, all) $o \in M$ that $\|d(x,o)\|_{L^p(\Omega)}<\infty$.
We endow the space $L^p(\Omega,M)$ with the metric 
\begin{align*}
d_p(x,y) \coloneqq \|d(x,y)\|_{L^p(\Omega)},
\qquad
x,y \in L^p(\Omega,M).
\end{align*} 
\end{definition}

\begin{lemma}[Configurations as random variables]
\nmb.{4.2}
For any $n \in \mathbb N$, the configuration space $(M^n,d_p)$ is isometric to $(L^p(\{1,\dots,n\},M),d_p)$, where $\{1,\dots,n\}$ is seen as a probability space with the uniform distribution.
\end{lemma}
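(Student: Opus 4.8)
The plan is to exhibit an explicit isometric bijection and then verify that the two metrics agree by unwinding the definitions. The statement is essentially a definitional identity, so there is no substantive obstacle; the only point requiring a word of care is that the measurability and integrability requirements in the definition \nmb!{4.1} of $L^p(\{1,\dots,n\},M)$ are vacuous when the underlying probability space is finite.

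First I would set up the natural identification. A function $x\colon\{1,\dots,n\}\to M$ is nothing but the $n$-tuple $(x(1),\dots,x(n))$, so the map $\Phi\colon M^n\to L^p(\{1,\dots,n\},M)$ sending $(x_1,\dots,x_n)$ to the function $i\mapsto x_i$ is a bijection onto the set of all functions $\{1,\dots,n\}\to M$. I would then check that $\Phi$ genuinely lands in $L^p(\{1,\dots,n\},M)$ and that this target space is exactly that set of all functions. Since $\{1,\dots,n\}$ carries the discrete $\sigma$-algebra, every function out of it is measurable, so the measurability requirement of \nmb!{4.1} holds automatically. Likewise, for any base point $o\in M$ the quantity $\|d(x,o)\|_{L^p}$ equals $\big(\frac1n\sum_{i=1}^n d(x_i,o)^p\big)^{1/p}$, which is finite as a sum of finitely many terms; hence the integrability requirement is also automatic, and $L^p(\{1,\dots,n\},M)$ coincides with the range of $\Phi$.

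Finally I would compare the metrics. For $x,y\in M^n$, the function $i\mapsto d(x_i,y_i)$ on $\{1,\dots,n\}$ has $L^p$-norm, with respect to the uniform probability measure assigning mass $1/n$ to each point, equal to $\big(\frac1n\sum_{i=1}^n d(x_i,y_i)^p\big)^{1/p}$. This is precisely $d_p(x,y)$ as defined on $M^n$ in \nmb!{3.1}, and by construction it equals $d_p(\Phi(x),\Phi(y))$ as defined on $L^p(\{1,\dots,n\},M)$ in \nmb!{4.1}. Thus $\Phi$ is an isometric bijection, which establishes the claim.
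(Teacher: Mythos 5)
Your proposal is correct and follows essentially the same route as the paper's (one-line) proof: identify $n$-tuples with functions on $\{1,\dots,n\}$ and observe that the two $d_p$ metrics coincide by definition. The additional checks you spell out (automatic measurability and integrability on a finite probability space) are accurate and harmless elaborations of the same argument.
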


\begin{proof}
A configuration $x\in M^n$ is precisely a function $x\colon\{1,\dots,n\}\to M$, and the metrics $d_p$ defined on $M^n$ and $L^p(\{1,\dots,n\},M)$ coincide. 
\end{proof}

The description of configurations as random variables allows one to pass to a \emph{large-sample limit}.
Similar results are shown in \cite{kuwae2008variational}.
In the following lemma, $(0,1)$ denotes the unit interval with the Lebesgue measure and could, for all purposes, be replaced by any standard probability space.

\begin{lemma}[Infinite configurations]
\nmb.{4.3}
The configuration spaces $(M^n,d_p)$ are isometrically embedded in the complete path-metric space $(L^p((0,1),M),d_p)$ and converge to it in the following sense:
for any compact $K\subset L^p((0,1),M)$,
\begin{equation*}
\lim_{n\to\infty}\sup_{x\in K}\inf_{y\in M^n}d_p(x,y)=0.
\end{equation*}
\end{lemma}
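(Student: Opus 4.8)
The plan is to treat the three claims—the isometric embedding, the approximation limit, and completeness together with the path-metric property—reducing everything possible to \nmb!{3.1} and \nmb!{4.2}. First I would exhibit the embedding: for $y=(y_1,\dots,y_n)\in M^n$ set $\iota_n(y)(t)\coloneqq y_i$ for $t\in((i-1)/n,i/n]$. Viewing $M^n$ as $L^p(\{1,\dots,n\},M)$ via \nmb!{4.2} and splitting $\int_0^1$ into the $n$ subintervals of length $1/n$ gives $d_p(\iota_n(x),\iota_n(y))^p=\frac1n\sum_{i=1}^n d(x_i,y_i)^p=d_p(x,y)^p$, so $\iota_n$ is isometric; halving each subinterval shows $\iota_{2^j}(M^{2^j})\subseteq\iota_{2^k}(M^{2^k})$ for $j\le k$, so these copies are nested.

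For the limit I would first reduce to step functions constant on the blocks of a finite partition $0=s_0<\dots<s_k=1$. Such functions are dense in $L^p((0,1),M)$: simple functions are dense by separability of $M$ and dominated convergence, and each measurable level set is approximable in measure by a finite union of intervals by regularity of Lebesgue measure. Given such a step function $s$ and $n\in\mathbb N$, define $y\in M^n$ by taking $y_i$ to be the value of $s$ on $((i-1)/n,i/n]$ whenever that interval meets no partition point, and an arbitrary value otherwise. Then $\iota_n(y)$ and $s$ differ only on the at most $k$ intervals straddling a partition point, a set of measure $\le k/n$, on which $d(\iota_n(y),s)$ is bounded by the finite diameter $D$ of the value set of $s$; hence $\inf_{y'\in M^n}d_p(s,y')\le D(k/n)^{1/p}\to0$. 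To upgrade this to uniformity over a compact $K$, I would use total boundedness: cover $K$ by finitely many $\varepsilon$-balls about centres $x_1,\dots,x_N$, pick $n$ so large that $\inf_{y\in M^n}d_p(x_l,y)<\varepsilon$ for every $l$, and conclude from $\inf_{y\in M^n}d_p(x,y)\le d_p(x,x_l)+\inf_{y\in M^n}d_p(x_l,y)$ that $\sup_{x\in K}\inf_{y\in M^n}d_p(x,y)<2\varepsilon$ for all large $n$.

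It remains to see that $(L^p((0,1),M),d_p)$ is a complete path-metric space. Completeness is the standard $L^p$ argument: from a Cauchy sequence extract a subsequence with summable consecutive distances, obtain a pointwise a.e.\ limit using completeness of $(M,d)$, and conclude $d_p$-convergence by Fatou's lemma and dominated convergence. For the path-metric property I would reuse the nested embeddings. The union $U\coloneqq\bigcup_k\iota_{2^k}(M^{2^k})$ is an increasing union of isometric copies of the path-metric spaces $(M^{2^k},d_p)$ of \nmb!{3.1}; any two of its points lie in a common $\iota_{2^k}(M^{2^k})$, in which their distance is approximated by lengths of paths lying in $\iota_{2^k}(M^{2^k})\subseteq U$, so the intrinsic metric of $U$ coincides with $d_p$ and $U$ is a path-metric space. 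By the approximation limit (along dyadic $n$) the set $U$ is dense, so $L^p((0,1),M)$ is its completion, and the completion of a path-metric space is again a path-metric space \cite{burago2001course}. Alternatively one may argue directly via the approximate-midpoint criterion \nmb!{8.3}, exactly as in \nmb!{3.1}, the only extra ingredient being a Borel measurable selection $c(a,b)$ of approximate midpoints in the Polish space $M$, after which $z(t)\coloneqq c(x(t),y(t))$ is the required approximate midpoint in $L^p$.

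The main obstacle is the approximation limit in its uniform form: the pointwise estimate for a single step function is an elementary count of straddling intervals, but passing to arbitrary $x\in L^p((0,1),M)$ and then to uniformity genuinely requires both the density of step functions and the total boundedness of $K$, rather than mere pointwise approximation. A secondary technical point is the path-metric structure in this infinite-dimensional setting; the completion route above sidesteps the measurable selection that the direct midpoint argument would otherwise demand, and it does not require local compactness of $M$, which is not assumed in this section.
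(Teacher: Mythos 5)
Your proof is correct, but the core approximation step is argued quite differently from the paper. Both proofs set up the embedding identically, via piecewise constant functions on the $n$ subintervals of length $1/n$, and both obtain uniformity over $K$ by covering it with finitely many balls and approximating the centres. Where you diverge is in showing that a single $x\in L^p((0,1),M)$ is approximable by elements of $M^n$: the paper truncates $x$ to a bounded range, embeds the ball $B_r(o)\subset M$ isometrically into the Banach space $F$ of bounded continuous functions via $a\mapsto d(a,\cdot)$, applies the conditional expectation $\mathbb E_n$ with respect to the partition into $n$ intervals, and then maps back into $M$ by a nearest-point map $A\colon F\to M$; you instead invoke the standard density of step functions in $L^p((0,1),M)$ (finite-valued approximation by separability plus truncation, then approximation of level sets by finite unions of intervals) and approximate a step function with $k$ blocks by an element of $M^n$ up to an error $D(k/n)^{1/p}$ coming from the intervals straddling partition points. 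Your route is more elementary and stays inside $M$-valued functions throughout, avoiding the Banach-space detour and in particular any existence or selection issues for the nearest-point map $A$; the paper's route avoids an explicit discussion of level sets and interval approximation. Two small remarks: in your straddling-interval estimate the ``arbitrary value'' on a bad interval must be taken in (or at bounded distance from) the value set of $s$ for the bound by $D$ to apply; and note that the paper actually defers the proof that $(L^p((0,1),M),d_p)$ is a complete path-metric space to \nmb!{4.4}, where it is obtained via a measurable selection of approximate midpoints --- your alternative, realizing $L^p((0,1),M)$ as the completion of the dense length subspace $\bigcup_k\iota_{2^k}(M^{2^k})$ and then either quoting that completions of length spaces are length spaces or pushing approximate midpoints from the dense subspace through \nmb!{8.3}, reaches the same conclusion without measurable selection and without any compactness hypothesis on $M$.
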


The lemma would imply pointed Gromov--Hausdorff convergence of $(M^n,d_p)$ to the space $L^p((0,1),M)$ if the uniform convergence on compacts could be strengthened to uniform convergence on bounded sets. 
However, this is not the case, as one easily verifies by considering functions $x$ of the form $n^{1/p}\mathbbm{1}_{[0,1/n]}$ for large $n$.

\begin{proof}
The isometric immersion of $(M^n,d_p)\cong L^p(\{1,\dots,n\},M)$ into $L^p((0,1),M)$ is given by the identification of $n$-tuples with piece-wise constant functions on $(0,1)$.
It remains to prove the convergence. 
Let $\epsilon>0$.
By the compactness of $K$, there are $m \in \mathbb N$ and $x_1,\dots,x_m \in L^p((0,1),M)$ such that the open $d_p$-balls $B_{\epsilon/3}(x_i)$ cover $K$.
Let $o \in M$.
By the dominated convergence theorem, there is $r>0$ such that the configurations $y_i \in L^p((0,1),M)$ defined by
\begin{align*}
y_i
\coloneqq 
\left\{
\begin{aligned}
&x_i,&&d(x_i,o)\leq r,
\\
&o,&&d(x_i,o)>r,
\end{aligned}
\right.
\end{align*}
satisfy $d_p(x_i,y_i)\leq\epsilon/3$ for all $i\in\{1,\dots,m\}$.
Let $F$ be the Banach space of continuous bounded functions on $B_r(o)$ with the uniform norm. 
Then $(B_r(o),d)$ embeds isometrically into $F$ via the map $B_r(o) \ni a \mapsto d(a,\cdot) \in F$.
Thus, $B_r(o)$ may be seen as a subset of $F$. 
Moreover, $F$ is separable because $B_r(o)$ is separable. 
For any $n \in \mathbb N$, let $\mathbb E_n\colon L^p((0,1),F)\to L^p((0,1),F)$ be the conditional expectation with respect to the sigma-algebra generated by the intervals $[\frac{j-1}{n},\frac{j}{n})$, $j \in \{1,\dots,n\}$.
Then, for sufficiently large $n$, the configurations $z_i\coloneqq \mathbb E_n(y_i)$ satisfy $d_p(y_i,z_i)\leq\epsilon/3$ for all $i \in \{1,\dots,m\}$.
Let $A\colon F \to M$ be the metric projection from $f \in F$ to the nearest point $A(f) \in M$, 
and let $A_*\colon L^p((0,1),F)\to L^p((0,1),M)$ be the push-forward along $A$. 
Then the configurations $w_i\coloneqq A_*z_i$ satisfy for all $i \in \{1,\dots,m\}$ that
\begin{align*}
d_p(z_i,w_i)
=
d_p(z_i,A_*z_i)
\leq
d_p(z_i,y_i)
\leq 
\epsilon/3.
\end{align*}
It follows that every $x \in K$ is $\epsilon$-close to some $w_i \in L^p(\{1,\dots,n\},M)$.
\end{proof}

Recall that any continuous curve $c\colon[0,1]\to L^p((0,1),M)$ has a jointly measurable version $c\colon[0,1]\times(0,1)\to M$; see e.g.\@ \cite[Proposition~3.2]{daprato2014stochastic}. 
Then the \emph{sample paths} of $c$ are the measurable functions $c(\cdot,\omega)\colon[0,1]\to M$, $\omega \in (0,1)$.

\begin{lemma}[Geodesics between infinite configurations]
\nmb.{4.4}
$(L^p((0,1),M),d_p)$ is a complete path-metric space.
For $p>1$, a continuous curve $c\colon[0,1]\to L^p((0,1),M)$ is a constant-speed minimizing geodesic in $(L^p((0,1),M),d_p)$ if and only if almost all of its sample paths are constant-speed minimizing geodesics in $M$. 
\end{lemma}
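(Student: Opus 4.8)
The plan is to follow the finite-dimensional template of Theorem~\nmb!{3.5} almost verbatim, replacing the uniform average over $\{1,\dots,n\}$ by the integral over $(0,1)$ and supplying, where needed, a measurable-selection argument together with Fubini's theorem. I would first dispatch completeness and the path-metric property. Completeness of $(L^p((0,1),M),d_p)$ will follow from completeness of $(M,d)$ by the usual device: given a Cauchy sequence, pass to a subsequence whose consecutive $d_p$-distances are summable so that the sample values converge $\mathbb P$-almost everywhere to a measurable $M$-valued limit, and then upgrade to $L^p$-convergence by dominated convergence. For the path-metric property I would invoke the approximate-midpoint criterion \nmb!{8.3} exactly as in the proof of \nmb!{3.1}: fixing $r>1/2$, each pair $a,b\in M$ admits by \nmb!{8.3} a point with $\max\{d(a,\cdot),d(\cdot,b)\}\le r\,d(a,b)$, the set-valued map $\omega\mapsto\{c:\max\{d(x(\omega),c),d(c,y(\omega))\}\le r\,d(x(\omega),y(\omega))\}$ is measurable with nonempty closed values, and since $M$ is Polish the Kuratowski--Ryll-Nardzewski theorem yields a measurable selection $z\in L^p((0,1),M)$; taking the $L^p$-norm of the pointwise estimate gives $\max\{d_p(x,z),d_p(z,y)\}\le r\,d_p(x,y)$, which by \nmb!{8.3} establishes that $d_p$ is a path metric.

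For the geodesic characterization with $p>1$, I would set up the Lagrangian energy--action pairs $(E,A)$ on $(M,d)$ and $(E_p,A_p)$ on $L^p((0,1),M)$ as in \nmb!{3.5}, with $E^{s,t}(a,b)=d(a,b)^p/|s-t|^{p-1}$ and $E_p^{s,t}(x,y)=d_p(x,y)^p/|s-t|^{p-1}$. By \nmb!{8.5} a continuous curve $c$ is a constant-speed minimizing geodesic in $L^p$ precisely when the superadditivity defect
$$E_p^{u,v}(c(u),c(v))+E_p^{v,w}(c(v),c(w))-E_p^{u,w}(c(u),c(w))$$
vanishes for all $u\le v\le w$. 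Passing to a jointly measurable version of $c$ via \cite[Proposition~3.2]{daprato2014stochastic} and writing $E_p^{s,t}(x,y)=\int_0^1 E^{s,t}(x(\omega),y(\omega))\,d\omega$ by Fubini, this defect becomes the integral over $(0,1)$ of the pointwise defects, each non-negative by the triangle inequality in $M$ together with convexity of $t\mapsto t^p$. The `if' direction is then immediate: if almost every sample path is a constant-speed minimizing geodesic, each integrand vanishes identically and so does the integral, whence $c$ is a geodesic by \nmb!{8.5}. For the `only if' direction, vanishing of the integral forces, for each fixed triple, the non-negative integrand to vanish for almost every $\omega$; intersecting the countably many null sets arising from triples $(u,v,w)\in\mathbb Q^3$ with $u\le v\le w$ produces a full-measure set of $\omega$ on which energy-additivity holds for all rational triples, and hence, by continuity of $E$, for all triples, so that almost every sample path is a constant-speed minimizing geodesic by \nmb!{8.5} applied to $M$.

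The step I expect to be the main obstacle is the upgrade from `a.e.\ for each triple' to `a genuine continuous geodesic for a.e.\ sample path'. Restricting to rational triples disposes of the uncountability, but one must still see that the measurable sample path coincides with an honest continuous constant-speed geodesic on all of $[0,1]$: for $p>1$ the equality case of the relevant power-mean inequality shows that energy-additivity on rational triples forces $d(c(s,\omega),c(t,\omega))=L_\omega|s-t|$ for all rational $s,t$, so the sample path is $L_\omega$-Lipschitz on the rationals and extends uniquely, by completeness of $M$, to a constant-speed minimizing geodesic agreeing with the continuous $L^p$-curve $c$. A secondary technical point is the measurability of the approximate-midpoint selection in the path-metric step, which is exactly what the Kuratowski--Ryll-Nardzewski theorem on the Polish space $M$ is needed for. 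Finally I would note that $p=1$ is excluded from the geodesic characterization for the same reason as in \nmb!{3.5}, namely the failure of strict convexity of the energy, which is why only the weaker statement without the constant-speed requirement survives in that case.
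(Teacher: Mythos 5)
Your proposal is correct and follows essentially the same route as the paper: completeness plus a measurable selection of approximate midpoints for the path-metric property (the paper cites Dellacherie's selection theorem where you invoke Kuratowski--Ryll-Nardzewski, but the role is identical), then the Lagrangian energy--action pairs of \nmb!{8.5}, Fubini to reduce the energy-additivity defect to an integral of non-negative pointwise defects, restriction to rational triples to handle uncountability, and the continuous constant-speed extension of the sample paths from the rationals. Your explicit treatment of the last extension step (Lipschitz on the rationals, hence a unique continuous extension agreeing with the $L^p$-curve) is exactly the point the paper's proof also relies on, stated slightly more tersely there.
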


\begin{proof}
To show that $L^p(\Omega,M)$ is a complete path-metric space, we proceed as in the proof of \nmb!{3.1}, 
noting that the point $c=c(a,b)$ can be chosen as a measurable function of $a,b$.
Indeed, this follows from a measurable selection theorem \cite{dellacherie1975ensembles} because the set 
\begin{equation*}
\Gamma\coloneqq \big\{(a,c,b)\in M^3:\max\{d(a,c),d(c,b)\}\leq \al d(a,b)\big\} 
\end{equation*}
is Polish, the projection $\Gamma\ni(a,c,b)\to (a,b)\in M^2$ is continuous, and the inverse image of any $(a,b)\in M^2$ under this projection is compact. 
To prove the statement about geodesics, we proceed as in the proof of \nmb!{3.5} and associate Lagrangian energy--action pairs $(E,A)$ and $(E_p,A_p)$ to $(M,d)$ and $(L^p((0,1),M),d^p)$, respectively.
By \nmb!{8.3} a continuous curve $c\colon[0,1]\to L^p((0,1),M)$ is a length-minimizing constant-speed geodesic if and only if it satisfies for all $u\leq v\leq w$ in $[0,1]$ that
\begin{equation*}
E_p^{u,v}(c(u),c(v))+E_p^{v,w}(c(v),c(w)-E_p^{u,w}(c(u),c(w))=0.
\end{equation*}
Equivalently, by the definitions of $E$ and $E_p$, 
\begin{equation*}
\mathbb E\big[E^{u,v}(c(u),c(v))+E^{v,w}(c(v),c(w)-E^{u,w}(c(u),c(w))\big]=0,
\end{equation*}
where $\mathbb E$ is the expectation with respect to the Lebesgue measure on $(0,1)$.
Equivalently, the following property holds almost surely: for all rational numbers $u\leq v\leq w$ in $[0,1]$,
\begin{equation*}
E^{u,v}(c(u),c(v))+E^{v,w}(c(v),c(w)-E^{u,w}(c(u),c(w))=0.
\end{equation*} 
By \nmb!{8.3} this implies for almost every $\omega \in (0,1)$ that the sample path 
\begin{equation*}
[0,1]\cap\mathbb Q \ni u \mapsto c(u,\omega)
\end{equation*}
is parameterized by constant speed. 
In particular, any such sample path can be extended continuously to all real numbers in $[0,1]$. 
Thus, we have established that $c$ has a version whose sample paths are almost surely constant-speed minimizing geodesics.
Moreover, this property is equivalent to the previous ones.
\end{proof}

On finite probability spaces, the statement about geodesics in \nmb!{4.4} extends to $p=1$ if the constant-speed condition is omitted, as shown in \nmb!{3.5}.
However, this is not the case on infinite probability spaces, as the following example shows.

\begin{example}[Discontinuity of sample paths]
\nmb.{4.5}
Constant-speed minimizing geodesics in $L^1((0,1),M)$ may have discontinuous sample paths.
\end{example}

\begin{proof}
Let $M=\mathbb R$. The curve
\begin{equation*}
c\colon[0,1]\times(0,1) \to M, 
\qquad
c(t,\omega) \coloneqq \mathbbm{1}_{[t,1]}(\omega)
\end{equation*}
is a constant-speed minimizing geodesic in $L^1((0,1),M)$, but none of its sample paths are continuous. 
\end{proof}

\begin{definition}[probability distributions]
\nmb.{4.6}
Let $\probspace^p(M)$ denote the space of all probability distributions $P$ on $M$ which satisfy for one (equivalently, all) $o \in M$ that $\|d(o,\cdot)\|_{L^p(P)}<\infty$. 
We endow $\probspace^p(M)$ with the \emph{Wasserstein metric},
\begin{equation*}
\bar d_p(P,Q) 
= \inf_{R} \|d(\cdot,\cdot)\|_{L^p(R)},
\quad
P,Q \in \probspace^p(M),
\end{equation*}
where the infimum is 
over all probability distributions $R$ on $M\times M$ with marginals $P,Q$.
Moreover, we write $\probspace_n(M)$ for the subset of all \emph{atomic probability distributions} of the form $\frac1n\sum_{i=1}^n\delta_{x_i}$, where $\delta_{x_i}$ is the Dirac measure centered at $x_i\in M$.
\end{definition}

As an aside, the set $\probspace_n(M)$ of atomic distributions can equivalently be characterized as the set of $\{0,1/n,\dots,1\}$-valued probability measures. 
This equivalence uses the separability of $M$ and is shown in \nmb!{8.6}. 
The following lemma identifies samples with probability distributions, namely, with their \emph{empirical laws}.

\begin{lemma}[Samples as probability distributions]
\nmb.{4.7}
For any $n \in \mathbb N$, the sample space $(M^n/\S_n,\bar d_p)$ is isometric to the space $(\probspace_n(M),\bar d_p)$ of atomic probability distributions.  
\end{lemma}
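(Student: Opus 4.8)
The plan is to exhibit the natural map $\Phi\colon M^n/\S_n \to \probspace_n(M)$ that sends a sample $\bar x = \proj(x_1,\dots,x_n)$ to its empirical law $\frac1n\sum_{i=1}^n\delta_{x_i}$, and to show that $\Phi$ is a well-defined bijective isometry. Well-definedness is immediate because the empirical law does not depend on the ordering of the components, and surjectivity holds by the very definition of $\probspace_n(M)$. For injectivity I would note that the empirical law recovers the multiset $\{x_1,\dots,x_n\}$, since the multiplicity of a point $a$ equals $n$ times the mass the law assigns to $a$ (this is the $1/n$-quantization recorded in \nmb!{8.6}); two configurations with the same multiset differ by a permutation and hence represent the same point of $M^n/\S_n$.

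The substance of the lemma is the isometry property. Abbreviating $c(a,b)\coloneqq d(a,b)^p$, Lemma~\nmb!{3.2} gives
\begin{equation*}
\bar d_p(\bar x,\bar y)^p = \min_{\sigma\in\S_n}\frac1n\sum_{i=1}^n c\big(x_i,y_{\sigma(i)}\big),
\end{equation*}
while Definition~\nmb!{4.6} gives
\begin{equation*}
\bar d_p\big(\Phi(\bar x),\Phi(\bar y)\big)^p = \inf_R \int_{M\times M} c\, dR,
\end{equation*}
the infimum ranging over all couplings $R$ of the two empirical laws. The inequality ``$\le$'' between these is immediate: each $\sigma$ produces the coupling $R_\sigma\coloneqq\frac1n\sum_i\delta_{(x_i,y_{\sigma(i)})}$, which has the correct marginals and cost exactly $\frac1n\sum_i c(x_i,y_{\sigma(i)})$, so the Wasserstein infimum is at most the assignment minimum.

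For the reverse inequality I would invoke the Birkhoff--von Neumann theorem. Every coupling $R$ is supported on the product of the two finite supports, so it is determined by the masses it assigns to pairs of support points, and the cost $\int c\, dR$ is an affine function of these masses. These masses range over the transportation polytope prescribed by the marginals, which by the $1/n$-quantization is the image, under summation over coinciding indices, of the polytope of nonnegative $n\times n$ matrices whose row and column sums all equal $1/n$, i.e.\ of $\frac1n$ times the doubly stochastic matrices. By Birkhoff--von Neumann the latter polytope is the convex hull of the $\frac1n$-scaled permutation matrices; since the cost is linear and the polytope compact, its minimum is attained at a vertex, that is, at some permutation $\sigma$. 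Hence the Wasserstein infimum is attained by a coupling $R_\sigma$ and equals $\min_\sigma\frac1n\sum_i c(x_i,y_{\sigma(i)})$, which matches the quotient metric and completes the proof.

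The step I expect to be the main obstacle is the bookkeeping when some of the $x_i$ or $y_j$ coincide. In that case couplings correspond only to the coarser transportation polytope on \emph{distinct} support points, and one must check that this polytope is exactly the fiberwise image of the scaled doubly stochastic matrices, so that minimizing over permutations of indices still reaches the Wasserstein infimum. This is a routine feasibility verification --- spreading each block of mass uniformly over its index pairs already exhibits a preimage with the same cost --- but it is the one place where the argument needs genuine care rather than formal manipulation.
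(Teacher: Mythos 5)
Your proposal is correct and follows essentially the same route as the paper: identify samples with empirical laws and use Birkhoff's theorem to show the Wasserstein infimum over couplings is attained at a permutation coupling, hence coincides with the quotient metric of \nmb!{3.2}. The paper delegates this step to a citation (Proposition~1.3.1 of Panaretos--Zemel), while you spell out the Birkhoff--von Neumann argument including the multiplicity bookkeeping for coinciding points, which you handle correctly.
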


\begin{proof}
Samples $\bar x=\proj(x) \in M^n/\S_n$ are naturally identified with atomic probability distributions $P=\frac1n\sum_{i=1}^n \delta_{x_i} \in \probspace_n(M)$.
If $\bar y=\proj(y)\in M^n/\S_n$ is another sample with corresponding probability distribution $Q=\frac1n\sum_{i=1}^n\delta_{y_i}\in\probspace_n(M)$, then
\begin{align*}
\bar d_p(\bar x,\bar y)
&=
\min_{\proj(x) = \bar x, \proj(y)=\bar y}d_p(x,y) 
=
\min_{\proj(x) = \bar x, \proj(y)=\bar y}\|d(x,y)\|_{L^p(\{1,\dots,n\})}
\\&= 
\min_{R}\|d(\cdot,\cdot)\|_{L^p(R)},
\end{align*}
where the last minimum is over all atomic probability distributions $R\in \probspace_n(M\times M)$ with marginal laws $P$ and $Q$.
By Birkhoff's theorem, one may equivalently take the minimum over the larger set of all (not necessarily atomic) probability distributions $R$ on $M\times M$ with marginal laws $P$ and $Q$ 
\cite[Proposition~1.3.1]{panaretos2020invitation}.
This shows that the right-hand side equals $\bar d_p(P,Q)$.
Therefore, the identification of samples with probability distributions is an isometry.
\end{proof}

\begin{lemma}[Infinite samples]
\nmb.{4.8}
The sample spaces $(M^n/\S_n,\bar d_p)$ are isometrically embedded in the complete path-metric space $(\probspace^p(M),\bar d_p)$.
For $M$ locally compact, they converge to $(\probspace^p(M),\bar d_p)$ in the following sense: for any compact $K\subset \probspace^p(M)$,
\begin{equation*}
\lim_{n\to\infty}\sup_{P \in K}\inf_{Q \in M^n/\S_n} \bar d_p(P,Q)=0.
\end{equation*}
Here $M^n/\S_n$ is identified with the subset $\probspace_n(M)$ of $\probspace^p(M)$ using \nmb!{4.7}.
\end{lemma}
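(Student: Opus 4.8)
The plan is to prove the two assertions separately, deriving the uniform convergence claim from a single pointwise density statement once the compactness of $K$ is exploited.

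\textbf{The isometric embedding.} By \nmb!{4.7} the sample space $(M^n/\S_n,\bar d_p)$ is isometric to $(\probspace_n(M),\bar d_p)$, and since $\probspace_n(M)\subset\probspace^p(M)$ with the Wasserstein metric of \nmb!{4.6} restricting to the metric used in \nmb!{4.7}, this gives the asserted isometric embedding. It remains to justify that $(\probspace^p(M),\bar d_p)$ is a complete path-metric space. Here I would use the law map $L\colon L^p((0,1),M)\to\probspace^p(M)$, $x\mapsto x_*\mathrm{Leb}$. Since $(0,1)$ is atomless, every measure in $\probspace^p(M)$ and every coupling of two such measures is realized by random variables on $(0,1)$, so $\bar d_p(P,Q)=\inf\{d_p(x,y):Lx=P,\ Ly=Q\}$; that is, $\bar d_p$ is exactly the quotient metric induced by $L$. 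As $(L^p((0,1),M),d_p)$ is a complete path-metric space by \nmb!{4.4}, the quotient inherits both properties by the argument of \nmb!{3.2}, where a path metric descends to the quotient and a Cauchy sequence is lifted with controlled jumps and converges upstairs. Under $L$ a configuration $x\in M^n$, viewed as a step function on the $n$ equal subintervals of $(0,1)$, maps to its empirical law $\tfrac1n\sum_i\delta_{x_i}$, so $L(M^n)=\probspace_n(M)$, consistent with the embedding.

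\textbf{Reduction to a pointwise statement.} For the convergence I would first invoke compactness of $K$, hence total boundedness: fix $\epsilon>0$ and choose a finite $\epsilon$-net $P_1,\dots,P_m\in K$. Suppose I can establish the pointwise claim that for each fixed $P\in\probspace^p(M)$ one has $\inf_{Q\in\probspace_n(M)}\bar d_p(P,Q)\to0$ as $n\to\infty$. Applying it to the finitely many centers $P_j$ and taking the largest of the resulting thresholds yields an $N$ such that for all $n\ge N$ and all $j$ there is $Q\in\probspace_n(M)$ with $\bar d_p(P_j,Q)\le\epsilon$. For arbitrary $P\in K$, choosing $j$ with $\bar d_p(P,P_j)\le\epsilon$ and using the triangle inequality gives $\inf_{Q\in\probspace_n(M)}\bar d_p(P,Q)\le2\epsilon$ uniformly over $K$. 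Thus the uniformity over $K$ is cheap once the pointwise statement is in hand.

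\textbf{The pointwise approximation (main work).} Fix $P\in\probspace^p(M)$, $o\in M$, and $\epsilon>0$. Using the finite $p$-th moment of $P$, I would pick $R$ so large that the mass of $P$ outside $\bar B_R(o)$ contributes less than $\epsilon$ to $\bar d_p$ after being collapsed onto $o$; this tail contribution is exactly the controlled quantity $\int_{\bar B_R(o)^c}d(x,o)^p\,dP$. Here local compactness enters decisively: by Hopf--Rinow \nmb!{8.2} the closed ball $\bar B_R(o)$ is compact, hence totally bounded, so it splits into finitely many Borel cells of diameter $<\delta$. Collapsing the mass of each cell to a representative point and the outer mass to $o$ produces a finitely supported $\hat P=\sum_{i=1}^s w_i\delta_{x_i}$ with $\bar d_p(P,\hat P)<\epsilon$ via the obvious transport plan. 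To realize such a measure by uniform weights for \emph{all} large $n$, and not only for $n$ divisible by the denominators of the $w_i$, I set $m_i=\lfloor n w_i\rfloor$ and distribute the at most $s$ leftover units among the atoms, obtaining $Q_n=\tfrac1n\sum_i m_i\delta_{x_i}\in\probspace_n(M)$ with $\sum_i|w_i-m_i/n|\le s/n$; since $\hat P$ and $Q_n$ share the fixed finite support, this forces $\bar d_p(\hat P,Q_n)\to0$, whence $\bar d_p(P,Q_n)<2\epsilon$ for $n$ large.

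The step I expect to be the crux is the finite-support approximation: producing finitely rather than countably many atoms with uniformly small transport error is precisely what requires the closed balls of $M$ to be totally bounded, which is where the local compactness hypothesis is genuinely used, through \nmb!{8.2}. The passage from rational weights on a subsequence of $n$ to all large $n$ is then a routine rounding estimate, and the uniformity over $K$ is handled by the finite-net reduction above. Alternatively one could argue entirely upstairs: lift $K$ to a compact set of random variables through $L$, invoke the configuration-space convergence \nmb!{4.3}, and push the estimate down using that $L$ is $1$-Lipschitz; the obstacle in that route is exactly the construction of a compact lift of $K$, which again rests on tightness of $K$ and hence on properness of $M$.
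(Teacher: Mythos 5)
Your proof is correct and arrives at the same conclusion, but the decisive step is carried out by a genuinely different, self-contained argument. The paper handles the pointwise density of $\probspace_n(M)$ in $\probspace^p(M)$ by citation: for each net point $P_i$, the empirical measures of an \iid{} $P_i$-sample converge to $P_i$ in $\bar d_p$ (Panaretos--Zemel, Prop.~2.2.6), so a suitable $n$-atomic $Q_i$ exists for large $n$. You instead construct $Q_n$ explicitly — truncate to a compact ball (correctly identifying this as the place where local compactness and Hopf--Rinow \nmb!{8.2} enter), discretize the ball into finitely many small cells, collapse to a finitely supported $\hat P$, and round the weights to multiples of $1/n$. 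This buys a deterministic, quantitative argument valid for all large $n$ simultaneously, whereas the paper's citation is shorter but hides the role of local compactness inside the quoted result; the finite $\epsilon$-net reduction over $K$ is identical in both. For the first assertion, the paper simply cites Villani for completeness and the path-metric property of $(\probspace^p(M),\bar d_p)$, while you re-derive them by exhibiting $\bar d_p$ as the quotient metric under the law map $L\colon L^p((0,1),M)\to\probspace^p(M)$ and transporting the argument of \nmb!{3.2}. That route is viable, but one step is under-justified: to lift a Cauchy sequence $(P_k)$ with summable gaps you must, given a \emph{fixed} $x_k$ with law $P_k$, produce $x_{k+1}$ with law $P_{k+1}$ and $d_p(x_k,x_{k+1})$ close to $\bar d_p(P_k,P_{k+1})$. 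This conditional (fixed-first-marginal) realization of a near-optimal coupling is strictly stronger than the two-sided infimum of \nmb!{4.9} that you invoke; it requires disintegrating the coupling and using the isomorphism $(0,1)\cong(0,1)^2$ to inject fresh randomness. The fact is standard on an atomless standard probability space, but as written your completeness argument leaves this gap, which the paper avoids by citing the known result.
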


\begin{proof}
The sample space $(M^n/\S_n,\bar d_p)$ is isometrically embedded in $(\probspace^p(M),\bar d_p)$ as a consequence of \nmb!{4.7}.
It is well-known that the Wasserstein metric $\bar d_p$ on $\probspace_p(M)$ is a complete path metric \cite[Theorem~6.18 and Corollary~7.22]{villani2008optimal}.
It remains to prove the convergence. 
Let $\epsilon>0$. 
As $K$ is compact, there are $m \in \mathbb N$ and $P_1,\dots,P_m \in K$ such that the open $\bar d_p$-balls $B_{\epsilon/2}(P_i)$ cover $K$.
For each $i \in \{1,\dots,m\}$, the empirical distributions of $P_i$ converge to $P_i$ in the Wasserstein distance $\bar d_p$ \cite[Proposition~2.2.6]{panaretos2020invitation}. 
Therefore, there are distributions $Q_1,\dots,Q_m\in \probspace_n(M)$ for some $n \in \mathbb N$ such that $\bar d_p(P_i,Q_i)\leq \epsilon/2$ for all $i \in \{1,\dots,m\}$.
It follows that every $P \in K$ is $\epsilon$-close to some distribution in $\probspace_n(M)$.
\end{proof}

Recall from \nmb!{3.2} that the sample space $(M^n/\S_n,\bar d_p)$ is the path-metric quotient of the configuration space $(M^n,d_p)$ with respect to the action of permutation group of $\{1,\dots,n\}$.
A similar statement applies to infinite sample and configuration spaces, as shown in the following lemma.
In analogy to \nmb!{3.2}, let $\proj\colon L^p((0,1),M)\to \probspace^p(M)$ be the map from random variables to their law or, in more analytic terms, the push-forward of the Lebesgue measure along the given measurable function.
Moreover, let $\Aut((0,1))$ be the automorphism group of the probability space $(0,1)$, i.e., the group of bi-measurable measure-preserving functions from $(0,1)$ to itself.

\begin{lemma}[Quotient structure]
\nmb.{4.9}
The Wasserstein metric $\bar d_p$ on $\probspace^p(M)$ is a quotient metric:
\begin{align*}
\bar d_p(P,Q)
=
\inf_{\proj(x)=P,\proj(y)=Q}d_p(x,y)=\inf_{\sigma \in \Aut((0,1))}d_p(x,y\o\sigma),
\end{align*}
where $P,Q \in \probspace^p(M)$ and $x,y \in L^p((0,1),M)$ with $\proj(x)=P$, $\proj(y)=Q$.
\end{lemma}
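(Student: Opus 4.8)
The plan is to prove the two asserted equalities separately, using throughout that $M$ is separable and complete, hence Polish, and that $\big((0,1),\lambda\big)$ with Lebesgue measure $\lambda$ is a standard atomless probability space. For the first equality I would argue both inequalities. The ``$\le$'' direction is immediate: any pair $x',y'\in L^p((0,1),M)$ with $\proj(x')=P$ and $\proj(y')=Q$ has joint law $R\coloneqq(x',y')_*\lambda$ on $M\x M$ with marginals $P$ and $Q$, and since $d_p(x',y')=\|d(\cdot,\cdot)\|_{L^p(R)}$, this already forces $d_p(x',y')\ge\bar d_p(P,Q)$, whence $\inf_{\proj(x')=P,\proj(y')=Q}d_p(x',y')\ge\bar d_p(P,Q)$. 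For ``$\ge$'', fix $\ep>0$ and a coupling $R$ of $P,Q$ with $\|d(\cdot,\cdot)\|_{L^p(R)}\le\bar d_p(P,Q)+\ep$; because $R$ is a Borel probability measure on the Polish space $M\x M$, there is a measurable $T\colon(0,1)\to M\x M$ with $T_*\lambda=R$, and writing $T=(x',y')$ yields lifts with $d_p(x',y')=\|d(\cdot,\cdot)\|_{L^p(R)}\le\bar d_p(P,Q)+\ep$. Letting $\ep\to0$ gives the first equality.

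For the second equality I would reduce it to a density statement for the action of $\Aut((0,1))$. The key fact is that for a single fixed $x$ with $\proj(x)=P$, the orbit $\{x\o\tau:\tau\in\Aut((0,1))\}$ is $d_p$-dense in the entire fiber $\proj\i(P)=\{x':\proj(x')=P\}$, and similarly for $y$. Granting this, since $(u,v)\mapsto d_p(u,v)$ is continuous, the infimum over all lift pairs from the first part is unchanged upon restricting to the dense set of orbit pairs, so
\begin{equation*}
\bar d_p(P,Q)=\inf_{\tau,\sigma\in\Aut((0,1))}d_p(x\o\tau,y\o\sigma).
\end{equation*}
A measure-preserving change of variables gives $d_p(x\o\tau,y\o\sigma)=d_p\big(x,y\o(\sigma\o\tau\i)\big)$, and as $\tau,\sigma$ range over $\Aut((0,1))$ the element $\rho\coloneqq\sigma\o\tau\i$ ranges over all of $\Aut((0,1))$, yielding $\bar d_p(P,Q)=\inf_{\rho}d_p(x,y\o\rho)$.

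The main obstacle is the orbit-density fact, and this is exactly where the atomless structure of $(0,1)$ is essential. To prove it I would argue by approximation. Given $x,x'$ with the same law $P$ and $\ep>0$, first choose $R>0$ so that the tails $\int_{\{d(x,o)>R\}}d(x,o)^p\,d\lambda$ and $\int_{\{d(x',o)>R\}}d(x',o)^p\,d\lambda$ are both small (possible since $x,x'\in L^p$), then partition the ball $B_R(o)\subset M$ into finitely many Borel sets $B_1,\dots,B_N$ of diameter $<\ep$ together with the tail set $B_0\coloneqq M\setminus B_R(o)$. Because $x$ and $x'$ share the law $P$, the level sets $x\i(B_j)$ and $(x')\i(B_j)$ have equal $\lambda$-measure for every $j$; being atomless standard spaces of equal mass, they admit a measure-preserving bijection $(x')\i(B_j)\to x\i(B_j)$, and assembling these over $j$ produces $\tau\in\Aut((0,1))$ with $\tau\big((x')\i(B_j)\big)=x\i(B_j)$ modulo null sets. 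For $j\ge1$ one then has $d\big(x'(\om),(x\o\tau)(\om)\big)<\ep$ whenever $x'(\om)\in B_j$, while the contribution of $B_0$ is controlled by $d(a,b)^p\le 2^{p-1}\big(d(a,o)^p+d(b,o)^p\big)$ together with the uniform smallness of the tails.

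Integrating then makes $d_p(x',x\o\tau)$ arbitrarily small, which establishes the density and closes the argument. I expect the two places requiring care to be the realization of an arbitrary coupling as a joint law on $(0,1)$ in the first paragraph and the simultaneous matching of all level sets in the last, since both rely on the standard-Borel and atomless structure of the index space; however, both are classical, so no further hypotheses on $M$ beyond separability and completeness are needed.
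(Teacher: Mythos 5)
Your proof follows essentially the same route as the paper: the first equality is obtained by realizing a (near-)optimal coupling as the joint law of a pair of random variables on the standard atomless space $(0,1)$, and the second by the near-transitivity of $\Aut((0,1))$ on the fibers of $\proj$, which the paper simply cites (Cardaliaguet's lecture notes, Lemma~6.4) and you instead prove directly via the partition-and-rearrangement argument. The one repair needed in that density argument is that a ball $B_R(o)$ in a separable complete path-metric space need not be totally bounded (local compactness is not assumed in this lemma), so it cannot in general be split into \emph{finitely} many Borel sets of diameter $<\ep$; replacing this by a \emph{countable} Borel partition into sets of diameter $<\ep$ (available by separability) leaves the rest of your argument --- the matching of level sets of equal measure, the assembly of the piecewise measure-preserving bijections into an element of $\Aut((0,1))$, and the tail estimate --- unchanged.
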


\begin{proof}
The first equality holds because any coupling $R$ in the definition \nmb!{4.6} of the Wasserstein metric is the joint law of some random variables $x,y \in L^p((0,1),M)$.
The second equality holds because the action of $\Aut((0,1))$ is nearly transitive on the fibers of $\proj$ in the following sense \cite[Lemma~6.4]{cardaliaguet2013notes}:
for all $x,y\in L^p((0,1),M)$ with $\proj(x)=\proj(y)$ and all $\epsilon>0$, there exists $\sigma \in \Aut((0,1))$ such that $d_p(x,y\circ\sigma)\leq \epsilon$.
\end{proof}

The following lemma generalizes \nmb!{3.6} from finite to infinite configurations and samples, respectively. 

\begin{theorem}[Geodesics between infinite samples]
\nmb.{4.10}
Let $M$ be a connected complete locally compact path-metric space. Then any minimizing geodesic in the infinite sample space $(\probspace^p(M),\bar d_p)$ is the projection of a minimizing geodesic in the configuration space $(L^p(\Omega,M),d_p)$, which we call its {\sl horizontal lift}. 
\end{theorem}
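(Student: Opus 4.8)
The plan is to mirror the proof of Theorem~\nmb!{3.6} as closely as possible, replacing the finite product $M^n$ and its permutation group by the infinite configuration space $L^p((0,1),M)$ and the group $\Aut((0,1))$, and replacing the Hopf--Rinow compactness argument by a different relative-compactness device, since closed balls in $\probspace^p(M)$ are \emph{not} compact in general (the space is infinite-dimensional). The skeleton of the argument is: given a constant-speed minimizing geodesic $\bar c\colon[0,1]\to\probspace^p(M)$ and a lift $x\in\proj\i(\bar c(0))$, build for each $m\in\mathbb N$ a polygonal lift $c_m\in C([0,1],L^p((0,1),M))$ whose restriction to each dyadic-type interval $[k/m,(k+1)/m]$ is a minimizing geodesic in $L^p((0,1),M)$ from $c_m(k/m)$ to the fiber $\proj\i(\bar c((k+1)/m))$, chosen so that $\proj(c_m(k/m))=\bar c(k/m)$. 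One then extracts a limit curve $c$ and argues that it is a minimizing-geodesic lift of $\bar c$.

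First I would justify each step in this construction. The existence of a minimizing geodesic from a point to a fiber requires that each fiber $\proj\i(\bar c((k+1)/m))$ be attainable, which is where the quotient description \nmb!{4.9} is used: the near-transitivity of $\Aut((0,1))$ together with the infimum in \nmb!{4.9} lets me choose lifts realizing the Wasserstein distance up to an error that I would let go to zero (for each fixed $m$, approximate by $\varepsilon_m\to 0$). I would use \nmb!{4.4} to guarantee that $L^p((0,1),M)$ is a complete path-metric space, so that constant-speed minimizing geodesic segments between points exist (here one also invokes local compactness of $M$, which lifts to the completeness plus a Hopf--Rinow-type statement on $M$ itself, yielding minimizers on $M$ and hence, componentwise via \nmb!{4.4}, on the configuration level). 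The curves $c_m$ have constant speed equal to $\bar d_p(\bar c(0),\bar c(1))$ up to $o(1)$, so the family is equicontinuous.

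The main obstacle, and the place where the infinite-dimensional setting genuinely departs from \nmb!{3.6}, is \textbf{relative compactness of the family} $\{c_m\}$ in $C([0,1],L^p((0,1),M))$. In the finite case this came for free from Arzel\`a--Ascoli once one had equicontinuity and pointwise pre-compactness supplied by Hopf--Rinow. Here the target $L^p((0,1),M)$ is not locally compact, so pointwise pre-compactness is not automatic. I expect to resolve this by transporting the problem down to $\probspace^p(M)$ and using that the sample paths live in a fixed Wasserstein ball: the laws $\proj(c_m(t))$ all lie on the fixed minimizing geodesic segment $\bar c$, which is a compact subset of $\probspace^p(M)$, and I would combine tightness of this compact family of measures with a uniform $p$-integrability (moment) bound to obtain pre-compactness of $\{c_m(t)\}$ in $L^p((0,1),M)$ modulo $\Aut((0,1))$, then upgrade to genuine pre-compactness by a measurable selection of optimal couplings. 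This is the delicate step and I would spell it out carefully, as it is exactly what fails without local compactness of $M$.

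Once a cluster point $c\in C([0,1],L^p((0,1),M))$ is extracted along a subsequence $m\to\infty$, the remainder parallels \nmb!{3.6} verbatim. The identity $\proj\circ c=\bar c$ follows because $\proj(c_m(k/m))=\bar c(k/m)$ on a set of dyadic points that becomes dense and $\proj$ is continuous. Finally $c$ is minimizing: its length is at most $\liminf_m \operatorname{length}(c_m)$ by lower semicontinuity of length under uniform convergence, and $\operatorname{length}(c_m)\to \bar d_p(\bar c(0),\bar c(1))\cdot(\text{constant speed})$ by construction, while $\proj\circ c=\bar c$ forces $\operatorname{length}(c)\ge \operatorname{length}(\bar c)=\bar d_p(\bar c(0),\bar c(1))$; equality then shows $c$ is a minimizing geodesic projecting to $\bar c$, which is the desired horizontal lift.
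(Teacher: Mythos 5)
The paper does not carry out the construction you propose: its proof of \nmb!{4.10} is a one-line reference to \cite[Corollary~7.22]{villani2008optimal}, where the horizontal lift is obtained as a \emph{displacement interpolation} --- a probability measure $\Pi$ on the space of minimizing geodesics of $M$ with $(e_t)_*\Pi=\bar c(t)$, realized as a random variable on $(0,1)$ and then identified as a geodesic of $L^p((0,1),M)$ via \nmb!{4.4}. Your attempt to instead transplant the Arzel\`a--Ascoli argument of \nmb!{3.6} correctly locates the crux (pointwise pre-compactness of $\{c_m(t)\}_m$ fails, since $L^p((0,1),M)$ is not locally compact), but the proposed repair does not close the gap. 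Pre-compactness of the laws $\proj(c_m(t))$ in $\probspace^p(M)$ --- which is all that tightness plus uniform $p$-integrability gives you --- says nothing about pre-compactness of the random variables themselves: the fiber $\proj\i(P)$ over a single law $P$ is a non-compact $\Aut((0,1))$-orbit closure, and a sequence of lifts of a convergent sequence of laws can fail to have any convergent subsequence in $L^p((0,1),M)$ (already for $M=\mathbb R$, take $y_m = y\circ\si_m$ for mixing $\si_m\in\Aut((0,1))$). ``Upgrading by a measurable selection of optimal couplings'' is not an argument here, because the obstruction is precisely that your inductive choice of geodesic-to-fiber segments fixes the lifts before you know they cohere across $m$.

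The missing idea is to move the compactness to the level of probability measures on the path space $C([0,1],M)$: glue the optimal couplings of $(\bar c(k/m),\bar c((k+1)/m))$ into a measure $\Pi_m$ on piecewise-geodesic paths, use the Hopf--Rinow theorem \nmb!{8.2} on $M$ (not on $L^p((0,1),M)$) to get tightness of $\{\Pi_m\}$ via Prokhorov, extract a weak limit $\Pi$ concentrated on constant-speed minimizing geodesics, and only then realize $\Pi$ on $(0,1)$ and invoke \nmb!{4.4}. Note that this changes the logical structure of your final step as well: you no longer obtain a cluster point of the specific curves $c_m$ in $C([0,1],L^p((0,1),M))$, only the existence of \emph{some} lift with the required properties (which is all the theorem asserts), and the lower-semicontinuity-of-length argument must be run for weak convergence of $\Pi_m$ rather than uniform convergence of $c_m$. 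As written, your proof has a genuine gap at exactly the step you flagged as delicate.
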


\begin{proof}
This is proven in \cite[Corollary~7.22]{villani2008optimal} along the same lines as \nmb!{3.6}, i.e., using Lagrangian energy-action pairs. The horizontal lift is called displacement interpolation there.
\end{proof}

Skeleta and orbit-type strata of finite sample spaces $M^n/\S_n$ were defined in \nmb!{2.1} and \nmb!{2.7}, respectively.
Via the isometry \nmb!{4.7} to atomic probability distributions and the isometric embedding \nmb!{4.8} into $p$-integrable probability distributions, one obtains straight-forward extensions to skeleta and orbit-type strata of infinite sample spaces, as defined next.

\begin{definition}[Infinite skeleta and orbit-type strata]
\nmb.{4.11}
For any $q \in \mathbb N$, the \emph{$q$-skeleton} in the infinite-sample space $\probspace^p(M)$ is the subset $\probspace(M)_q$ of all probability distributions whose support is a set of at most $q$ points.
Similarly, for any partition $\partition{w} \coloneqq (w_1\geq\cdots\geq w_q)$ of 1 consisting of non-negative real numbers $w_i$ summing up to $1$, the \emph{$\partition{w}$-stratum} in the infinite-sample space $\probspace^p(M)$ is the subset of all $P=\sum_{i=1}^q w_i \delta_{x_i} \in \probspace(M)_q$ with distinct points $x_i$.
The measure $P$ is called \emph{regular} if the points $x_i$ are distinct and the weights $w_i$ are strictly positive.
\end{definition}

\section{Means and polymeans}\nmb0{5}

In this section, we generalize Fr\'echet means \cite{frechet_les_1948} and $k$-means \cite{macqueen_methods_1967} to \emph{polymeans} using the path-metric structure of sample space. 
Background and further references on Fr\'echet means can be found in the textbook \cite{pennec_sommer_fletcher_2020}.
Throughout this section, we consider the configuration space $(M^n,d_p)$ and sample space $(M^n/\S_n,\bar d_p)$ of a connected complete path-metric space $(M,d)$ for some $n \in \mathbb N$ and $p \in [1,\infty)$.
The following definition introduces polymeans as metric projections onto certain subsets of sample space $M^n/\S_n$, namely $q$-skeleta $(M^n/\S_n)_q$ (see \nmb!{2.2}) or $\partition{k}$-strata $(M^n/\S_n)_{\partition{k}}$ (see \nmb!{2.8}).

\begin{definition}[Polymeans]
\nmb.{5.1}
For any $q \in \mathbb N$, a \emph{$q$-mean} of a sample is a $\bar d_p$-nearest point in the $q$-skeleton of sample space. 
Similarly, for any partition $\partition{k}$ of $n$, a \emph{$\partition{k}$-mean} of a sample is a $\bar d_p$-nearest point in the closure of the $\partition{k}$-stratum. 
\end{definition}

Recall that the $q$-skeleton is closed, and the closure of the $\partition{k}$-stratum is the union of all $\partition{k'}$-strata with $\partition{k'}\leq\partition{k}$.
This ensures the existence of $q$-means and $\partition{k}$-means, as shown next. 
One should be aware that a $q$-mean might consist of less than $q$ distinct points, and similarly a $\partition{k}$-mean might have orbit type $\partition{k'}$ with $\partition{k'}\leq\partition{k}$.

\begin{lemma}[Existence of polymeans]
\nmb.{5.2}
If $M$ is a complete locally compact path-metric space, 
then every sample $\bar x\in M^n/\S_n$ has a $q$-mean and a $\partition{k}$-mean, for each $q \in \mathbb N_{>0}$ and orbit type $\partition{k}\coloneqq(k_1\ge \dots\ge k_q)$.
\end{lemma}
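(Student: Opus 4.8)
The plan is to derive both existence statements from the general principle that in a \emph{proper} metric space---one in which every closed bounded set is compact---the metric projection onto any nonempty closed subset is nonempty. Since the $q$-skeleton is closed and contains the diagonal (the $1$-skeleton, with $(n)\le\partition{k}$ for every $\partition{k}$), and the closure of the $\partition{k}$-stratum is closed and nonempty, it suffices to show that $(M^n/\S_n,\bar d_p)$ is proper and then run the direct method of the calculus of variations.

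To establish properness, I would first recall that $(M^n/\S_n,\bar d_p)$ is a complete path-metric space by \nmb!{3.2}. The remaining ingredient is local compactness. Since $M$ is locally compact, so is the finite product $M^n$; the projection $\proj\colon M^n\to M^n/\S_n$ is an open, continuous, closed surjection with finite fibers (the $\S_n$-orbits), and mapping a compact neighborhood of any lift forward along $\proj$ produces a compact neighborhood downstairs. Hence $M^n/\S_n$ is locally compact. A complete, locally compact path-metric space is proper by the Hopf--Rinow theorem \nmb!{8.2}; in particular, every closed $\bar d_p$-ball in $M^n/\S_n$ is compact.

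With properness in hand, I would finish by the direct method. Writing $A\subset M^n/\S_n$ for either the $q$-skeleton or the closure of the $\partition{k}$-stratum---both closed (using \nmb!{2.10}) and nonempty---I would set $\delta\coloneqq\inf_{\bar y\in A}\bar d_p(\bar x,\bar y)$ and take a minimizing sequence $\bar y_m\in A$ with $\bar d_p(\bar x,\bar y_m)\to\delta$. Eventually these lie in the closed ball of radius $\delta+1$ about $\bar x$, which is compact, so a subsequence converges to some $\bar y$; since $A$ is closed, $\bar y\in A$, and continuity of $\bar d_p(\bar x,\cdot)$ gives $\bar d_p(\bar x,\bar y)=\delta$. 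Thus $\bar y$ realizes the projection and is the sought $q$-mean (resp.\ $\partition{k}$-mean).

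The only substantive step is the properness claim, and within it the passage of local compactness to the quotient. This is exactly where the local compactness hypothesis on $M$ is used, and it cannot be dropped: in non-locally-compact (e.g.\ infinite-dimensional) settings, bounded minimizing sequences need not subconverge and nearest points on closed sets may fail to exist.
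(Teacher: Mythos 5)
Your proof is correct and follows essentially the same route as the paper: both arguments reduce existence to the Hopf--Rinow theorem \nmb!{8.2} applied to the complete locally compact path-metric space $(M^n/\S_n,\bar d_p)$, giving compactness of the intersection of a closed ball with the closed target set ($q$-skeleton or closure of the $\partition{k}$-stratum) and hence a nearest point. You merely spell out two details the paper leaves implicit---that local compactness passes to the quotient and the minimizing-sequence step---which is fine.
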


\begin{proof} 
For sufficiently large $r>0$, the closed ball $B_r(\bar x)$ has non-empty intersection with the $q$-skeleton. 
By the Hopf--Rinow theorem \nmb!{8.2}, this intersection is compact and therefore contains a point of minimal $\bar d_p$-distance to $\bar x$. 
The argument for the $\partition{k}$-stratum is similar.
\end{proof}

Generic configurations have unique polymeans, as shown next. 
Here generic is understood in a measure-theoretic sense, i.e., up to null sets with respect to a given Riemannian volume form.

\begin{lemma}[Uniqueness of polymeans]
\nmb.{5.3}
Let $M$ be a complete finite-dimensional Riemannian manifold, and assume that $p=2$.
Then the configurations $x\in M^n$ such that $\proj(x)$ has more than one $q$-mean or more than one $\partition{k}$-mean are a null set with respect to the Riemannian volume form. 
\end{lemma}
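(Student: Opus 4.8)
The plan is to work upstairs in the configuration space $M^n$, where for $p=2$ the metric $d_2$ is the geodesic distance of the complete product Riemannian metric $\frac1n(g\oplus\dots\oplus g)$, and to reduce the statement to the classical fact that metric projection onto a closed subset of a Riemannian manifold is almost everywhere single-valued. Let $S_q\subseteq M^n$ be the set of configurations with at most $q$ distinct points; by \nmb!{2.2} it is closed and $\S_n$-invariant, and the $q$-skeleton of sample space is $\proj(S_q)$. Writing $\bar x=\proj(x)$ and $N(x)\coloneqq\argmin_{y\in S_q}d_2(x,y)$ (nonempty by \nmb!{5.2}), the $\S_n$-invariance of $S_q$ together with the quotient-metric formula \nmb!{3.2} gives $\bar d_2(\bar x,\proj(S_q))=d_2(x,S_q)$, and one checks directly that the set of $q$-means of $\bar x$ equals $\proj(N(x))$ (a point $\proj(y)$ with $y\in S_q$ realizes the minimal distance on $\proj(S_q)$ iff some permutation $y_\sigma$ lies in $N(x)$). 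In particular $\bar x$ has a unique $q$-mean whenever $N(x)$ is a singleton, so the set of configurations with more than one $q$-mean is contained in $\{x\in M^n:|N(x)|\geq 2\}$, and it suffices to show that this set is null for the Riemannian volume, equivalently Lebesgue-null in every chart.

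The core step I would carry out is to show that the distance function $f(x)\coloneqq d_2(x,S_q)$ determines $N(x)$ uniquely wherever it is differentiable. Since $f$ is $1$-Lipschitz for $d_2$, it is locally Lipschitz in charts and hence differentiable off a null set by Rademacher's theorem; here finite-dimensionality of $M$ is used. Fix $x$ with $f(x)=r>0$ at which $f$ is differentiable, pick $y\in N(x)$, and let $\gamma\colon[0,r]\to M^n$ be a unit-speed minimizing geodesic with $\gamma(0)=y$, $\gamma(r)=x$. From $f(\gamma(r-t))\leq d_2(\gamma(r-t),y)=r-t$ and the reverse bound $f(\gamma(r-t))\geq f(x)-t=r-t$ coming from $1$-Lipschitzness, one gets $f(\gamma(s))=s$ for $s$ near $r$, so differentiating along $\gamma$ yields $\langle\grad f(x),\gamma'(r)\rangle=1$. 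As $\|\grad f(x)\|\leq 1$ and $\|\gamma'(r)\|=1$, equality in Cauchy--Schwarz forces $\grad f(x)=\gamma'(r)$, whence $y=\exp_x(-r\,\grad f(x))$ is independent of the choice of $y\in N(x)$; thus $N(x)$ is a singleton. On $S_q$ itself (where $f=0$, since $S_q$ is closed) the nearest point is $x$. Hence $\{x:|N(x)|\geq 2\}$ lies in the null set where $f$ fails to be differentiable, which proves the claim for $q$-means. For $q\geq n$ one has $S_q=M^n$ and the $q$-mean is trivially $\bar x$.

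The argument for $\partition{k}$-means I would run verbatim with $A\coloneqq\closure{(M^n)_{\partition{k}}}$ in place of $S_q$: by \nmb!{2.10} this set is closed and $\S_n$-invariant (a union of lower strata), it is contained in $S_q$ for $q$ the length of $\partition{k}$, and the set of $\partition{k}$-means of $\bar x$ is again $\proj(N_A(x))$ with $N_A(x)$ the nearest-point set in $A$; the distance-function argument then applies unchanged. When $\partition{k}=(1\geq\dots\geq 1)$ the closure is all of $M^n$ and the $\partition{k}$-mean is trivially $\bar x$.

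The only delicate point, and the main obstacle, is the almost-everywhere single-valuedness of the metric projection; the subtlety to watch is that one may \emph{not} assume the minimizing geodesic from $x$ to a foot point $y$ is unique, since $x$ can lie in the cut locus of $y$. The gradient computation circumvents this, because $\grad f(x)$ is intrinsic to $f$ and pins down $y=\exp_x(-r\,\grad f(x))$ regardless of how many geodesics realize the distance. Two structural features are essential and mark exactly where a generalization would fail: finite-dimensionality, through Rademacher's theorem, and $p=2$, through the identification of $d_2$ with a genuine Riemannian distance for which $\exp$ is globally defined and the above duality between $\grad f$ and geodesic velocity holds.
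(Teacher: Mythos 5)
Your proof is correct, and its overall strategy coincides with the paper's: both pass to the configuration space $(M^n,d_2)$ viewed as a complete Riemannian manifold, identify the configurations with non-unique polymeans with a subset of the medial axis of the closed, $\S_n$-invariant skeleton $K\subseteq M^n$, and dispose of that medial axis by showing it lies in the non-differentiability set of the distance function to $K$. Where you genuinely differ is in how that set is shown to be null. The paper cites Mantegazza--Mennucci: at a medial-axis point the squared distance to $K$ is non-differentiable, and the non-differentiability set of the squared distance to a closed set is $C^2$-rectifiable, hence null. You instead give a self-contained argument: Rademacher applied in charts to the $1$-Lipschitz function $f=d_2(\cdot,K)$ gives a.e.\ total differentiability, and your first-variation computation ($\langle\grad f(x),\gamma'(r)\rangle=1$ together with $\|\grad f(x)\|\le 1$ and Cauchy--Schwarz forces $\grad f(x)=\gamma'(r)$, hence $y=\exp_x(-r\,\grad f(x))$) shows that differentiability alone pins down the foot point, without assuming uniqueness of the minimizing geodesic. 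This buys elementarity and independence from the regularity literature --- Rademacher suffices, whereas $C^2$-rectifiability is far stronger than the null-set conclusion requires --- at the cost of the finer structural information the citation provides. You also spell out a step the paper leaves implicit: that the $q$-means of $\proj(x)$ are exactly $\proj(N(x))$ for the upstairs nearest-point set $N(x)$, via $\S_n$-invariance of $K$ and the quotient-metric formula of \nmb!{3.2}, so that distinct polymeans downstairs force distinct foot points upstairs. That reduction is worth recording explicitly.
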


\begin{proof}
We consider $M^n$ as a complete Riemannian manifold with Riemannian distance $d_2$. 
Let $K$ be the $q$-skeleton or the $\partition{k}$-stratum in $M^n$, 
and let $C$ be the set of all points in $M^n$ whose distance to $K$ is realized by more than one geodesic (sometimes called the medial axis). 
At any point in $C$, the squared distance function to $K$ is non-differentiable \cite[Remark~3.6]{mantegazza2002hamilton}.
These points of non-differentiability constitute a $C^2$-rectifiable set \cite[Proposition~3.7]{mantegazza2002hamilton}.
Thus, its subset $C$ has vanishing measure. 
\end{proof}

We next show that the definition of polymeans extends the definition of Fr\'echet $p$-means.

\begin{example}[Fr\'echet means]
\nmb.{5.4}
Fr\'echet means correspond exactly to $1$-means or, equivalently, $(n)$-means, where $(n)$ denotes the trivial partition. 
\end{example}

\begin{proof}
Recall that the $1$-skeleton in sample space $M^n/\S_n$ consists of all $\bar y = \proj(y,\dots,y)$ with $y \in M$ and coincides with the orbit-type stratum $(M^n/\S_n)_{(n)}$, where $(n)$ denotes the partition of $n$ of length $1$. 
Thus, $1$-means coincide with $(n)$-means and minimize, for a given $\bar x=\proj(x)$ in $M^n/\S_n$, the functional
\begin{align*}
\bar d_p(\bar x,\bar y)
=
\left(\frac1n\sum_{i=1}^n d(x_i,y)^p\right)^{1/p}
\end{align*}
over all $\bar y=\proj(y,\dots,y)$ in the $1$-skeleton of $M^n/\S_n$. 
Minimizers of the right-hand side, seen as a function of $y \in M$, are exactly Fr\'echet means.
Thus, a point $y \in M$ is a Fr\'echet mean of a configuration $x \in M^n$ if and only if the sample $\proj((y,\dots,y))\in M^n/\S_n$ is a $1$-mean, or equivalently an $(n)$-mean, of $\proj(x)\in M^n/\S_n$.
\end{proof}

$k$-mean clustering remains a very popular method in cluster analysis, more than 60 years after \cite{macqueen_methods_1967,jain_data_2010}. Like the Fr\'echet $p$-mean, it can be generalized with the power $p$ of the distance \cite{xu_power_2019}. We show below that this corresponds to our geometric definition of polymeans. 

\begin{example}[$k$-means]
\nmb.{5.5}
$q$-means correspond exactly to $k$-means clustering for $k=q\in\mathbb N$.
\end{example}

\begin{proof}
Let $\bar x, \bar y \in M^n/\S_n$ with $\bar y$ belonging to the $q$-skeleton. 
Then there are lifts $x,y \in M^n$ such that $\proj(x)=\bar x$, $\proj(y)=\bar y$, and $d_p(x,y)=\bar d_p(\bar x,\bar y)$.
The set $\{1,\dots,n\}$ can be partitioned into non-empty subsets $A_1,\dots,A_q$ such that $y_i=y_j$ for any $i,j\in S_k$ and $k \in \{1,\dots,q\}$.
Then
\begin{align*}
n \bar d_p(\bar x,\bar y)^p
=
\sum_{i=1}^q 
\sum_{j\in A_i} d(x_j,y_i)^p.
\end{align*}
The left-hand side is minimized by $q$-means $\bar y$, and the right-hand side is minimized by partitions $A_1,\dots,A_k$ and $k$-means $(y_1,\dots,y_k)$ with $k=q$.
Therefore, the $q$-mean and $k$-mean problems are equivalent. 
As an aside, the $q$-mean vector $\bar y$ does not encode the optimal correspondence between points $x_i$ and $y_i$, and the $k$-mean vector $(y_1,\dots,y_k)$ does not encode the multiplicities $\#A_i$. 
However, this information can be retrieved easily by matching each point $x_j$ to the nearest point $y_i$.
\end{proof}

\begin{definition}[Clusters]
\nmb.{5.6}
A \emph{clustering} of a sample $\bar x \in M^n/\S_n$ is a representation $\bar x = \bar x_1 \sqcup \cdots \sqcup \bar x_q\coloneqq\proj((x_1,\dots,x_q))$, where $\bar x_i=\proj(x_i) \in M^{k_i}/\S_{k_i}$ for some partition $k_1+\dots+k_q=n$ with $k_i \in \mathbb N_{>0}$ and $q \in \mathbb N_{>0}$.
In this situation, $\bar x_i$ are called \emph{clusters} or \emph{sub-samples} of sizes $k_i$.
\end{definition}

\begin{lemma}[Polymeans as clusters]
\nmb.{5.7}
If $\bar y$ is a $q$-mean of $\bar x$, then there are clusterings $\bar x=\bar x_1\sqcup\cdots\sqcup\bar x_q$ and $\bar y=\bar y_1\sqcup\cdots\sqcup\bar y_q$ such that each $\bar y_i$ is a $1$-mean of $\bar x_i$.
Moreover, if $\bar y$ is a $\partition{k}$-mean of $\bar x$ with $\partition{k}\coloneqq(k_1\geq\dots\geq k_q)$, then the partition can be chosen such that each cluster $\bar x_i$ has size $k_i$.
\end{lemma}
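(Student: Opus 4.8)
The plan is to pass to optimally aligned lifts and then read off the clusters from the block structure of the mean. By the quotient-metric formula \nmb!{3.2}, I would choose lifts $x,y\in M^n$ with $\proj(x)=\bar x$, $\proj(y)=\bar y$ and $d_p(x,y)=\bar d_p(\bar x,\bar y)$, i.e.\ an alignment realizing the minimum over permutations. For the $q$-mean the lift $y$ takes at most $q$ distinct values; for the $\partition{k}$-mean the multiplicity pattern of $y$ is some $\partition{k'}\le\partition{k}$ by \nmb!{2.10}, so each block of $y$ is a union of blocks of the finer pattern $\partition{k}$, and I may subdivide the index set $\{1,\dots,n\}$ into blocks $A_1,\dots,A_q$ of sizes exactly $k_1,\dots,k_q$ on each of which $y$ is constant. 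In the $q$-mean case I analogously refine the level sets of $y$ into exactly $q$ nonempty blocks $A_1,\dots,A_q$ (using $q\le n$). Writing $y^{(i)}$ for the common value of $y$ on $A_i$, I set $\bar x_i\coloneqq\proj\big((x_j)_{j\in A_i}\big)$ and $\bar y_i\coloneqq\proj\big((y^{(i)},\dots,y^{(i)})\big)$, so that $\bar x=\bar x_1\sqcup\cdots\sqcup\bar x_q$ and $\bar y=\bar y_1\sqcup\cdots\sqcup\bar y_q$ in the sense of \nmb!{5.6}, with $|A_i|=k_i$ in the $\partition{k}$ case.

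With this partition the cost splits additively,
\begin{equation*}
n\,\bar d_p(\bar x,\bar y)^p=\sum_{i=1}^q\sum_{j\in A_i}d(x_j,y^{(i)})^p,
\end{equation*}
exactly as in the computation of \nmb!{5.5}. It then remains to show that each $y^{(i)}$ minimizes $z\mapsto\sum_{j\in A_i}d(x_j,z)^p$ over $z\in M$, for then $\bar y_i$ is a Fr\'echet mean of the block $\bar x_i$ and hence, by \nmb!{5.4}, a $1$-mean of $\bar x_i$.

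The heart of the proof is this last minimality, which I would establish by contradiction. Suppose some $z\in M$ satisfies $\sum_{j\in A_i}d(x_j,z)^p<\sum_{j\in A_i}d(x_j,y^{(i)})^p$. Replace the value $y^{(i)}$ on the block $A_i$ by $z$, keeping all other values and all block sizes unchanged, to obtain a new configuration $\tilde y\in M^n$. The decisive point is that $\proj(\tilde y)$ is still feasible: in the $\partition{k}$ case $\tilde y$ still has a partition into blocks of sizes $k_1,\dots,k_q$ with constant values, so $\proj(\tilde y)$ lies in the closure of the $\partition{k}$-stratum by \nmb!{2.10}; in the $q$-mean case $\tilde y$ has at most $q$ distinct values (changing one block's value to $z$ alters the number of distinct values by at most one, and the refinement was chosen so that this stays $\le q$), so $\proj(\tilde y)$ lies in the $q$-skeleton. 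Since $\bar d_p(\bar x,\proj(\tilde y))\le d_p(x,\tilde y)$ and the replacement strictly lowers the additive cost, I obtain $\bar d_p(\bar x,\proj(\tilde y))<\bar d_p(\bar x,\bar y)$, contradicting that $\bar y$ is a $q$-mean (resp.\ $\partition{k}$-mean) of $\bar x$.

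I expect the main obstacle to be the feasibility bookkeeping in degenerate cases, where several blocks $A_i$ carry the same value $y^{(i)}$, so that $\bar y$ has orbit type strictly below the nominal $\partition{k}$ or uses fewer than $q$ distinct points. The subtle point is that a shared value need \emph{not} be the Fr\'echet mean of each individual sub-block; what rescues the argument is precisely that separating such a sub-block and moving its value keeps the configuration inside the relevant feasible set (the $\partition{k}$-closure or the $q$-skeleton), so the assumed optimality of $\bar y$ already forces the per-block minimality. Verifying that the refinements into exactly $q$ blocks of the prescribed sizes exist, and that the distinct-value count stays $\le q$ under a single-block perturbation, is the only place requiring care.
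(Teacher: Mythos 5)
Your proposal is correct and follows essentially the same route as the paper: optimally aligned lifts via the quotient-metric formula, the block partition on which $y$ is constant taken from the proof of the $k$-means example, and the additive splitting of the cost. The paper's proof leaves the per-block minimality implicit, whereas you spell out the exchange argument and the feasibility bookkeeping in the degenerate cases (shared block values, orbit type strictly below the nominal one); this is a welcome completion of the same argument rather than a different approach.
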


\begin{proof}
Let $A_1,\dots,A_q$ be a partition of $\{1,\dots,n\}$ as in the proof of \nmb!{5.6}.
Then the clusterings $\bar x_i=\proj((x_j)_{j\in A_i})$ and $\bar y_i=\proj((y_j)_{j\in A_i})$ have the desired property.
\end{proof}

Lemma~\nmb!{5.7} exhibits polymeans  as \emph{weighted means}, where the weights correspond to the cluster sizes, normalized by the total number of samples. 
The same interpretation is obtained by identifying polymeans with atomic measures via \nmb!{4.7}.
In some situations it may be advantageous to consider \emph{unweighted polymeans}, which encode only the locations but not the weights of the clusters.
The following definition describes $q$ such clusters located at mutually distinct points $y_1,\dots,y_q \in M$.
Recall that the ensemble of such mutually distinct point configurations modulo permutations is the regular stratum $(M^q/\S_q)_{\text{reg}}$.

\begin{definition}[Unweighted $q$-means]
\nmb.{5.8}
For any $q\in\mathbb N$, an \emph{unweighted $q$-mean} of a sample $\bar x=\proj(x) \in M^n/\S_n$ is a regular $q$-sample $\bar z \in (M^q/\S_q)_{\text{reg}}$ which minimizes the functional
\begin{equation*}
(M^q/\S_q)_{\text{reg}}
\ni 
\bar z = \proj(z)
\mapsto
\sum_{i=1}^n \min_{j\in\{1,\dots,q\}} d(x_i,z_j)^p.
\end{equation*}
\end{definition}

Unweighted $q$-means may fail to exist for a given $q\in\mathbb N_{>0}$ because the regular stratum $(M^q/\S_q)_{\text{reg}}$ is not closed.  
It is, however, open and dense.  
Thus, for any given $q \in \mathbb N_{>0}$, there always exists an unweighted $q'$-mean with $q'\leq q$.
The definitions of weighted and unweighted polymeans are consistent with each other in the following sense.

\begin{lemma}[Relation between weighted and unweighted $q$-means]
\label{5.relation}
Let $\bar x \in M^n/\S_n$, 
and let $z_1,\dots,z_q$ be distinct points in $M$. 
Then $\proj(z_1,\dots,z_q) \in (M^q/\S_q)_{\text{reg}}$ is an unweighted $q$-mean of $\bar x$ if and only if
\begin{equation*}
\proj((\underbrace{z_1,\dots,z_1}_{k_1\text{ times}},\dots,\underbrace{z_q,\dots,z_q}_{k_q\text{ times}})) \in M^n/\S_n
\end{equation*}
is a $q$-mean of $\bar x$ for some integer weights $k_i$ summing up to $n$.
\end{lemma}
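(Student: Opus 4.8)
The plan is to reduce both notions to a single continuous functional on $M^q$ and then to exploit the identity already recorded in Example~\nmb!{5.5}. Writing $\bar x=\proj(x_1,\dots,x_n)$, I introduce $f\colon M^q\to\mathbb R$ by $f(z_1,\dots,z_q)\coloneqq\sum_{i=1}^n\min_{j}d(x_i,z_j)^p$. This is exactly the functional of Definition~\nmb!{5.8}, and it is continuous since $d$ is. By Definition~\nmb!{5.8}, the tuple $\proj(z_1,\dots,z_q)$ is an unweighted $q$-mean precisely when the distinct entries $z_1,\dots,z_q$ minimize $f$ among all configurations with pairwise distinct components.

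First I would record the two inequalities that drive everything. For any sample $\bar y$ in the $q$-skeleton with underlying points $z_1,\dots,z_q$, the description of $\bar d_p$ in \nmb!{3.2} shows that every matching assigns each $x_i$ to one of the $z_j$ while respecting the multiplicities of $\bar y$; comparing each matched distance to the nearest-centroid distance gives the lower bound $n\,\bar d_p(\bar x,\bar y)^p\ge f(z_1,\dots,z_q)$. Conversely, assigning each $x_i$ to a nearest $z_j$ produces a valid matching whose cluster sizes $k_j\ge 0$ are the resulting multiplicities, and for the sample $\bar y$ built with exactly these weights this matching realizes the cost $\tfrac1n f(z_1,\dots,z_q)$, yielding equality $n\,\bar d_p(\bar x,\bar y)^p=f(z_1,\dots,z_q)$. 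Together with Example~\nmb!{5.5}, which identifies the minimal value of $n\,\bar d_p(\bar x,\cdot)^p$ over the $q$-skeleton with $\min_{M^q}f$, these relations are the whole engine.

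For the forward implication I would take distinct $z_1,\dots,z_q$ realizing an unweighted $q$-mean, form $\bar y\coloneqq\proj(z_1,\dots,z_1,\dots,z_q,\dots,z_q)$ with the nearest-neighbor multiplicities $k_j$, and invoke the equality above to get $n\,\bar d_p(\bar x,\bar y)^p=f(z_1,\dots,z_q)$. It then remains only to see that this common value is $\min_{M^q}f$, which makes $\bar y$ a $q$-mean. For the reverse implication I would take integer weights for which $\bar y$ is a $q$-mean, apply the lower bound to obtain $f(z_1,\dots,z_q)\le n\,\bar d_p(\bar x,\bar y)^p=\min_{M^q}f$, and conclude $f(z_1,\dots,z_q)=\min_{M^q}f$, so that the distinct tuple minimizes $f$ and hence is an unweighted $q$-mean.

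The main obstacle is that an unweighted $q$-mean minimizes $f$ only over the \emph{open} regular stratum of pairwise distinct tuples, whereas $q$-means correspond, via Example~\nmb!{5.5}, to minimizing $f$ over all of $M^q$; a priori these two minima could differ, since the regular stratum is not closed. I would resolve this with the density argument already flagged after Definition~\nmb!{5.8}: because $M$ is a connected path-metric space with more than one point, pairwise-distinct tuples are dense in $M^q$, and continuity of $f$ forces $\inf_{\text{distinct}}f=\inf_{M^q}f$. Thus whenever either minimum is attained -- which is exactly the hypothesis on each side of the equivalence -- it equals the global minimum of $f$, closing both directions. A minor bookkeeping point is that a weight $k_j=0$ merely places $\bar y$ in a lower skeleton while preserving the value of $f$, so the clause ``for some integer weights summing up to $n$'' may be read with nonnegative weights without affecting the conclusion.
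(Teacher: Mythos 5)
Your proof is correct, and it is essentially the argument the paper has in mind: the paper's own proof is just ``this easily follows from the definitions,'' and your two inequalities relating $n\,\bar d_p(\bar x,\bar y)^p$ to the functional $f$ are exactly the unwinding of those definitions via Example~\nmb!{5.5}. The one genuinely non-definitional ingredient you supply --- that $\inf f$ over the open regular stratum equals $\inf f$ over all of $M^q$ by density and continuity, so that attainment on either side forces the common global minimum --- is precisely the point the paper glosses over, and your handling of it (including the $k_j=0$ bookkeeping) is sound.
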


\begin{proof}
This easily follows from the definitions. 
\end{proof}

Skeleta and orbit-type strata in infinite sample space $\probspace^p(M)$ were defined in \nmb!{4.10}.
This yields the following straight-forward extensions to polymeans of infinite samples.

\begin{definition}[Population polymeans]
\nmb.{5.9}
A \emph{population $q$-mean} of an infinite sample $P \in \probspace^p(M)$ is a $\bar d_p$-nearest point in the $q$-skeleton of $\probspace^p(M)$. 
Similarly, for any partition $\partition{k} \coloneqq (k_1\geq\cdots\geq k_q)$ consisting of non-negative real numbers $k_i$ summing up to $1$, a \emph{population $\partition{k}$-mean} of $P \in \probspace^p(M)$ is a $\bar d_p$-closest point in the $\partition{k}$-stratum of $\probspace^p(M)$.
Moreover, an \emph{unweighted population $q$-mean} of $P \in \probspace^p(M)$ is a $\bar d_p$-closest point in the regular stratum of $\probspace_q(M)$.
\end{definition}

\section{Random samples}
\nmb0{6}
\label{sec:probability_distributions}

Throughout this section, we consider the configuration space $(M^n,d_p)$ and sample space $(M^n/\S_n,\bar d_p)$ of a separable complete path-metric space $(M,d)$ for some $n \in \mathbb N$ and $p \in [1,\infty)$.
We use the letter $\mathcal P$ to designate probability distributions. 
Thus, $\probspace(M^n/\S_n)$ is the set of probability distributions on sample space, 
and $\probspace(M^n)$ is the set of all probability distributions on configuration space. 
Moreover, we write $\probspace(M^n)_{S_n}$ for the subset of \emph{symmetric} probability distributions, where symmetry means $\S_n$-invariance.   

\begin{lemma}[Distributions of samples]
\nmb.{6.1}
Probability distributions on sample space $M^n/\S_n$ correspond exactly to symmetric probability distributions on configuration space $M^n$. 
\end{lemma}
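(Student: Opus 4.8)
The plan is to exhibit an explicit bijection implemented by push-forward along the projection $\proj\colon M^n\to M^n/\S_n$, together with an inverse given by averaging over orbits. In one direction, I would send a symmetric distribution $Q\in\probspace(M^n)_{\S_n}$ to its push-forward $\proj_*Q\in\probspace(M^n/\S_n)$; this is manifestly well-defined, since the push-forward of a probability measure is a probability measure. For the reverse direction I would use the \emph{uniform orbit measure}: for $\bar x\in M^n/\S_n$ with any representative $x\in\proj\i(\bar x)$, set $\mu_{\bar x}\coloneqq\frac1{n!}\sum_{\si\in\S_n}\delta_{x_\si}$, which is independent of the chosen representative and is itself $\S_n$-invariant. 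Given $P\in\probspace(M^n/\S_n)$ I would then define $T(P)\coloneqq\int_{M^n/\S_n}\mu_{\bar x}\,dP(\bar x)$, that is, $T(P)(A)=\int\mu_{\bar x}(A)\,dP(\bar x)$ for Borel $A\subseteq M^n$.

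The remaining verifications are routine. Since each $\mu_{\bar x}$ is symmetric, $T(P)$ is symmetric. Because $\mu_{\bar x}$ is supported on the single orbit $\proj\i(\bar x)$, one has $\mu_{\bar x}(\proj\i(B))=\mathbbm 1_B(\bar x)$ for Borel $B\subseteq M^n/\S_n$, whence $\proj_*T(P)(B)=\int\mu_{\bar x}(\proj\i(B))\,dP=P(B)$; thus $\proj_*\circ T=\Id$. For the opposite composition I would symmetrize test functions: for bounded Borel $f\colon M^n\to\R$ let $\bar f(x)\coloneqq\frac1{n!}\sum_\si f(x_\si)$, an $\S_n$-invariant function descending to $M^n/\S_n$, and note $\int f\,d\mu_{\bar x}=\bar f(\bar x)$. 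For symmetric $Q$, invariance gives $\int f\,dQ=\int\bar f\,dQ=\int\bar f\,d(\proj_*Q)$, the last step because $\bar f$ is invariant, while $\int\bar f\,d(\proj_*Q)=\int\big(\int f\,d\mu_{\bar x}\big)\,d(\proj_*Q)=\int f\,dT(\proj_*Q)$. As $f$ ranges over bounded Borel functions this yields $Q=T(\proj_*Q)$, so $T\circ\proj_*=\Id$ as well, and $\proj_*$ is the desired bijection.

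The one genuine technical point, which I expect to be the main obstacle, is measurability: I must know that $\bar x\mapsto\mu_{\bar x}$ is a probability kernel, so that $T(P)$ is an honest Borel measure. This is equivalent to producing a Borel section $s\colon M^n/\S_n\to M^n$ of $\proj$, since then $\mu_{\bar x}=\frac1{n!}\sum_\si\delta_{s(\bar x)_\si}$ depends measurably on $\bar x$. Such a section exists because $M^n$ is Polish and $\proj$ is the quotient by the action of the finite group $\S_n$: the orbit equivalence relation $\bigcup_{\si}\mathrm{graph}(\si)$ is closed with finite classes, so a Borel selector exists by the Lusin--Novikov theorem (equivalently, one constructs a Borel fundamental domain). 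Once this measurability is secured, all the identities above are immediate, and the correspondence $Q\leftrightarrow\proj_*Q$ between $\probspace(M^n)_{\S_n}$ and $\probspace(M^n/\S_n)$ is established.
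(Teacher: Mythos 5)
Your proposal is correct and follows essentially the same route as the paper: push-forward along $\proj$ in one direction, and the uniform orbit kernel $\bar x\mapsto\frac{1}{n!}\sum_{\si}\delta_{x_\si}$ integrated against $P$ in the other, with both compositions verified by the same symmetrization computation. Your explicit treatment of the kernel's measurability via a Borel section (Lusin--Novikov) is a point the paper's proof leaves implicit, but it does not change the argument.
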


\begin{proof}
We claim that the projection from configuration onto sample space induces a bijection
\begin{equation*}
\probspace(M^n)_{\S_n}\ni P \mapsto \proj_*P \in \probspace(M^n/\S_n).
\end{equation*}
To prove the claim, we will construct an inverse of this map by randomization over the $\S_n$-orbit using the probability kernel
\begin{align*}
K\colon M^n\ni x\mapsto\frac{1}{n!}\sum_{\sigma \in \S_n}\delta_{x_\sigma} \in \probspace(M^n)_{\S_n}.
\end{align*}
This kernel is $\S_n$-invariant and consequently descends to a probability kernel
\begin{equation}
\tag{1}
\bar K\colon M^n/\S_n\ni \bar x=\proj(x)\mapsto\frac{1}{n!}\sum_{\sigma \in \S_n}\delta_{x_\sigma} \in \probspace(M^n)_{\S_n},
\end{equation}
which maps samples $\bar x$ to uniform distributions on their fibers $\proj^{-1}(x)$ in configuration space. 
The two kernels are related by $K=\bar K\o\proj$.
For any probability distribution $\bar P$ on $M^n/\S_n$, we write $\int \bar K(\bar x)\bar P(\d\bar x)$ for the composition of the kernel $\bar K$ with the probability distribution $\bar P$. 
Formally, this is a measure-valued Pettis integral.
Then the map
\begin{equation}
\tag{2}
\probspace(M^n/\S_n) \ni \bar P \mapsto \int \bar K(\bar x)\bar P(\d\bar x)\in \probspace(M^n)_{S_n}
\end{equation}
is an inverse to the map $\proj_*$ because
\begin{align*}
\proj_*\int\bar K(\bar x)\bar P(\d\bar x)
&=
\int \proj_*\big(\bar K(\bar x)\big)\bar P(\d\bar x)
=
\int \delta_{\bar x}\bar P(\d\bar x)
=
\bar P, 
\\
\int\bar K(\bar x)(\proj_*P)(\d\bar x)
&=
\int\bar K\big(\proj(x)\big)P(\d x)
=
\int K(x) P(\d x)
\\&=
\frac{1}{n!}\sum_{\sigma\in\S_n}\int\delta_{x_\sigma} P(\d x)
=
\frac{1}{n!}\sum_{\sigma\in\S_n}(r_\sigma)_*P
=
P,
\end{align*}
where $r_\sigma\colon M^n\ni x \mapsto x_\sigma \in M^n$ is the action of the permutation $\sigma$ on the configuration space, and where the last equality follows from the symmetry of $P$.
\end{proof}

Hewitt and Savage \cite[Section~12]{hewitt_symmetric_1955} characterized the set of extremal points within the convex set of symmetric probability distributions on $M^n$, for short, \emph{extremal distributions}.
Moreover, they proved that every symmetric probability distribution is a mixture of extremal distributions and called such mixtures \emph{presentable}. 
As a corollary to Lemma~\nmb!{6.1}, one obtains an elementary proof of these facts.
The more widely studied case of infinite configurations is discussed in \nmb!{6.3} and \nmb!{6.4}.

\begin{corollary}[Finite Hewitt--Savage theorem]
\nmb.{6.2}
The extremal points in the convex set $\probspace(M^n)_{\S_n}$ of symmetric distributions are exactly of the form 
$\frac{1}{n!}\sum_{\sigma\in\S_n}\delta_{x_\sigma}$, $x \in M^n$. 
Moreover, all symmetric probability distributions on $M^n$ are presentable.
\end{corollary}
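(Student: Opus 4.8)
The plan is to leverage the bijection of \nmb!{6.1} to reduce both assertions to the elementary description of extremal points within the convex set of \emph{all} probability distributions on a space. The key observation is that the map $\proj_*\colon\probspace(M^n)_{\S_n}\to\probspace(M^n/\S_n)$ of \nmb!{6.1} is not merely a bijection but an \emph{affine} isomorphism of convex sets: push-forward is linear, and its inverse $\bar P\mapsto\int\bar K(\bar x)\,\bar P(\d\bar x)$ is affine as well, since integration against the fixed kernel $\bar K$ commutes with convex combinations. Because affine isomorphisms carry extremal points bijectively to extremal points, it suffices to identify the extremal points of $\probspace(M^n/\S_n)$ and transport them back through the inverse map.

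Next I would invoke the standard fact that the extremal points of the set of all probability distributions on a standard Borel space are precisely the Dirac measures: Diracs are trivially extremal, and any non-Dirac measure $\mu$ admits a Borel set $A$ with $0<\mu(A)<1$, giving the nontrivial splitting $\mu=\mu(A)\mu_A+(1-\mu(A))\mu_{A^c}$ into its conditional measures. This applies here because $M^n/\S_n$, equipped with the complete separable metric $\bar d_p$ from \nmb!{3.2}, is Polish and hence standard Borel. Transporting $\delta_{\bar x}$ through the inverse map then yields $\int\bar K(\bar y)\,\delta_{\bar x}(\d\bar y)=\bar K(\bar x)=\frac1{n!}\sum_{\sigma\in\S_n}\delta_{x_\sigma}$, which is exactly the claimed form of the extremal symmetric distributions.

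Finally, for presentability, I would write an arbitrary distribution $\bar P\in\probspace(M^n/\S_n)$ as the tautological mixture $\bar P=\int\delta_{\bar x}\,\bar P(\d\bar x)$ of its own Dirac components and apply the affine inverse map, obtaining the associated symmetric distribution as the mixture $P=\int\bar K(\bar x)\,\bar P(\d\bar x)$ of the extremal distributions $\bar K(\bar x)$; hence every symmetric distribution is presentable. The main obstacle I anticipate is not the convexity argument, which is essentially formal, but the measure-theoretic bookkeeping: one must confirm that the measure-valued (Pettis) integral from \nmb!{6.1} genuinely commutes with the affine inverse map, so that the formal mixture represents $P$ honestly, and that the Polish structure of $M^n/\S_n$ indeed rules out exotic $\{0,1\}$-valued measures that fail to be Diracs.
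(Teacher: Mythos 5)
Your proposal is correct and follows essentially the same route as the paper: both exploit that the bijection of \nmb!{6.1} is affine (linear), hence carries extremal points to extremal points, identify the extremal points of $\probspace(M^n/\S_n)$ as the Dirac measures, and push $\delta_{\bar x}$ through the inverse kernel map to obtain $\frac{1}{n!}\sum_{\sigma\in\S_n}\delta_{x_\sigma}$, with presentability following from surjectivity of the mixture map. Your additional care about the Polish structure ruling out non-Dirac $\{0,1\}$-valued measures and about the Pettis integral respecting convex combinations merely makes explicit what the paper leaves implicit.
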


\begin{proof}
The map (\nmb!{6.1}.2) is a linear bijection and therefore maps extremal points in its domain to extremal points in its range. 
The extremal points in the domain are easily identified as the Dirac measures. 
The image of a Dirac measure $\delta_{\bar x}$ with $\bar x =\proj(x)\in M^n/\S_n$ is the distribution $\frac{1}{n!}\sum_{\sigma\in\S_n}\delta_{x_\sigma}$.
The range of the map (\nmb!{6.1}.2) consists of mixtures of such distributions, i.e., presentable distributions. 
Moreover, as (\nmb!{6.1}.2) is surjective, all symmetric distributions are presentable.
\end{proof}

The following lemma characterizes distributions of infinite samples, thereby generalizing the corresponding result \nmb!{6.1} for finite samples. 
The full permutation group $S_{\mathbb N}$ of the natural numbers is too large for our purpose.
Instead, we consider the \emph{infinite permutation group} $\S_{(\mathbb N)}\coloneqq \bigcup_{n\in\mathbb N}\S_n$, which acts upon the \emph{infinite configuration space} $M^{\mathbb N}\coloneqq \prod_{n\in\mathbb N}M$.
A probability distribution on $M^{\mathbb N}$ is called \emph{symmetric} if it is $\S_{(\mathbb N)}$-invariant, and the set of symmetric distributions is denoted by $\probspace(M^{\mathbb N})_{\S_{(\mathbb N)}}$. 
The correct space of \emph{infinite samples}, which leads to a generalization of \nmb!{6.1}, is not the quotient space $M^{\mathbb N}/\S_{(\mathbb N)}$, but the space $\probspace(M)$. 
This is demonstrated in Example~\nmb!{6.5} and is in line with the limiting result \nmb!{4.8}.
\begin{lemma}[Distributions of infinite samples]
\nmb.{6.3}
Probability distributions on the infinite sample space $\probspace(M)$ correspond exactly to symmetric probability distributions on the configuration space $M^{\mathbb N}$. 
\end{lemma}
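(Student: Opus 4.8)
The plan is to establish a bijection between $\probspace(M^{\mathbb N})_{\S_{(\mathbb N)}}$ and probability distributions on $\probspace(M)$ by mimicking the construction in the proof of \nmb!{6.1}, but with the de~Finetti--Hewitt--Savage representation theorem replacing the explicit finite averaging kernel. The key conceptual shift is that, by the Hewitt--Savage theorem for infinite exchangeable sequences, the extremal symmetric distributions on $M^{\mathbb N}$ are exactly the i.i.d.\ product measures $Q^{\otimes\mathbb N}$ for $Q \in \probspace(M)$, and every symmetric distribution on $M^{\mathbb N}$ is a unique mixture of such products. This mixing measure is itself a probability distribution on $\probspace(M)$, and the assignment of the mixing measure to the symmetric distribution is precisely the correspondence we want to promote to a bijection.

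First I would fix the measurable structure: equip $\probspace(M)$ with the Borel $\sigma$-algebra of the weak topology (which is standard Borel since $M$ is separable and complete), so that the map $\probspace(M)\ni Q \mapsto Q^{\otimes\mathbb N}\in\probspace(M^{\mathbb N})_{\S_{(\mathbb N)}}$ is measurable. The forward map then sends a distribution $\mu$ on $\probspace(M)$ to the mixture $P_\mu \coloneqq \int_{\probspace(M)} Q^{\otimes\mathbb N}\,\mu(\d Q)$, understood as a measure-valued Pettis integral exactly as in \nmb!{6.1}. That $P_\mu$ is symmetric is immediate because each $Q^{\otimes\mathbb N}$ is $\S_{(\mathbb N)}$-invariant and invariance is preserved under mixing.

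Next I would invert this map using the empirical-law functional. For a symmetric $P\in\probspace(M^{\mathbb N})_{\S_{(\mathbb N)}}$, the strong law of large numbers along exchangeable sequences guarantees that the empirical measures $\frac1n\sum_{i=1}^n\delta_{x_i}$ converge weakly $P$-almost surely to a random limit $L(x)\in\probspace(M)$; the induced law $L_*P$ on $\probspace(M)$ is then the candidate inverse. The two compositions then unwind as in \nmb!{6.1}: applying $L_*$ to a product mixture recovers the mixing measure because the empirical measures of an i.i.d.\ $Q$-sample converge to $Q$ almost surely (this is the infinite-sample analogue of the computation $\proj_*\int\bar K\bar P = \bar P$), while mixing the product measures indexed by $L(x)$ against $L_*P$ reconstructs $P$ by the uniqueness part of the de~Finetti representation (the analogue of the identity $\int\bar K(\proj_*P) = P$).

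The main obstacle I expect is the careful justification of the measurability and well-definedness underlying the limit functional $L$ and the Pettis integral, and in particular the uniqueness of the mixing measure, which is what makes $L_*$ a genuine two-sided inverse rather than merely a right inverse. Concretely, one must verify that $L$ is defined on a $P$-conull measurable set for every symmetric $P$ simultaneously in a measurable way, and that two mixing measures producing the same symmetric distribution must coincide --- this is exactly the content of the Hewitt--Savage uniqueness statement and should be invoked as the substitute for the elementary ``extremal points are Dirac measures'' argument available in the finite case. Once measurability and uniqueness are secured, both round-trip identities follow by the same linear-bijection reasoning used in \nmb!{6.1} and \nmb!{6.2}.
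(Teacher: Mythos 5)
Your proposal is correct and follows essentially the same route as the paper: the forward map is the mixture $\mu\mapsto\int_{\probspace(M)}Q^{\otimes\mathbb N}\,\mu(\d Q)$, the inverse is the push-forward under the almost-everywhere-defined empirical-law functional, surjectivity comes from the infinite Hewitt--Savage theorem \nmb!{6.4}, and the key round-trip identity is the law-of-large-numbers computation showing that the empirical laws of a $Q^{\otimes\mathbb N}$-sample recover $Q$. The only cosmetic difference is that you invoke the uniqueness part of the de~Finetti representation for the second composition, whereas the paper obtains it for free from surjectivity together with the existence of a left inverse.
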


\begin{proof}
For some fixed point $o \in M$, define a projection from infinite configuration space to infinite sample space as follows:
\begin{align*}
\proj\colon M^{\mathbb N}\to \probspace(M), 
&&
\proj(x)\coloneqq
\left\{
\begin{aligned}
&\lim_{n\to\infty}\frac1n\sum_{i=1}^n \delta_{x_i},
&&\text{if the weak limit exists,}
\\
&\delta_o, 
&&\text{otherwise,}
\end{aligned}
\right.
\end{align*}
The push-forward along this projection restricts to the following map from symmetric distributions to probability distributions on infinite sample space $\probspace(M)$:
\begin{align*}
\proj_*\colon \probspace(M^{\mathbb N})_{\S_{(\mathbb N)}} \to \probspace(\probspace(M)).
\end{align*}
We claim that the map $\proj_*$ is an inverse of the map
\begin{align*}
\probspace(\probspace(M)) \ni Q \mapsto \int_{\probspace(M)}P^{\mathbb N}\ Q(\d P) \in \probspace(M^{\mathbb N})_{\S_{(\mathbb N)}},
\end{align*}
where $P^{\mathbb N}\coloneqq \bigotimes_{n\in\mathbb N}P$ denotes the product distribution on $M^{\mathbb N}$, and where the integral is a measure-valued Pettis integral.
Note that the distributions on the right-hand side are laws of conditionally \iid{} sequences of $M$-valued random variables.
To prove the claim, we appeal to the infinite-sample version \nmb!{6.4} of the Hewitt--Savage theorem, which states that symmetric distributions coincide exactly with presentable distributions, i.e., with Pettis integrals as above.
For any $P \in \probspace(M)$, the weak law of large numbers implies $\proj(x)=P$ for $P^{\mathbb N}$-almost every $x \in M^{\mathbb N}$.
This implies $\proj_*(P^{\mathbb N})=\delta_P$.
Consequently, every $Q \in \probspace(\probspace(M))$ satisfies
\begin{align*}
\proj_*\left(\int_{\probspace(M)}P^{\mathbb N}\ Q(\d P)\right)
=
\int_{\probspace(M)}\proj_*(P^{\mathbb N})\ Q(\d P)
=
\int_{\probspace(M)}\delta_P\ Q(\d P)
=
Q.
\end{align*} 
This proves the claim and establishes the desired one-to-one correspondence.
\end{proof}

The above proof uses the well-known Hewitt--Savage theorem \cite{hewitt_symmetric_1955}, which is a generalization of Corollary~\nmb!{6.2} to infinite sample spaces.
As before, \emph{presentable distributions} are defined as mixtures of \emph{extremal distributions}, i.e., of extremal points in the convex set of symmetric distributions. 

\begin{theorem}[Infinite Hewitt--Savage theorem {\cite{hewitt_symmetric_1955}}]
\nmb.{6.4}
The extremal points in the convex set $\probspace(M^{\mathbb N})_{\S_{(\mathbb N)}}$ of symmetric distributions are exactly the product distributions $P^{\mathbb N}\coloneqq\bigotimes_{n\in\mathbb N} P$ with $P \in \probspace(M)$.
Moreover, all symmetric distributions on $M^{\mathbb N}$ are presentable. 
\end{theorem}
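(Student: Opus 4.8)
The plan is to deduce both assertions from the de~Finetti representation $\mu=\int_{\probspace(M)}P^{\mathbb N}\,Q(\d P)$ for an arbitrary symmetric $\mu$, which I would establish by a reversed--martingale argument, and then read off presentability and the identification of extremal points. Fix $\mu\in\probspace(M^{\mathbb N})_{\S_{(\mathbb N)}}$. For each $n$ let $\mathcal E_n$ be the sub-$\sigma$-algebra of events invariant under permutations of the first $n$ coordinates, and put $\mathcal E\coloneqq\bigcap_n\mathcal E_n$, the exchangeable $\sigma$-algebra. The empirical measures $P_n(x)\coloneqq\frac1n\sum_{i=1}^n\delta_{x_i}$ are $\mathcal E_n$-measurable, and exchangeability of $\mu$ gives, for bounded continuous $f$, the identity $\EE_\mu[f(x_1)\mid\mathcal E_n]=\int f\,\d P_n$, so $(\int f\,\d P_n)_n$ is a reversed martingale along the decreasing filtration $(\mathcal E_n)_n$. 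The reversed martingale convergence theorem yields $\mu$-a.s.\ and $L^1$ convergence; running this over a countable convergence-determining family and using that $M$ (hence $\probspace(M)$ with the weak topology) is Polish produces a $\mu$-a.s.\ weak limit $P_\infty=\lim_n P_n$, an $\mathcal E$-measurable random element of $\probspace(M)$.

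The heart of the argument is to show that, conditionally on $\mathcal E$, the coordinates are \iid{} with law $P_\infty$, i.e.\ $\EE_\mu\big[\prod_{j=1}^k f_j(x_j)\mid\mathcal E\big]=\prod_{j=1}^k\int f_j\,\d P_\infty$ almost surely, for bounded continuous $f_1,\dots,f_k$. To obtain this I would compare two normalized sums over the first $n$ indices: the average over ordered tuples of \emph{distinct} indices, which is symmetric in the first $n$ coordinates and hence by exchangeability equals $\EE_\mu\big[\prod_j f_j(x_j)\mid\mathcal E_n\big]$, and the average over \emph{all} tuples, which factorizes as $\prod_j\int f_j\,\d P_n$. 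The two differ only by diagonal terms of order $O(1/n)$, so $\EE_\mu\big[\prod_j f_j(x_j)\mid\mathcal E_n\big]=\prod_j\int f_j\,\d P_n+O(1/n)$, and letting $n\to\infty$ with the first paragraph gives the claim. Integrating against $\mu$ then yields $\mu=\int_{\probspace(M)}P^{\mathbb N}\,Q(\d P)$ with $Q\coloneqq(P_\infty)_*\mu$, exhibiting every symmetric $\mu$ as a mixture of product measures.

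It remains to identify the extremal points. That each $P^{\mathbb N}$ is extremal follows from the Hewitt--Savage zero-one law: under a product measure every exchangeable event has probability $0$ or $1$, which I would prove by the classical approximation of an exchangeable event by a cylinder event on the first $m$ coordinates, applying a permutation that moves these past index $m$ and invoking independence to force $P^{\mathbb N}(A)=P^{\mathbb N}(A)^2$. Consequently, if $P^{\mathbb N}=\tfrac12(\mu_1+\mu_2)$ with $\mu_1,\mu_2$ symmetric, then $\mu_i\ll P^{\mathbb N}$ with an exchangeable, hence $P^{\mathbb N}$-a.s.\ constant, Radon--Nikodym density, so $\mu_1=\mu_2=P^{\mathbb N}$. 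Conversely, if $\mu$ is extremal, the representation above writes it as an average of the symmetric measures $P^{\mathbb N}$, so extremality forces the mixing measure $Q$ to be a Dirac mass $\delta_P$, whence $\mu=P^{\mathbb N}$; and since every symmetric $\mu$ is such a mixture, all symmetric distributions are presentable. The main obstacle is the conditional-\iid{} identity of the second paragraph: the reversed martingale convergence supplies the limiting empirical law cheaply, but upgrading convergence of single-coordinate averages to the full product factorization over $\mathcal E$, with uniform control of the diagonal corrections as $n\to\infty$, is where the genuine work lies.
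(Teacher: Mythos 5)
Your proof is essentially correct, but note that the paper does not prove Theorem \nmb!{6.4} at all: it is imported verbatim from Hewitt and Savage \cite{hewitt_symmetric_1955} (only the finite-sample analogue \nmb!{6.2} receives an original proof, via the kernel bijection of \nmb!{6.1}). What you supply is therefore a self-contained substitute for the citation, following the standard reversed-martingale route to de~Finetti: the identity $\EE_\mu[f(x_1)\mid\mathcal E_n]=\int f\,\d P_n$, backward martingale convergence along the decreasing filtration $(\mathcal E_n)$, the distinct-tuples versus all-tuples comparison with $O(1/n)$ diagonal corrections to get conditional independence given the exchangeable $\sigma$-algebra, and then the Hewitt--Savage zero-one law to identify the product measures as the extreme points. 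All the main steps are sound, and the standing assumption that $M$ is a separable complete (hence Polish) metric space is exactly what makes the regular-conditional-distribution and convergence-determining-class arguments legitimate; de~Finetti can fail without such an assumption, so it is worth saying explicitly that you use it. Two spots are compressed but repairable with standard arguments: (i) identifying the a.s.\ limit $P_\infty$ as a genuine random probability measure requires taking it to be a regular conditional distribution of $x_1$ given $\mathcal E$ (so that tightness is automatic), rather than defining it only through the limits $\lim_n\int f\,\d P_n$ over a countable class; (ii) in the converse extremality step, ``$Q$ must be Dirac'' needs the observation that for non-Dirac $Q$ one can choose a set $A$ with $0<Q(A)<1$ whose two conditional barycenters $\int_A P^{\mathbb N}Q(\d P)/Q(A)$ and $\int_{A^c}P^{\mathbb N}Q(\d P)/Q(A^c)$ are distinct symmetric measures, using injectivity of $P\mapsto P^{\mathbb N}$. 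Neither is a gap in the idea, only in the write-up.
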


This result is asymptotically consistent with its finite-sample counterpart \nmb!{6.2}.
Indeed, by the Diaconis--Freedman theorem \cite{diaconis_finite_1980} symmetric distributions on $M^n$ are close to mixtures of product distributions for large $n$.
More precisely, the total variation distance from $k$-dimensional marginal distributions of elements of $\probspace(M^n)_{\S_n}$ to mixtures of product distributions is at most $k(k-1)/n$.

The following example shows that the correspondence \nmb!{6.3} between probability distributions on sample space and symmetric probability distributions on configuration space fails if the sample space is defined as $M^{\mathbb N}/\S_{(\mathbb N)}$ instead of $\probspace(M)$. 

\begin{example}[Infinite sample space]
\nmb.{6.5}
There is a probability distribution on $M^{\mathbb N}/\S_{(\mathbb N)}$ which does not correspond to any symmetric probability distribution on $M^{\mathbb N}$.
\end{example}

\begin{proof}
In analogy to \nmb!{6.3} we say that a probability distribution $Q$ on $M^{\mathbb N}/\S_{(\mathbb N)}$ corresponds to a symmetric probability distribution $P$ on $M^{\mathbb N}$ if $Q=\proj_*P$, where $\proj\colon M^{\mathbb N}\to M^{\mathbb N}/\S_{(\mathbb N)}$ is the canonical projection. 
For any such $P$, the weak limit
$\lim_{n\to\infty}\frac1n\sum_{i=1}^n \delta_{x_i}$
exists for $P$-almost every $x \in M^{\mathbb N}$, as shown in the proof of \nmb!{6.3}.
Moreover, this limit is invariant under the action of $\S_{(\mathbb N)}$ on $M^{\mathbb N}$ because every permutation in $\S_{(\mathbb N)}$ affects only finitely many indices. 
Thus, if $Q$ corresponds to some $P$, then the limit $\lim_{n\to\infty}\frac1n\sum_{i=1}^n \delta_{\bar x_i}$ is well-defined and exists for $Q$-almost every $\bar x \in M^{\mathbb N}/\S_{(\mathbb N)}$.
However, it is easy to construct a distribution $Q$ on $M^{\mathbb N}/\S_{(\mathbb N)}$ which does not have this property. 
Indeed, assuming that $M$ contains at least two points, one may construct a sequence of points $x_i \in M$ such that $\frac1n\sum_{i=1}^n\delta_{x_i}$ does not converge weakly as $n\to\infty$. 
Then $Q\coloneqq \delta_{\bar x}$ with $\bar x\coloneqq\proj(x)$ is the desired counter-example.
\end{proof}

We next investigate random samples and random configurations.
For this purpose, we fix a probability space $(\Omega,\mathcal F, \mathbb P)$ on which all random variables are defined.
A \emph{random configuration} is a random variable in $M^n$ or $M^{\mathbb N}$, depending on the finite versus infinite case. 
Similarly, a \emph{random sample} is a random variable in $M^n/\S_n$ or $\probspace(M)$, respectively.
A random configuration is called \emph{exchangeable} if its law is symmetric, i.e., invariant under permutations in $\S_n$ or $\S_{(\mathbb N)}$, respectively.
The following characterization is analogous to \nmb!{6.1}--\nmb!{6.4}.

\begin{corollary}[Random configurations and samples]
\nmb.{6.6} 
Random samples correspond exactly (possibly after passing to an extended probability space) to exchangeable configurations, which in turn correspond exactly to conditionally \iid{} $M$-valued random variables.
This statement applies to finite and infinite configurations and samples, respectively.
\end{corollary}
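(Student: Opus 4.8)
The plan is to upgrade the distributional bijections of Lemmas~\nmb!{6.1} and~\nmb!{6.3} from the level of laws to the level of random variables, and to identify the extremal mixtures furnished by the Hewitt--Savage theorems~\nmb!{6.2} and~\nmb!{6.4} with a conditional-independence structure. I would treat the finite case ($M^n$, $M^n/\S_n$) and the infinite case ($M^{\mathbb N}$, $\probspace(M)$) in parallel, so that the two claimed correspondences compose to the full statement.

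For the first correspondence, the direction ``exchangeable configuration $\Rightarrow$ random sample'' needs nothing beyond projection: if $X$ is exchangeable then $\bar X\coloneqq\proj\o X$ is a random sample whose law is $\proj_*(\mathrm{law}\,X)$, matching $\mathrm{law}\,X$ under~\nmb!{6.1} (resp.~\nmb!{6.3}). For the reverse direction I would enlarge the probability space and apply the randomization kernel built in those proofs. In the finite case the fibre $\proj\i(\bar X)$ is the $\S_n$-orbit of any representative, hence finite; I fix a measurable section $s$ of $\proj$ (a measurable selection theorem applies as in~\nmb!{4.4}), adjoin an independent uniform $\sigma\in\S_n$ on $\Omega\x\S_n$, and set $X\coloneqq s(\bar X)_\sigma$. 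In the infinite case the relevant kernel from~\nmb!{6.3} is $P\mapsto P^{\mathbb N}$, so I draw $X$ conditionally as an \iid{} sequence from the random measure $\bar X$. In both cases $\proj\o X=\bar X$ and $\mathrm{law}\,X=\int\bar K(\bar x)\,(\mathrm{law}\,\bar X)(\d\bar x)$ is symmetric, so $X$ is exchangeable and corresponds to $\bar X$. Note that the infinite reverse construction already exhibits $X$ as conditionally \iid{}, anticipating the second correspondence.

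For ``exchangeable $\Leftrightarrow$ conditionally \iid{}'' I would read the conditional structure off the extremal decomposition. If $X$ is exchangeable, its law is presentable by~\nmb!{6.2} (finite) or~\nmb!{6.4} (infinite), i.e. a mixture $\int\mu_\theta\,Q(\d\theta)$ of extremal laws. I realize the mixing index as a random variable $\Theta$ defined directly from $X$: in the infinite case $\Theta\coloneqq\lim_n\frac1n\sum_{i=1}^n\delta_{X_i}$, which exists almost surely and is measurable by the argument of~\nmb!{6.3}, and the conditional law of $X$ given $\Theta=P$ is the product $P^{\mathbb N}$ --- precisely the assertion that $X$ is conditionally \iid{} given $\Theta$. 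In the finite case $\Theta\coloneqq\bar X$ and the conditional law is the orbit-uniform measure $\frac1{n!}\sum_{\sigma\in\S_n}\delta_{X_\sigma}$, the sampling-without-replacement analogue. Conversely, a conditionally \iid{} sequence has law $\int P^{\mathbb N}\,Q(\d P)$, a mixture of symmetric measures, hence symmetric, so the corresponding configuration is exchangeable; the same closure-under-mixing argument handles the finite orbit-uniform laws.

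The substance of the argument is measurability, not combinatorics. Two points require care. First, the reverse half of the first correspondence demands a jointly measurable realization of the fibrewise randomization, which is exactly what forces the passage to an extended probability space; the extension is unavoidable because a sample carries no information about the labelling within its fibre, and this is the origin of the parenthetical caveat in the statement. Second, in the infinite case one must check that $\Theta$ generates the exchangeable (equivalently tail) $\sigma$-algebra up to $\mathbb P$-null sets, so that conditioning on $\Theta$ really returns the product kernel $P^{\mathbb N}$; this is the random-variable form of de Finetti's theorem, which I would obtain by combining the almost-sure convergence of empirical measures with the distributional identity~\nmb!{6.4}. I would also make explicit that the clean ``conditionally \iid{}'' statement is literally the infinite-sample case, its finite counterpart being conditionally orbit-uniform.
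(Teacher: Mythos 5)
Your proposal is correct and follows essentially the same route as the paper, whose proof of \nmb!{6.6} is only a sketch instructing the reader to redo \nmb!{6.1}--\nmb!{6.4} at the level of random variables rather than laws, extending the probability space precisely to implement the random ordering from the proof of \nmb!{6.1} and the conditional \iid{} sampling from the proof of \nmb!{6.3} --- exactly the two constructions you carry out in detail. Your explicit remark that the finite-sample form of ``conditionally \iid{}'' is really ``conditionally orbit-uniform'' (sampling without replacement, per \nmb!{6.2}) is a useful clarification that the paper's statement leaves implicit.
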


\begin{proof}
This can be shown in analogy to \nmb!{6.1}--\nmb!{6.4}, working with random variables instead of their laws.
The extension of the probability space is necessary, unless the given probability space is already sufficiently rich, for implementing the random ordering in the proof of \nmb!{6.1} and the \iid{} sampling in the proof of \nmb!{6.3}.
\end{proof}

\section{Asymptotic properties of polymeans}
\nmb0{7}
\label{sec:asymptotics}
Polymeans, similar to Fr\'echet means \cite{huckemann_intrinsic_2011}, satisfy a law of large numbers and a central limit theorem under suitable conditions, as shown next.
We refer to \nmb!{5.1}, \nmb!{5.8}, and \nmb!{5.9} for their definition.
Throughout this section, 
$(M,d)$ is a separable complete connected path-metric space, 
$p \in [1,\infty)$, 
and $q\in\mathbb N_{>0}$.
The space $M$, as well topological products and quotients thereof, are endowed with the corresponding Borel sigma algebras. 
For some probability distribution $P\in\probspace^p(M)$, we consider a sequence of independent $P$-distributed random variables $(x_i)_{i\in\mathbb N}$ defined on a complete probability space $(\Omega,\mathcal F, \mathbb P)$. 
The corresponding $n$-samples are denoted by $\bar x_n\coloneqq\proj(x_1,\dots,x_n) \in M^n/\S_n$.
We write $\mu_n\subset(M^n/\S_n)_q$ for the set of $q$-means of $\bar x_n$, 
$\bar y_n\in(M^n/\S_n)_q$ for a measurable selection of $q$-means of $\bar x_n$, 
and $\bar z_n\in (M^q/\S_q)_{\text{reg}}$ for a measurable selection of unweighted $q$-means of $\bar x_n$.
It will be convenient to identify the samples $\bar x_n, \bar y_n,\bar z_n$ with their empirical laws $P_n,Q_n,R_n$, respectively, using the isometry \nmb!{4.7} between $M^n/\S_n$ and $\probspace_n(M)$. 
The population counterparts of the above empirical objects are denoted by $\mu_0$, $\bar y_0$, $\bar z_0$, $Q_0$, and $R_0$, respectively.
Note that all of these objects belong to one and the same path-metric space $\probspace^p(M)$ thanks to the isometric embedding \nmb!{4.8} of finite into infinite sample spaces.

\begin{definition}[Strong consistency {\cite{ziezold_expected_1977}}]
\nmb.{7.1}
The empirical $q$-means $\mu_n$ are called \emph{strongly consistent} estimators for the set $\mu_0$ of population $q$-means if 
\begin{equation*}
\mathbb P\left[\bigcap_{n=1}^\infty \overline{\bigcup_{k=n}^\infty\mu_k}\subseteq \mu_0\right]=1.
\end{equation*}
\end{definition}

Note that strong consistency is equivalent to the following statement: with probability 1, any accumulation point of the sets $\mu_n$ belongs to $\mu_0$.

\begin{lemma}[Strong consistency]
\nmb.{7.2}
The empirical $q$-means $\mu_n$ are strongly consistent estimators for the population $q$-means $\mu_0$.
\end{lemma}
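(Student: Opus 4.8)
The plan is to reduce strong consistency to the almost-sure convergence $\bar d_p(P_n,P)\to 0$ of the empirical laws toward $P$ in Wasserstein distance, and then to exploit that $q$-means are metric projections onto the closed $q$-skeleton. Write $f_n(Q)\coloneqq\bar d_p(P_n,Q)$ and $f_0(Q)\coloneqq\bar d_p(P,Q)$ for $Q$ in the $q$-skeleton $\probspace(M)_q$, so that $\mu_n$ and $\mu_0$ are precisely the minimizers of $f_n$ and $f_0$ over $\probspace(M)_q$. The key observation is that the triangle inequality for $\bar d_p$ yields $|f_n(Q)-f_0(Q)|\le \bar d_p(P_n,P)$ uniformly in $Q$, so that $f_n\to f_0$ uniformly on the entire $q$-skeleton as soon as $\bar d_p(P_n,P)\to 0$. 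This uniform bound, which comes for free from the metric structure, is what makes the limiting argument below completely elementary and frees it from any compactness or uniqueness hypothesis.

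First I would invoke the strong law of large numbers for empirical measures in Wasserstein distance: since $P\in\probspace^p(M)$ and $M$ is separable and complete, the empirical laws satisfy $\bar d_p(P_n,P)\to 0$ almost surely. Denote by $\Omega_0$ the corresponding event of full measure. It suffices to show that on $\Omega_0$ every accumulation point of the sequence of sets $\mu_n$ lies in $\mu_0$; this establishes the inclusion $\bigcap_{n}\overline{\bigcup_{k\ge n}\mu_k}\subseteq\mu_0$ pointwise on $\Omega_0$, and hence with probability one.

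So fix $\omega\in\Omega_0$ and let $\bar y$ be an accumulation point, realized as $\bar y=\lim_j \bar y_{n_j}$ with $\bar y_{n_j}\in\mu_{n_j}$. Since the $q$-skeleton $\probspace(M)_q$ is closed in $(\probspace^p(M),\bar d_p)$ and each $\bar y_{n_j}$ lies in it, we have $\bar y\in\probspace(M)_q$. For any competitor $Q\in\probspace(M)_q$, minimality of $\bar y_{n_j}$ gives $f_{n_j}(\bar y_{n_j})\le f_{n_j}(Q)$. On the right, $f_{n_j}(Q)\to f_0(Q)$ by the uniform bound; on the left, $|f_{n_j}(\bar y_{n_j})-f_0(\bar y_{n_j})|\le \bar d_p(P_{n_j},P)\to 0$ while $f_0(\bar y_{n_j})\to f_0(\bar y)$ because $f_0=\bar d_p(P,\cdot)$ is $1$-Lipschitz, so $f_{n_j}(\bar y_{n_j})\to f_0(\bar y)$. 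Passing to the limit yields $f_0(\bar y)\le f_0(Q)$ for all $Q\in\probspace(M)_q$, which together with $\bar y\in\probspace(M)_q$ says exactly that $\bar y$ is a population $q$-mean, i.e.\ $\bar y\in\mu_0$.

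The main obstacle is the input $\bar d_p(P_n,P)\to 0$ almost surely, that is, the strong law of large numbers for empirical measures in the $p$-Wasserstein metric on a general separable complete path-metric space; this is where the moment assumption $P\in\probspace^p(M)$ is essential and where one must cite (or reprove) the standard Wasserstein SLLN. Once this is granted, the remaining work is genuinely soft: the triangle inequality converts it into uniform convergence, closedness of the skeleton is already available from the existence discussion for polymeans, and the $\argmin$-stability argument needs nothing further. A secondary point deserving one sentence is measurability: rather than arguing that the Kuratowski limit-superior event is itself measurable, I would simply note that it contains the measurable full-measure event $\Omega_0$, so that the probability in \nmb!{7.1} is one.
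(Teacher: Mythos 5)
Your overall strategy---uniform convergence of the functionals $f_n=\bar d_p(P_n,\cdot)$ to $f_0=\bar d_p(P,\cdot)$ via the triangle inequality, followed by argmin-stability and closedness of the $q$-skeleton---is the same epi-convergence argument the paper runs (the paper phrases it as Gamma-convergence and cites the same Wasserstein convergence $\bar d_p(P_n,P)\to 0$ of empirical laws). But there is one unjustified step, and it sits exactly where the paper does its only real work. You assert that ``$\mu_n$ and $\mu_0$ are precisely the minimizers of $f_n$ and $f_0$ over $\probspace(M)_q$,'' and you then use minimality of $\bar y_{n_j}$ against an \emph{arbitrary} competitor $Q\in\probspace(M)_q$. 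That is not the definition: the empirical $q$-mean is a nearest point in the $q$-skeleton of the \emph{finite} sample space $(M^n/\S_n)_q\cong\probspace_n(M)_q$, i.e.\ among measures on at most $q$ points whose weights are multiples of $1/n$. A competitor $Q=\sum_i w_i\delta_{z_i}$ with weights not of this form is simply not in the feasible set of the empirical problem, so the inequality $f_{n_j}(\bar y_{n_j})\le f_{n_j}(Q)$ does not follow from the definition of $\mu_{n_j}$.

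The gap is fixable in either of two ways, and you should state one of them explicitly. (a) The paper's route: for each $Q\in\probspace(M)_q$ build a recovery sequence $Q_n\in\probspace_n(M)_q$ by rounding the weights to the nearest multiples of $1/n$, so that $\bar d_p(Q_n,Q)\to 0$ and hence $f_{n_j}(\bar y_{n_j})\le f_{n_j}(Q_{n_j})\to f_0(Q)$; this is precisely the $\limsup$ half of the Gamma-convergence that your uniform bound alone does not supply, since the uniform bound only controls the dependence on the first argument of $\bar d_p$. (b) Alternatively, observe that for an $n$-atomic $P_n$ the metric projection onto $\probspace(M)_q$ is already attained in $\probspace_n(M)_q$ (fix the $q$ support points and send each atom of $P_n$ to its nearest center, which forces weights in $\{0,1/n,\dots,1\}$), so that $\min_{\probspace_n(M)_q}f_n=\min_{\probspace(M)_q}f_n$ and your competitor step becomes legitimate. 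Everything else in your write-up---the a.s.\ Wasserstein SLLN as input, the $1$-Lipschitz continuity of $f_0$, closedness of the skeleton, and the measurability remark---is correct and consistent with the paper.
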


This statement is a consequence of the Gamma-convergence of the functionals which are minimized by $\mu_n$ and $\mu_0$, respectively, as shown in the following proof. 
A similar argument is used in \cite{ziezold_expected_1977} and \cite[Theorem~A.3]{huckemann_intrinsic_2011}. These proofs are longer because implications of Gamma-convergence are re-proven there.

\begin{proof}[Proof]
The empirical $q$-means $\mu_n$ are the minimizers of the functional
\begin{align*}
F_n\colon\probspace^p(M)_q&\to\mathbb R_+,  
&
F_n(Q)=
\left\{\begin{aligned}
&\bar d_p(P_n,Q),&&Q \in (M^n/\S_n)_q,
\\
&\infty,&&Q \notin (M^n/\S_n)_q.
\end{aligned}\right.
\end{align*}
Similarly, the population $q$-means $\mu$ are the minimizers of the functional
\begin{align*}
F\colon\probspace^p(M)_q&\to\mathbb R_+,  
&
F(Q)=\bar d_p(P,Q).
\end{align*}
The empirical laws $P_n$ converge to the population law $P$ in the Wasserstein metric $\bar d_p$ by \cite[Proposition~2.2.6]{panaretos2020invitation}.
We claim that this implies Gamma-convergence $F_n\to F$. 
To prove the claim, note that for any converging sequence $Q_n\to Q$ in $\probspace^p(M)_q$, 
\begin{equation*}
F(Q)
=
\bar d_p(P,Q)
=
\lim_{n\to\infty} \bar d_p(P_n,Q_n)
\leq 
\liminf_{n\to\infty} F_n(Q_n).
\end{equation*}
Moreover, any $Q\in\probspace^p(M)_q$ can be approximated in the $\bar d_p$-distance by a sequence $Q_n\in(M^n/\S_n)_q$. 
Indeed, $Q$ is of the form $Q=\sum_{i=1}^q w_i\delta_{x_i}$ for some $x_i \in M$ and $w_i\in[0,1]$, and the approximations $Q_n$ may be defined by rounding the weights to the nearest multiples of $1/n$.
For any such approximating sequence $Q_n\to Q$ one has
\begin{equation*}
F(Q)
=
\bar d_p(P,Q)
=
\lim_{n\to\infty} \bar d_p(P_n,Q_n)
= 
\lim_{n\to\infty} F_n(Q_n).
\end{equation*}
This proves that $F_n$ Gamma-converges to $F$. 
Thus, the accumulation points of $F_n$-minimizers are $F$-minimizers, which is exactly strong consistency. 
\end{proof}

If the empirical $q$-means are strongly consistent and the population $q$-mean is unique, then any measurable selection $Q_n$ of empirical $q$-means converges in probability to the population $q$-mean $Q_0$. 
In this situation one may inquire about the rate of convergence $Q_n\to Q_0$.
As an auxiliary first step, the following lemma shows that $Q_n$ possesses the same best-approximation property as $Q_0$, up to some error terms.
Controlling these error terms leads to the convergence rate established subsequently in \nmb!{7.4}.

\begin{lemma}[Error bound]
\nmb.{7.3}
Assume that $P \in \probspace^{2p}(M)$, 
let $Q_0 \in \probspace(M)_q$ be a $q$-mean of $P$, 
assume that $Q_0$ is distinct from $P$,
and for each $n \in \mathbb N$, let $Q_n\in \probspace_n(M)_q$ be a $q$-mean of the empirical law $P_n$.
Then
\begin{align*}
\bar d_p(P,Q_n) - \bar d_p(P,Q_0) 
\leq
\bar d_p(P_n,P) + O_{\mathbb P}(n^{-1/2}).
\end{align*} 
\end{lemma}

\begin{proof}
Let $K\colon M\to \probspace(M)$ be an optimal transport map from $P$ to $Q_0$, i.e., 
\begin{align*}
Q_0 &= \int_M K(x) P(\d x),
&
\bar d_p(P,Q_0)
&=
\left(\int_M\int_M d(x,y)^p K(x,\d y)P(\d x)\right)^{1/p}.
\end{align*}
Such a transport map can be obtained from an optimal coupling between $P$ and $Q_0$ via disintegration.
Then $K$ is also a transport map between $P_n$ and $\tilde Q_n$, where
\begin{equation*}
\tilde Q_n
\coloneqq
\int_M K(x) P_n(\d x) \in \probspace_n(M)_q. 
\end{equation*}
By the triangle inequality and the best-approximation property of the polymeans, 
\begin{align*}
\bar d_p(P,Q_n)
&\leq 
\bar d_p(P_n,Q_n)+\bar d_p(P_n,P)
\leq
\bar d_p(P_n,\tilde Q_n)+\bar d_p(P_n,P)
\\&\leq 
\left(\int_M\int_M d(x,y)^p K(x,\d y)P_n(\d x)\right)^{1/p}+\bar d_p(P_n,P).
\end{align*}
Rewriting the right-hand side using the defining properties of $K$ leads to the estimate
\begin{equation*}
\bar d_p(P,Q_n)
\leq 
\left(\bar d_p(P,Q_0)^p + \int_M\int_M d(x,y)^p K(x,\d y)(P_n-P)(\d x)\right)^{1/p}+\bar d_p(P_n,P).
\end{equation*}
By the central limit theorem, the random variables
\begin{equation*}
n^{1/2}\int_M\int_M d(x,y)^p K(x,\d y) (P_n-P)(\d x)
\end{equation*}
converge in distribution to a normal random variable. 
As $d_p(P,Q_0)>0$, this establishes the lemma.
The central limit theorem may be applied thanks to the square-integrability condition
\begin{align*}
\int_M\left(\int_M d(x,y)^p K(x,\d y)\right)^2 P(\d x)
&\leq 
\int_M\int_M d(x,y)^{2p} K(x,\d y) P(\d x)
\\&\leq 
\sum_{y \in \operatorname{supp}(Q_0)}\int_M d(x,y)^{2p} P(\d x) 
< 
\infty.
\qedhere
\end{align*} 
\end{proof}

The bound in \nmb!{7.3} involves the Wasserstein distance $\bar d_p(P_n,P)$ between a distribution $P$ and the empirical distribution $P_n$ of an $n$-sample, which is itself a random variable.  
On $M=\mathbb R^d$ it has been shown for distributions $P$ with sufficiently many moments that  $\|\bar d_p(P_n,P)\|_{L^p(\Om)}$ 
is of the order $n^{-1/\max\{d,2p\}}$, with an additional logarithmic factor if $d=2p$ \cite[Theorem~1]{fournier2015rate}.
This paper also gives references for improved rates under more stringent conditions on $P$.
The case of non-flat $M$ is largely open. 

Using Lemma~\nmb!{7.3}, the following theorem bounds the rate at which the empirical $q$-means $Q_n$ converge to the population $q$-mean $Q_0$. 
Besides the distance $\bar d_p(P_n,P)$, it also involves a real number $\alpha$, which quantifies the coercivity of the Wasserstein distance $\bar d_p(P,\cdot)$ near a minimizer $Q_0$ in the $q$-skeleton and depends on the subspace geometry of the $q$-skeleton within Wasserstein space. 

\begin{theorem}[Convergence rate]
\nmb.{7.4}
Let $P \in \probspace^{2p}(M)$, 
let $Q_n\in \probspace_n(M)_q$ be a sequence of $q$-means of $P_n$ converging in probability to a population $q$-mean  $Q_0\in\probspace(M)_q$,
and assume for some $\alpha>0$ and $c>0$ that
\begin{equation*}
\bar d_p(P,Q)-\bar d_p(P,Q_0)\geq c \bar d_p(Q,Q_0)^\alpha 
\end{equation*}
for all $Q \in (M^n/\S_n)_q$ near $Q_0$.
Then
\begin{align*}
\bar d_p(Q_n,Q_0)
=
O_{\mathbb P}(\bar d_p(P_n,P)^{1/\alpha})
+
O_{\mathbb P}(n^{-1/(2\alpha p)}).
\end{align*}
\end{theorem}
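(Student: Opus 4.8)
The plan is to sandwich the excess objective value $\bar d_p(P,Q_n) - \bar d_p(P,Q_0)$ between the upper bound supplied by Lemma~\nmb!{7.3} and the lower bound supplied by the assumed coercivity condition, and then to invert the resulting power inequality. This is the classical rate argument for $M$-estimators: a growth condition on the population objective near its minimizer converts a bound on the excess objective into a bound on the distance to the minimizer.

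First I would record that, since $Q_n\to Q_0$ in probability, there is a neighborhood $U$ of $Q_0$ in $\probspace^p(M)$ on which the coercivity hypothesis applies, and the events $E_n\coloneqq\{Q_n\in U\}$ satisfy $\mathbb P(E_n)\to 1$. On $E_n$, applying the coercivity condition with $Q=Q_n\in(M^n/\S_n)_q$ gives
\[
c\,\bar d_p(Q_n,Q_0)^\alpha \leq \bar d_p(P,Q_n)-\bar d_p(P,Q_0).
\]
Writing $R_n$ for the $O_{\mathbb P}(n^{-1/(2p)})$ remainder in Lemma~\nmb!{7.3}, which bounds the right-hand side by $\bar d_p(P_n,P)+R_n$, one obtains on $E_n$ that
\[
\bar d_p(Q_n,Q_0)^\alpha \leq c^{-1}\big(\bar d_p(P_n,P)+|R_n|\big).
\]

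Next I would take the $1/\alpha$-th power and split the power of a sum using the elementary inequality $(a+b)^{1/\alpha}\leq C_\alpha\big(a^{1/\alpha}+b^{1/\alpha}\big)$ for nonnegative $a,b$, valid with $C_\alpha=\max\{1,2^{1/\alpha-1}\}$ in both the concave regime $\alpha\geq 1$ and the convex regime $\alpha<1$. This yields, on $E_n$,
\[
\bar d_p(Q_n,Q_0) \leq c^{-1/\alpha}C_\alpha\big(\bar d_p(P_n,P)^{1/\alpha}+|R_n|^{1/\alpha}\big).
\]
Since $x\mapsto x^{1/\alpha}$ is continuous and monotone on $[0,\infty)$, the stochastic-order statement $R_n=O_{\mathbb P}(n^{-1/(2p)})$ transfers to $|R_n|^{1/\alpha}=O_{\mathbb P}(n^{-1/(2\alpha p)})$, producing exactly the two claimed terms; the restriction to $E_n$ is harmless for the $O_{\mathbb P}$ conclusion because $\mathbb P(E_n^c)\to 0$.

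The substantive content is already carried by Lemma~\nmb!{7.3} and the coercivity hypothesis, so the remaining work is essentially bookkeeping. The step requiring the most care will be conducting the whole estimate on the high-probability events $E_n$ and verifying that each $O_{\mathbb P}$ statement survives both the restriction to $E_n$ and the nonlinear transformation $x\mapsto x^{1/\alpha}$; in particular one must keep the $\alpha\geq 1$ and $\alpha<1$ cases straight when splitting the power of a sum. No analytic input beyond Lemma~\nmb!{7.3} is needed.
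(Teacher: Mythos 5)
Your proposal is correct and follows essentially the same route as the paper's proof: combine the lower bound from the coercivity hypothesis with the upper bound from Lemma~7.3 and take the $\alpha$-th root. The paper compresses the final step into ``taking the $\alpha$-th root,'' whereas you spell out the high-probability localization near $Q_0$ and the splitting inequality $(a+b)^{1/\alpha}\leq C_\alpha(a^{1/\alpha}+b^{1/\alpha})$; these are exactly the details the paper leaves implicit, and your treatment of them is accurate.
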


\begin{proof}
The error bound \nmb!{7.3} together with the assumption on the distance function imply that 
\begin{align*}
c\bar d_p(Q_n,Q_0)^\al 
&\leq
\bar d_p(P,Q_n)-\bar d_p(P,Q_0)
\leq
\bar d_p(P_n,P) + O_{\mathbb P}(n^{-1/(2p)}).
\end{align*}
Taking the $\al$-th root establishes the theorem.
\end{proof}

It remains open if weighted $q$-means are asymptotically normal after a suitable rescaling. 
However, we will answer this question affirmatively for unweighted $q$-means, defined in \nmb!{5.8}. 
Note that these are strongly consistent thanks to the strong consistency \nmb!{7.2} of weighted $q$-means. 

\begin{definition}[Asymptotic normality]
\nmb.{7.5}
Assume that $M$, and consequently also the regular stratum $(M^q/\S_q)_{\text{reg}}=\probspace_q(M)_{\text{reg}}$, is a manifold.
Fix a regular sample $R_0 \in \probspace_q(M)_{\text{reg}}$ and a symmetric bilinear form $\Sigma$ on the tangent space at $R_0$.
Then a sequence $R_1,R_2,\dots$ of random elements in $\probspace_q(M)$ is called \emph{asymptotically normal} with mean $R_0$ and covariance $\Sigma$ if for some (equivalently, every) coordinate chart $(U,u)$ around $R_0$, the sequence $\sqrt{n}\mathbbm{1}_U(R_n) u(R_n)$ converges in law to a normal distribution $\mathcal N(0,u_*(\Sigma))$.
\end{definition}

The chart independence in this definition is a consequence of the delta method \cite[Theorem~3.1]{vandervaart2000asymptotic}.
We then get the following asymptotic result.

\begin{theorem}[Asymptotic normality]
\nmb.{7.6}
Let $M$ be a manifold with Riemannian path metric $d$ and assume that conditions \thetag{1}--\thetag{2} in the proof below hold true.
Then any sequence $R_1,R_2,\dots$ of unweighted $q$-means of $P_1,P_2,\dots$, which converges in probability to a unique unweighted population $q$-mean $R_0$, is asymptotically normal. 
\end{theorem}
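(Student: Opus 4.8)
The plan is to recognise the unweighted $q$-mean as a classical $M$-estimator (equivalently a $Z$-estimator) on the manifold $(M^q/\S_q)_{\text{reg}}=\probspace_q(M)_{\text{reg}}$ and to run the standard estimating-equation/delta-method argument. By Definition~\nmb!{5.8}, writing $\bar z=\proj(z_1,\dots,z_q)$ for a point of the regular stratum and
\begin{equation*}
m_{\bar z}(y)\coloneqq \min_{j\in\{1,\dots,q\}} d(y,z_j)^p,
\end{equation*}
the estimator $R_n$ minimises the empirical criterion $P_n m_{\bar z}$ while the population $q$-mean $R_0$ minimises $P m_{\bar z}$. First I would fix a chart $(U,u)$ around $R_0$ and spell out the two hypotheses. \emph{Condition} \thetag{1}: $P$ assigns zero mass to the set of $y\in M$ at which $\bar z\mapsto m_{\bar z}(y)$ fails to be smooth near $R_0$, that is, to the union of the cut loci of the centroids of $R_0$, their \emph{medial axis} (points with a non-unique nearest centroid), and, when $p<2$, the centroids themselves. \emph{Condition} \thetag{2}: the expected Hessian $H\coloneqq\Hess_{\bar z}(P m_{\bar z})|_{R_0}$, read in the chart $u$, is non-degenerate. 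Throughout I would also use the integrability $P\in\probspace^{2p}(M)$ to guarantee the moment bounds below.

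Given \thetag{1}, for $P$-almost every $y$ the nearest centroid $z_{j(y)}$ is unique, and since strict membership in a Voronoi cell is an open condition, the assignment $j(y)$ stays constant for $\bar z$ in a neighbourhood of $R_0$; away from the cut locus of $z_{j(y)}$ the map $\bar z\mapsto m_{\bar z}(y)=d(y,z_{j(y)})^p$ is then smooth, with \emph{score}
\begin{equation*}
\psi_{\bar z}(y)\coloneqq \grad_{\bar z}\, m_{\bar z}(y)
\end{equation*}
supported on the block of the single nearest centroid. Because $R_0$ lies in the open regular stratum and $R_n\to R_0$ in probability, for large $n$ the estimator $R_n$ is an interior minimiser and hence solves the empirical normal equation $P_n\psi_{R_n}=0$, while $P\psi_{R_0}=0$.

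Next I would check the two analytic inputs of the $Z$-estimator central limit theorem \cite[Theorem~5.21]{vandervaart2000asymptotic}. Differentiability of $\bar z\mapsto P\psi_{\bar z}$ at $R_0$, with derivative the invertible matrix $H$, follows from \thetag{2} by differentiating under the integral sign, which is legitimate because $P\in\probspace^{2p}(M)$ dominates the relevant $d(y,z_j)^{p-2}$- and $d(y,z_j)^{p-1}$-type integrands near $R_0$. The stochastic-equicontinuity (local Donsker) condition for the score class $\{\psi_{\bar z}:\bar z\text{ near }R_0\}$ follows from the smooth, hence locally Lipschitz, dependence of $\psi_{\bar z}(y)$ on $\bar z$ together with a $P$-square-integrable Lipschitz envelope, again supplied by the moment hypothesis. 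With these inputs the usual linearisation gives, in the chart $u$,
\begin{equation*}
\sqrt n\, u(R_n)=-\,H^{-1}\sqrt n\,(P_n-P)\psi_{R_0}+o_{\mathbb P}(1),
\end{equation*}
and the multivariate central limit theorem yields $\sqrt n\,(P_n-P)\psi_{R_0}\rightsquigarrow\mathcal N\big(0,\Cov(\psi_{R_0}(x))\big)$, so that $\sqrt n\, u(R_n)\rightsquigarrow\mathcal N\big(0,H^{-1}\Cov(\psi_{R_0}(x))H^{-1}\big)$. Chart-independence of this conclusion is precisely the delta-method remark after Definition~\nmb!{7.5}, so $R_n$ is asymptotically normal with mean $R_0$ and covariance $\Sigma=H^{-1}\Cov(\psi_{R_0}(x))H^{-1}$.

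I expect the main obstacle to be the stochastic-equicontinuity step. The criterion $m_{\bar z}$ is only \emph{piecewise} smooth: the partition of $M$ into the Voronoi cells of $z_1,\dots,z_q$ moves with $\bar z$, and the non-smooth seam — the moving medial axis together with the cut loci — must be controlled uniformly for $\bar z$ near $R_0$, so that it carries negligible mass and the score envelope stays square-integrable and the empirical process over the score class is asymptotically tight. Condition \thetag{1} makes this seam $P$-null exactly at $R_0$; propagating it to a whole neighbourhood requires a continuity (transversality) argument showing that the $P$-mass of an $\varepsilon$-neighbourhood of the seam tends to $0$ as $\bar z\to R_0$. This is where the manifold structure and the absolute-continuity-type content of \thetag{1} do the real work.
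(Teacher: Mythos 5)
Your overall strategy is sound and lands on the same two hypotheses as the paper: a $P$-null condition on the set where $y\mapsto\min_j d(y,z_j)^p$ fails to be differentiable in $\bar z$ at $R_0$ (centroids, their cut loci, and the ``medial axis'' of points equidistant from two nearest centroids), and non-degeneracy of the expected Hessian. The difference is in how the empirical-process work is discharged. The paper does \emph{not} run the $Z$-estimator argument by hand: it casts the unweighted $q$-mean as a minimizer of $P_n\bar\rho$ for the Fr\'echet functional $\bar\rho(x,\bar z)=\min_j d(x,z_j)^p$ on the manifold $(M^q/\S_q)_{\text{reg}}$ and then verifies the hypotheses of the smeary CLT of Eltzner--Huckemann \cite[Theorem~11]{eltzner_smeary_2019} --- namely $P$-a.e.\ existence of the gradient of $\bar\rho(x,\cdot)$ at $\bar z_0$ (from condition \thetag{1}), non-degeneracy of the Hessian of $P\bar\rho$ (condition \thetag{2}), and a uniform-continuity estimate for $\bar\rho(x,\cdot)$ --- and cites that theorem for the conclusion. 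Your route via \cite[Theorem~5.21]{vandervaart2000asymptotic} is the more self-contained and more classical one; what the paper's route buys is precisely that the delicate local Donsker/stochastic-equicontinuity verification is absorbed into the cited theorem, whose hypotheses are checkable from \thetag{1}--\thetag{2} alone. Your sandwich covariance $H^{-1}\Cov(\psi_{R_0})H^{-1}$ matches, up to normalization conventions, the formula the paper records in Corollary~\nmb!{7.7}.

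The one genuine gap is the step you yourself flag at the end. For the $Z$-estimator theorem you need the score class $\{\psi_{\bar z}:\bar z\text{ near }R_0\}$ to be locally Donsker with a square-integrable Lipschitz envelope, and the obstruction is that the seam (the moving Voronoi boundaries together with the cut loci of the moving centroids) is only known to be $P$-null \emph{at} $\bar z=\bar z_0$ by condition \thetag{1}; you need the $P$-mass of an $\varepsilon$-neighbourhood of the seam to vanish as $\bar z\to\bar z_0$ and $\varepsilon\to0$ so that $\psi_{\bar z}(y)$ is, for $P$-a.e.\ $y$, actually Lipschitz in $\bar z$ on a full neighbourhood rather than merely differentiable at the single point $\bar z_0$. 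You state that this ``requires a continuity (transversality) argument'' but do not supply it, so as written the proposal is an outline rather than a proof at exactly the point where the analysis is hardest. This is not a wrong turn --- it is the same issue the paper sidesteps by invoking \cite[Theorem~11]{eltzner_smeary_2019}, which only demands the pointwise-at-$\bar z_0$ gradient condition plus the uniform continuity of $\bar\rho(x,\cdot)$ --- but to make your version stand on its own you would need either to prove the neighbourhood statement (e.g.\ by dominated convergence applied to $\mathbbm{1}_{\{d(y,z_i)-d(y,z_j)\le\varepsilon\}}$ and to tubular neighbourhoods of the cut loci, using that these sets decrease to the $P$-null seam of \thetag{1}) or to strengthen \thetag{1} to an explicit neighbourhood/absolute-continuity hypothesis.
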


\begin{proof}
As before, we use \nmb!{4.7} to identify the unweighted $q$-means $R_0,R_1,R_2,\ldots \in \probspace_q(M)_{\text{reg}}$ with the corresponding $q$-samples $\bar z_0,\bar z_1,\bar z_2,\ldots \in (M^q/\S_q)_{\text{reg}}$.
In the notation of \cite{eltzner_smeary_2019} and \cite{huckemann_intrinsic_2011}, and in line with Definition~\nmb!{5.8} of unweighted $q$-means, we define the Fr\'echet functional 
\begin{equation*}
\bar{\rho}\colon
M\times (M^q/\S_q)_{\text{reg}}
\ni (x,\bar z)
=
(x,\proj(z))
\mapsto
\min_{i\in\{1,\dots,q\}} d(x,z_i)^p.
\end{equation*}
Then the unweighted $q$-means $\bar z_n$ minimize the functional
\begin{equation*}
P_n\bar{\rho}\colon
(M^q/\S_q)_{\text{reg}}
\ni \bar z
\mapsto
\int_M\bar\rho(x,\bar z)P_n(\d x),
\end{equation*}
and the unweighted population $q$-mean $\bar z_0$ minimizes the functional
\begin{equation*}
P\bar{\rho}\colon
(M^q/\S_q)_{\text{reg}}
\ni \bar z
\mapsto
\int_M\bar\rho(x,\bar z)P(\d x).
\end{equation*}
To verify the conditions of \cite{eltzner_smeary_2019} we make the following assumptions: \\

\begin{enumerate}
\item 
The following sets have zero probability under $P$:
\begin{gather*}
\{\bar z_{0,1},\dots,\bar z_{0,q}\}, 
\quad 
\Cut(\bar z_{0,1})\cup\dots\cup\Cut(\bar z_{0,q}),
\\
\{x\in M: \exists i\neq j \in \{1,\dots,q\}:  d(x,\bar z_{0,i})=d(x,\bar z_{0,j})=\rho(x,\bar z_0)\}.
\end{gather*} 
\item
The function $P\bar{\rho}$ defined above has a non-degenerate Hessian at $\bar z_0$.

\end{enumerate}

Note that the first assumption guarantees for $P$-almost every $x \in M$ the existence of the Riemannian gradient of the function $\bar\rho(x,\cdot)$ at $\bar z_0$. 
Indeed, the only points $x$ where the gradient may fail to exist are the points $\bar z_{0,i}$, their cut loci $\Cut(\bar z_{0,i})$, and the locations which are closest to more than one $\bar z_{0,i}$.
A further condition of \cite{eltzner_smeary_2019} to be verified for all $x \in M$ is that the function $\bar{\rho}(x,\cdot)$ is uniformly continuous on bounded domains with respect to the metric $\bar d_p$ on $M^q/\S_q$. 
This follows from the estimate
\begin{align*}
|\bar{\rho}(x,\proj(z'))-\bar{\rho}(x,\proj(z))|
\leq 
\max_{i\in\{1,\dots,q\}} \min_{j\in\{1,\dots,q\}} d(z_i,z_j')^p
\leq 
q \bar d_p(\proj(z),\proj(z'))^p.
\end{align*}
Thus, we have verified the conditions of \cite[Theorem~11]{eltzner_smeary_2019}, 
and it follows that the sequence $\bar z_n$ or equivalently $R_n$ is asymptotically normal.
\end{proof}

The asymptotic normality of unweighted $q$-means generalizes from independent to exchangeable observations $x_1,x_2,\dots$ under certain conditions. 
Equivalently, as shown in \nmb!{6.6}, the observations can be seen as random elements in an infinite sample space.

\begin{corollary}[Asymptotic normality, exchangeable observations]
\nmb.{7.7}
Theorem~\nmb!{7.6} extends to exchangeable sequences of (not necessarily independent) observations $x_1,x_2,\dots$, provided that condition~\thetag{1} in the proof below is satisfied. 
\end{corollary}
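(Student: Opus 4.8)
The plan is to reduce the exchangeable case to the independent case of Theorem~\nmb!{7.6} by conditioning on the directing random measure furnished by de Finetti's theorem. By Corollary~\nmb!{6.6}, after possibly passing to an extended probability space, the exchangeable sequence $(x_i)_{i\in\mathbb N}$ is conditionally \iid{}: there is a random probability measure $\hat P\in\probspace(M)$, measurable with respect to the exchangeable (equivalently, tail) sigma-algebra $\mathcal T\subset\mathcal F$, such that conditionally on $\mathcal T$ the observations $x_1,x_2,\dots$ are \iid{} with common law $\hat P$. Writing $Q\in\probspace(\probspace(M))$ for the law of $\hat P$, the unconditional law of $(x_i)$ is the mixture $\int P^{\mathbb N}\,Q(\d P)$, in line with Theorem~\nmb!{6.4}. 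I would formulate condition~\thetag{1} of the present corollary as the requirement that, $Q$-almost surely, the directing measure $\hat P$ satisfies conditions~\thetag{1}--\thetag{2} of Theorem~\nmb!{7.6} with the common unweighted population $q$-mean $\bar z_0$, and that the resulting sandwich covariance is non-random; this is the ``conditional independence assumption''.

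First I would argue conditionally. Disintegrating the law of $(x_i)$ along $Q$, it suffices to treat, for $Q$-almost every $P$, the genuinely \iid{} sequence with law $P$. On this fibre the empirical laws $P_n$ converge in $\bar d_p$ to $P$, so the entire $M$-estimation apparatus underlying the proof of Theorem~\nmb!{7.6} applies verbatim: the uniform law of large numbers $P_n\bar\rho\to P\bar\rho$, the central limit theorem for the score $n^{-1/2}\sum_{i=1}^n\nabla_{\bar z}\bar\rho(x_i,\bar z_0)$, and the delta method against the non-degenerate Hessian of $P\bar\rho$ at $\bar z_0$. Condition~\thetag{1}, now imposed on $\hat P$, guarantees as in \nmb!{7.6} the existence of the Riemannian gradient of $\bar\rho(x,\cdot)$ at $\bar z_0$ for $P$-almost every $x$, while the uniform-continuity estimate for $\bar\rho(x,\cdot)$ carries over unchanged since it is pointwise in $x$. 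Hence, for $Q$-almost every $P$, the conditional sequence $\sqrt n\,u(R_n)$ converges in law to a Gaussian $\mathcal N(0,u_*\Sigma(P))$, where $\Sigma(P)$ is the sandwich covariance assembled from the conditional score covariance and the inverse Hessian.

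It then remains to integrate out the conditioning. Since $R_n\to R_0$ in probability for a single deterministic unweighted population $q$-mean $R_0$, the conditional limiting mean must equal $R_0$ for $Q$-almost every $P$; this forces $\hat P$ to be supported on measures sharing the unweighted population $q$-mean $R_0$, which is part of condition~\thetag{1}. Conditioning then expresses the unconditional limit of $\sqrt n\,u(R_n)$ as the $Q$-mixture of the Gaussians $\mathcal N(0,u_*\Sigma(P))$. The hard part will be to collapse this mixture into the single Gaussian required by Definition~\nmb!{7.5}: this is exactly where the conditional independence assumption enters, ensuring that $\Sigma(\hat P)$ is $\mathcal T$-measurable and almost surely equal to a fixed $\Sigma$ — equivalently, that the Hessian of $\hat P\bar\rho$ at $\bar z_0$ and the covariance of the score are non-random.

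Under this assumption the mixture degenerates, and I would transfer the convergence from conditional to unconditional by dominated convergence applied to characteristic functions: for each $t$ one has $\mathbb E[\exp(i\langle t,\sqrt n\,u(R_n)\rangle)]=\mathbb E\big[\mathbb E[\exp(i\langle t,\sqrt n\,u(R_n)\rangle)\mid\mathcal T]\big]$, where the inner conditional characteristic functions are bounded by $1$ and, by the fibrewise central limit theorem of the second paragraph, converge almost surely to the deterministic value $\exp(-\tfrac12 u_*\Sigma(t,t))$. Dominated convergence then yields convergence of the outer expectation to the same limit, establishing asymptotic normality with covariance $\Sigma$. Chart-independence of the statement and measurability of the selection $R_n$ follow exactly as in \nmb!{7.6} via the delta method, completing the argument.
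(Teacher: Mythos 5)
Your proposal is correct and follows the same architecture as the paper's proof: reduce to the conditionally \iid{} case via the Hewitt--Savage/de Finetti representation (Corollary~\nmb!{6.6}), apply Theorem~\nmb!{7.6} conditionally on the directing sigma-algebra $\mathcal G$, observe that the conditional limit is $\mathcal N(0,\Sigma)$ with an a priori $\mathcal G$-measurable (hence possibly random) $\Sigma$, and pass to the unconditional statement once $\Sigma$ is deterministic; your dominated-convergence argument on conditional characteristic functions is exactly the detail the paper delegates to the reference \cite[Theorem 9.2.1]{chow_probability_1997}. The one genuine difference lies in how the auxiliary condition is packaged. You postulate directly that the conditional sandwich covariance is non-random, i.e., that $\mathbb E[X_1\mid\mathcal G]=0$ and $\mathbb E[X_1\otimes X_1\mid\mathcal G]$ is deterministic, where $X_i$ denotes the gradient of $\bar\rho(x_i,\cdot)$ at $R_0$. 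The paper instead states its condition (1) purely in terms of unconditional cross-moments of consecutive observations, namely $\mathbb E[X_1]=0$, $\Cov(X_1,X_2)=0$, and $\Cov(X_1\otimes X_1,X_2\otimes X_2)=0$, and the bulk of its proof is the short computation showing that these are equivalent to your conditional statements, via identities such as $\mathbb E[X_1\otimes X_2]=\mathbb E\big[\mathbb E[X_1\mid\mathcal G]^{\otimes 2}\big]$ exploiting conditional independence. What the paper's formulation buys is a hypothesis expressed in observable, $\mathcal G$-free quantities (the directing sigma-algebra is generally inaccessible in practice); what yours buys is a shorter proof, at the price of assuming outright the property that the paper derives. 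Since the corollary explicitly defers the formulation of condition (1) to the proof, both versions are legitimate, but be aware that the equivalence computation you skip is the actual mathematical content of the paper's argument beyond the conditioning reduction.
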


\begin{proof}
By the infinite Hewitt–Savage theorem \nmb!{6.4} and its Corollary~\nmb!{6.6}, the exchangeable sequence $x_1,x_2,\dots$ is \iid{}  conditionally on some sigma algebra $\mathcal G$. 
It follows from \nmb!{7.6} that conditionally on $\mathcal G$, the sequence $R_1,R_2,\dots$ is asymptotically normal with mean $R_0$ and covariance $\Sigma$, for some $\mathcal G$-measurable symmetric bilinear form $\Sigma$ on the tangent space of $(M^q/\S_q)_{\text{reg}}$ at $R_0$. 
The covariance $\Sigma$ can be computed explicitly as follows. 
Let $\bar \rho$ be defined as in the proof of \nmb!{7.6}, 
and recall that the gradient of the function $\bar\rho(x,\cdot)$ evaluated at $R_0$ exists for $P$-almost every $x \in M$.
Therefore, for any $i\in\mathbb N_{>0}$, one may define the random variable $X_i$ as the gradient of the random function $\bar\rho(x_i,\cdot)$ evaluated at $R_0$.
Accordingly, $X_i$ is a random variable with values in the tangent space of $(M^q/\S_q)_{\text{reg}}$ at $R_0$.
Let $\bar H$ denote the Hessian of the function $P\bar\rho$ at $R_0$. 
Thanks to the non-degeneracy assumption in \nmb!{7.6}, $\bar H$ is an automorphism on the tangent space of $(M^q/\S_q)_{\text{reg}}$ at $R_0$, and we denote its inverse by $\bar H^{-1}$.
Then the covariance $\Sigma$ is given by  \cite[Theorem~11]{eltzner_smeary_2019}
\begin{equation*}
\Sigma = \frac14\Cov[\bar H^{-1}(X_1)\otimes \bar H^{-1}(X_1)|\mathcal G].
\end{equation*}
To ensure that $\Sigma$ is deterministic, we make the following assumption:
\begin{equation*}
\tag{1}
\mathbb E[X_1]=0,
\qquad
\Cov(X_1,X_2)=0,
\qquad
\Cov(X_1\otimes X_1,X_2\otimes X_2)=0.
\end{equation*}
Define $B=\mathbb E[X_1\otimes X_1]$ and $C=\Cov[X_1\otimes X_1]$.
Then the relations
\begin{align*}
0
&=
\mathbb E[X_1\otimes X_2]
=
\mathbb E[\mathbb E[X_1\otimes X_2|\mathcal G]]
=
\mathbb E[\mathbb E[X_1|\mathcal G]^{\otimes 2}],
\\
C
&=
\mathbb E[(X_1\otimes X_1-B)\otimes(X_2\otimes X_2-B)]
=
\mathbb E[\mathbb E[X_1\otimes X_1-B|\mathcal G]^{\otimes2}],
\end{align*}
show that \thetag{1} is equivalent to 
\begin{equation*}
\mathbb E[X_1|\mathcal G]=0,
\qquad
\mathbb E[X_1\otimes X_1|\mathcal G]=B.
\end{equation*}
Therefore, $\Sigma=(\bar H^{-1}\otimes \bar H^{-1})(B)$ is deterministic, as claimed.
As $R_0$ and $\Sigma$ are deterministic, the sequence $R_1,R_2,\dots$ is not only conditionally but also unconditionally asymptotically normal.
See \cite[Theorem 9.2.1]{chow_probability_1997} for further details in the Euclidean case. 
\end{proof}

\begin{appendix} 
\section{}
\nmb0{8}

\begin{definition}[Path metrics {\cite{gromov1999metric, burago2001course}}]
\nmb.{8.1}
In any metric space $(M,d)$, for any real numbers $s\leq t$, the \emph{length} of a continuous curve $c\colon [s,t]\to M$ is defined as
\begin{equation*}
\ell(c) 
= 
\sup_{\substack{n\in\mathbb N\\s=u_0\leq\dots\leq u_n=t}} \sum_{i=0}^{n-1} d\big(c(u_i),c(u_{i+1})\big) \in [0,\infty].
\end{equation*} 
The curve is said to have \emph{constant speed} $v \in \mathbb R_{\geq 0}$ if 
$\ell(c|_{[u_1,v_1]}) = v|u_1-u_2|$ for all $s\le u_1<u_2\le t$. 
The metric space is called a \emph{path-metric space} if the distance between any pair of points equals the infimum of the lengths of continuous curves joining the points. 
A \emph{minimizing geodesic} in a metric space $(M,d)$ is a continuous curve whose length equals the distance between its end points. A \emph{geodesic} is a curve whose restriction to any sufficiently small subinterval is a minimizing geodesic. 
\end{definition}

\begin{theorem}[Hopf--Rinov theorem {\cite[1.9]{gromov1999metric}}] 
\nmb.{8.2}
If $(M,d)$ is a connected complete locally compact path-metric space then: 
\begin{enumerate}
\item Closed balls are compact, or, equivalently, each bounded closed subset of $M$ is compact.
\item Any two points can be joined by a minimizing geodesic.
\end{enumerate}
\end{theorem}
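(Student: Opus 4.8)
The plan is to prove the two assertions in turn, deriving (2) from (1) by a compactness-and-selection argument. For part (1) I would run the classical ``continuity of the radius'' argument adapted to length spaces. Fix a base point $o\in M$ and set $\rho\coloneqq\sup\{r\ge 0:\bar B_r(o)\text{ is compact}\}$, where $\bar B_r(o)$ is the closed metric ball. Local compactness gives some $\ep>0$ with $\bar B_\ep(o)$ compact, so $\rho>0$, and the goal is to show $\rho=\infty$. First I would check that $\bar B_\rho(o)$ is itself compact (so the supremum is attained): being a closed subset of the complete space $M$ it is complete, and for total boundedness I fix $\ep>0$ and take a finite $\ep$-net $N$ of the compact ball $\bar B_{\rho-\ep/2}(o)$. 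Here the path-metric hypothesis enters decisively: for $y$ with $\rho-\ep/2\le d(o,y)\le\rho$ I choose a curve from $o$ to $y$ of length at most $d(o,y)+\ep/4$ and take the point $z$ on it at arclength $\rho-\ep/2$ from $o$; then $d(o,z)\le\rho-\ep/2$ and $d(z,y)<\ep$, so $N$ is a $2\ep$-net of $\bar B_\rho(o)$ (points with $d(o,y)\le\rho-\ep/2$ being handled directly). Thus $\bar B_\rho(o)$ is totally bounded and complete, hence compact.

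Next I would show $\rho=\infty$ by contradiction. If $\rho<\infty$, then applying local compactness at each point of the compact set $\bar B_\rho(o)$ and extracting a finite subcover yields a uniform $\delta>0$ such that $\bar B_\delta(y)$ is compact for every $y\in\bar B_\rho(o)$. Using the length-space property once more, every point at distance at most $\rho+\delta/2$ from $o$ lies in $\bar B_\delta(y)$ for some $y\in\bar B_\rho(o)$, by sliding back along a near-minimizing curve to arclength $\rho$. Hence $\bar B_{\rho+\delta/2}(o)$ is a closed subset of a finite union of compact balls and is therefore compact, contradicting the maximality of $\rho$. So $\rho=\infty$, every closed ball is compact, and the equivalence with compactness of bounded closed sets is immediate.

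For part (2), given $x,y\in M$ I would take a minimizing sequence of curves $c_m\colon[0,1]\to M$ from $x$ to $y$, each reparametrized with constant speed, so that $\ell(c_m)\to d(x,y)$. All $c_m$ take values in the ball $\bar B_{d(x,y)+1}(x)$, which is compact by part (1), and they are uniformly Lipschitz with constant at most $d(x,y)+1$, hence equicontinuous. By the Arzel\`a--Ascoli theorem a subsequence converges uniformly to a curve $c$ joining $x$ to $y$, and lower semicontinuity of length under uniform convergence gives $\ell(c)\le\liminf_m\ell(c_m)=d(x,y)$. Since $\ell(c)\ge d(x,y)$ always, $c$ is a minimizing geodesic.

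The main obstacle is the interplay in part (1) between completeness, local compactness, and the path-metric structure: neither completeness nor local compactness alone upgrades local to global compactness, and the length property is exactly what lets me slide back along a near-minimizing curve to reduce an arbitrary point of $\bar B_\rho(o)$, or of $\bar B_{\rho+\delta/2}(o)$, to one already controlled by a compact inner ball. Dropping the path-metric hypothesis breaks this step, and indeed the statement fails for general complete locally compact metric spaces.
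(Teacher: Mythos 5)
The paper does not prove this theorem at all: it is quoted verbatim from Gromov \cite[1.9]{gromov1999metric} as a tool, so there is no internal proof to compare against. Your argument is the classical Hopf--Rinow--Cohn-Vossen proof (essentially the one in Gromov and in Burago--Burago--Ivanov), and it is correct. Part (1) is handled by the standard ``critical radius'' argument, and both places where the path-metric hypothesis is genuinely needed are used correctly: a near-minimizing curve from $o$ to a point $y$ with $d(o,y)\le\rho$ has length at least $\rho-\ep/2$, so the point at arclength $\rho-\ep/2$ exists, lies in the compact inner ball, and is within $\ep$ of $y$; and the same sliding-back device, combined with a uniform local-compactness radius $\delta$ extracted from a finite subcover of the compact ball $\bar B_\rho(o)$, shows $\bar B_{\rho+\delta/2}(o)$ is a closed subset of a finite union of compact balls, contradicting maximality of $\rho$. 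Part (2) is the standard Arzel\`a--Ascoli argument; the two facts you invoke without proof (constant-speed reparametrization of a rectifiable curve, and lower semicontinuity of length under uniform convergence) are standard and legitimate to cite. Your closing remark is also on point: the result genuinely fails without the path-metric hypothesis (e.g.\ an infinite discrete space with all distances $1$ is complete and locally compact but has non-compact closed balls), so the sliding-back step is not a convenience but the crux.
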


\begin{theorem}[Characterization of path metrics {\cite[Theorem 1.8]{gromov1999metric}}]
\nmb.{8.3}
The following properties of a metric space $(M,d)$ are equivalent:
\begin{enumerate}
\item For any points $x,y\in M$ and $r>1/2$ there exists a point $z\in M$ such that 
$$
\max\{d(x,z),d(z,y)\} \le r d(x,y).
$$
\item For all $x,y\in M$ and $r_1,r_2>0$ with $r_1+r_2\le d(x,y)$ we have 
\begin{align*}
d(B(x,r_1),B(y,r_2))&:= \inf\{d(x',y'): d(x',x)\le r_1, d(y',y)\le r_2\}
\\&
\le d(x,y) -r_1 -r_2.
\end{align*}
\end{enumerate}
Every path-metric space has these properties. Conversely, a complete metric space with property \thetag{1} or \thetag{2} is a path-metric space.
\end{theorem}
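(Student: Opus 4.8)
The plan is to dispatch the two easy implications first (a path-metric space satisfies properties (1) and (2), and (2)$\Rightarrow$(1)), then establish (1)$\Rightarrow$(2) via finite approximate geodesics, and finally concentrate on the converse completeness statement, which carries the real content. Throughout I would use only the definition of length and of path-metric space from \nmb!{8.1}, together with the elementary fact that $d(x,y)\le\ell(c)$ for every curve $c$ joining $x$ to $y$.

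First I would show that a path-metric space has properties (1) and (2). For (1), given $x,y$ and $r>1/2$, pick a continuous curve $c$ from $x$ to $y$ with $\ell(c)<2r\,d(x,y)$, which exists because $d(x,y)$ is the infimum of lengths and $2r>1$. The partial-length function $t\mapsto\ell(c|_{[0,t]})$ is continuous and runs from $0$ to $\ell(c)$, so by the intermediate value theorem there is a parameter where it equals $\ell(c)/2$; the corresponding point $z$ satisfies $d(x,z),d(z,y)\le\ell(c)/2<r\,d(x,y)$. For (2) I would argue analogously: choose $c$ with $\ell(c)<d(x,y)+\epsilon$ (noting $\ell(c)\ge d(x,y)\ge r_1+r_2$), and take $x'$ at partial length $r_1$ from $x$ and $y'$ at partial length $r_2$ from $y$ along $c$. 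Then $d(x',x)\le r_1$, $d(y',y)\le r_2$, and $d(x',y')\le\ell(c)-r_1-r_2<d(x,y)-r_1-r_2+\epsilon$; letting $\epsilon\downarrow0$ yields (2).

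Next I would prove the equivalence of (1) and (2) for an arbitrary metric space. For (2)$\Rightarrow$(1), given $x,y$ and $r>1/2$, apply (2) with $r_1=r_2=d(x,y)/2$ (admissible since $r_1+r_2=d(x,y)$); the infimum being $\le0$ produces points $x',y'$ with $d(x,x'),d(y,y')\le d(x,y)/2$ and $d(x',y')$ arbitrarily small, so taking $z=x'$ gives $\max\{d(x,z),d(z,y)\}\le d(x,y)/2+\epsilon\le r\,d(x,y)$ for small $\epsilon$. For (1)$\Rightarrow$(2) I would iterate the approximate-midpoint property: bisecting dyadically $k$ times with a fixed ratio $r$ close to $1/2$ produces a finite chain $x=z_0,\dots,z_{2^k}=y$ whose consecutive distances multiply down, so that $\sum_i d(z_i,z_{i+1})\le(2r)^k\,d(x,y)$, which can be made as close to $d(x,y)$ as desired (with arbitrarily fine spacing) by taking $k$ large and $r$ close to $1/2$. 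Selecting the chain vertices whose cumulative distance from $x$ first reaches $r_1$ and from $y$ first reaches $r_2$, and summing along the chain via the triangle inequality, yields (2). This direction uses only finitely many points, hence no completeness.

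The main obstacle is the converse: a \emph{complete} space satisfying (1) (equivalently (2)) is a path-metric space. Here I would construct, for each target constant $C>1$, a curve from $x$ to $y$ of length at most $C\,d(x,y)$. Fix radii $r_k\downarrow1/2$ with $\prod_{k\ge1}(2r_k)=C$ (possible since $\sum(2r_k-1)<\infty$ can be arranged with product close to $1$), and define $c$ on the dyadic rationals of $[0,1]$ by $c(0)=x$, $c(1)=y$, inserting at level $k$ each new midpoint as the $r_k$-approximate midpoint of its already-defined neighbours. A level-wise induction then gives $d(c((j-1)2^{-k}),c(j2^{-k}))\le d(x,y)\prod_{i=1}^{k}r_i$, whence the dyadic map is uniformly continuous in the parameter; completeness of $M$ lets me extend $c$ to a continuous curve on all of $[0,1]$. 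The crux is to verify that the length of this limit curve is still bounded by $C\,d(x,y)$: summing the level-$k$ bounds over the $2^k$ subintervals gives $\sum_j d(c((j-1)2^{-k}),c(j2^{-k}))\le d(x,y)\prod_{i=1}^{k}(2r_i)\le C\,d(x,y)$ for every dyadic partition, and continuity together with the definition of length in \nmb!{8.1} upgrades this to $\ell(c)\le C\,d(x,y)$. Since $d(x,y)\le\ell(c)$ always and $C>1$ is arbitrary, the infimum of lengths equals $d(x,y)$, so $M$ is a path-metric space. I expect the delicate points to be the bookkeeping that makes the dyadic construction consistent under refinement and the passage from the discrete bounds to the length of the completed curve; completeness enters precisely to guarantee that the limit curve exists.
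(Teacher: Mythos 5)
Your argument is correct; note that the paper itself gives no proof of \nmb!{8.3} but simply cites Gromov's Theorem~1.8, and your construction (approximate midpoints read off the arc-length parametrization for the forward direction, and the dyadic midpoint-insertion with ratios $r_k\downarrow 1/2$ and controlled product $\prod_k(2r_k)$ plus completeness for the converse) is exactly the classical argument found there and in Burago--Burago--Ivanov. The only point to make explicit in a write-up is that in (1)$\Rightarrow$(2) the ratio $r$ must be chosen depending on $k$ (e.g.\ $2r=(1+\delta)^{1/k}$) so that $(2r)^k$ stays bounded while $r^k\to0$, which your phrasing already implicitly does.
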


\begin{definition}[Lagrangian actions]
\nmb.{8.4}
Following \cite[Definition~7.11]{villani2008optimal}, 
a \emph{Lagrangian energy--action pair} $(E,A)$ on a topological space $M$ is a family of energy functionals $E^{s,t}\colon M\times M\to \mathbb R$ and action functionals $A^{s,t}\colon C([s,t],M)\to\mathbb R$, indexed by real numbers $s\leq t$, which satisfies the following three properties:
\begin{enumerate}
\item for all $r\leq s\leq t$, $A^{r,s}+A^{s,t}=A^{r,t}$,
\item for all $s\leq t$ and $x,y \in M$, 
\begin{equation*}
E^{s,t}(x,y) 
=
\inf_{\substack{c \in C([s,t],M)\\c(s)=x, c(t)=y}} A^{s,t}(c).
\end{equation*}
\item for all $s\leq t$ and $c \in C([s,t],M)$, 
\begin{equation*}
A^{s,t}(c)
=
\sup_{\substack{n\in\mathbb N\\s=u_0\leq\dots\leq u_n=t}} \sum_{i=0}^{n-1} E^{u_i,u_{i+1}}(c(u_i),c(u_{i+1})).
\end{equation*}
\end{enumerate}

Curves which assume the minimum in (2) are called \emph{minimizing curves} for $(E,A)$. 
\end{definition}

Examples of Lagrangian energy--action pairs on path-metric spaces $(M,d)$ are $(d,\ell)$ as well as the functionals described in the following lemma, which are related to the Riemannian or Finsler energy.

\begin{lemma}[Lagrangian actions]
\nmb.{8.5}
For any path-metric space $(M,d)$ and $p \in (1,\infty)$, 
the following defines a Lagrangian energy--action pair $(E,A)$: 
\begin{align*}
E^{s,t}(x,y)
&=
\frac{d(x,y)^p}{|s-t|^{p-1}},
&
A^{s,t}(c) 
&= 
\sup_{\substack{n\in\mathbb N\\s=u_0\leq \dots\leq u_n=t}}
\sum_{i=0}^{n-1}
\frac{d(c(u_i),c(u_{i+1}))^p}{|u_i-u_{i+1}|^{p-1}}.
\end{align*}
Minimizing curves for $(E,A)$ are exactly constant-speed minimizing geodesics. 
\end{lemma}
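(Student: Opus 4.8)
The plan is to verify directly the three defining properties of a Lagrangian energy--action pair in \nmb!{8.4} and then to read off the characterization of minimizers from the equality case of a single convexity inequality. Property (3) of \nmb!{8.4} holds by construction, since $A^{s,t}(c)$ is defined precisely as the supremum over partitions of $\sum_m E^{u_m,u_{m+1}}(c(u_m),c(u_{m+1}))$. Thus only properties (1) and (2) require work, and both rest on the elementary inequality
\begin{equation*}
\frac{(a+b)^p}{(\lambda+\mu)^{p-1}} \le \frac{a^p}{\lambda^{p-1}} + \frac{b^p}{\mu^{p-1}},
\qquad a,b\ge 0,\ \lambda,\mu>0,
\end{equation*}
which I would prove by writing each summand as $\lambda\,\phi(a/\lambda)$ with $\phi(r)=r^p$ convex and applying Jensen with weights $\lambda/(\lambda+\mu)$ and $\mu/(\lambda+\mu)$. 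Strict convexity of $\phi$ for $p>1$ gives equality if and only if $a/\lambda=b/\mu$; this is exactly where the hypothesis $p\in(1,\infty)$ enters.

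Combining this inequality with the triangle inequality $d(c(u),c(w))\le d(c(u),c(v))+d(c(v),c(w))$ shows that inserting a partition point never decreases the Riemann-type sum. Refinement monotonicity then yields property (1): in computing $A^{r,t}(c)$ one may restrict to partitions containing $s$, and such partitions split additively into a part on $[r,s]$ and a part on $[s,t]$, so $A^{r,t}=A^{r,s}+A^{s,t}$. For property (2), the trivial partition $\{s,t\}$ gives $A^{s,t}(c)\ge E^{s,t}(c(s),c(t))$, hence $\inf_c A^{s,t}(c)\ge E^{s,t}(x,y)$. For the reverse inequality I would, given $\ep>0$, use the path-metric property of $(M,d)$ from \nmb!{8.1} to pick a curve from $x$ to $y$ of length $\ell< d(x,y)+\ep$, reparametrize it to constant speed $\ell/(t-s)$, and estimate $d(c(u_m),c(u_{m+1}))\le \ell|u_{m+1}-u_m|/(t-s)$ on each subinterval; summing gives $A^{s,t}(c)\le \ell^p/(t-s)^{p-1}\to E^{s,t}(x,y)$ as $\ep\to0$.

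For the final assertion I would characterize the curves attaining the infimum. If $c$ is a constant-speed minimizing geodesic then $d(c(u),c(v))=\ell\,|u-v|/(t-s)$ for all $u\le v$, so every summand equals $(\ell/(t-s))^p|u_{m+1}-u_m|$ independently of the partition and $A^{s,t}(c)=\ell^p/(t-s)^{p-1}=E^{s,t}(x,y)$, i.e. $c$ is minimizing. Conversely, suppose $A^{s,t}(c)=E^{s,t}(x,y)$. For any $v\in(s,t)$ the partition $\{s,v,t\}$ forces the chain of inequalities above to be equalities, so simultaneously $d(c(s),c(v))+d(c(v),c(t))=d(x,y)$ (tight triangle inequality) and $d(c(s),c(v))/(v-s)=d(c(v),c(t))/(t-v)$ (equality in the convexity inequality). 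By additivity (1) the same holds on every subinterval, so iterating over nested partitions gives $d(c(u),c(w))=d(c(u),c(v))+d(c(v),c(w))$ for all $u\le v\le w$ and $d(c(s),c(v))$ linear in $v$. The first identity shows $\ell(c)=d(x,y)$, i.e. $c$ is a minimizing geodesic, and the second shows that arc length is linear in the parameter, i.e. $c$ has constant speed.

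The main obstacle I anticipate is the bootstrapping in the converse direction: the equality case of the convexity inequality is only extracted at a single inserted point, and I must upgrade it to the metric additivity and linearity identities holding for \emph{all} parameter values. This is handled by invoking additivity to see that $c$ restricts to a minimizer on every subinterval, and then applying the three-point analysis on a countable dense family of nested partitions, using continuity of $c$ and of $d$ to pass to all $u\le v\le w$. A minor point to keep in mind is the reparametrization step in property (2): a continuous rectifiable curve admits a constant-speed (indeed Lipschitz) reparametrization which keeps the curve continuous and only decreases the action, so no completeness of $M$ is needed here.
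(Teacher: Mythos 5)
Your proposal is correct and follows essentially the same route as the paper: your Jensen inequality for $r\mapsto r^p$ is precisely the $p$-th power of the H\"older inequality the paper uses, and both arguments characterize minimizers via the simultaneous equality cases (tight triangle inequality plus proportionality of increments). The only substantive difference is that you supply the refinement-monotonicity argument needed for property (1), which the paper asserts holds ``by definition''; your version is the more careful one.
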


\begin{proof}
Properties (1) and (3) of Lagrangian actions hold by definition.
Property (2) can be verified as follows: as $(M,d)$ is a path-metric space, the definition of the energy implies for any real numbers $s\leq t$ and points $x,y \in M$ that
\begin{align*}
E^{s,t}(x,y)
&=
\inf_{\substack{c \in C([s,t],M)\\c(s)=x, c(t)=y}}
\sup_{\substack{n\in\mathbb N\\s=u_0\leq\dots\leq u_n=t}} 
\frac{\left(\sum_{i=0}^{n-1} d(c(u_i),c(u_{i+1}))\right)^p}{|s-t|^{p-1}}.
\end{align*}
Estimating the right-hand side using H\"older's inequality yields
\begin{align*}
E^{s,t}(x,y)
&\leq
\inf_{\substack{c \in C([s,t],M)\\c(s)=x, c(t)=y}}
\sup_{\substack{n\in\mathbb N\\s=u_0\leq\dots\leq u_n=t}} 
\sum_{i=0}^{n-1} \frac{d(c(u_i),c(u_{i+1}))^p}{|s-t|^{p-1}}
=
\inf_{\substack{c \in C([s,t],M)\\c(s)=x, c(t)=y}}
A^{s,t}(c).
\end{align*}
For constant-speed curves, H\"older's inequality is an equality. 
Moreover, any continuous curve can be reparameterized to constant speed. 
Therefore, the preceding inequality is actually an equality. 
This shows (2). 

The statement about minimizing curves hinges on the  following H\"older inequality: for all $u\leq v\leq w$ in the domain of a continuous curve $c\colon[s,t]\to M$,
\begin{align*}
\hspace{2em}&\hspace{-2em}
d(c(u),c(v))+d(c(v),c(w)
\\&=
\frac{d(c(u),c(v))}{|u-v|^{(p-1)/p}}|u-v|^{(p-1)/p}+\frac{d(c(v),c(w)}{|v-w|^{(p-1)/p}}|v-w|^{(p-1)/p}
\\&\leq
\big(E^{u,v}(c(u),c(v))+E^{v,w}(c(v),c(w) \big)^{1/p}
|u-w|,
\end{align*}
with equality if and only if the vector $\big(d(c(u),c(v)),d(c(v),c(w))\big)$ in $\mathbb R^2$ is parallel to the vector $(v-u,w-v)$.

Let $c\colon[s,t]\to M$ be a continuous curve. 
Then $c$ is a minimizing geodesic with constant speed if and only if it satisfies for all $u\leq v\leq w$ in $[s,t]$ that 
\begin{equation*}
\begin{aligned}
d(c(u),c(v))+d(c(v),c(w) 
&= 
d(c(u),c(w)),
\\
d(c(u),c(v))&=d(c(s),c(t))|u-v|.
\end{aligned}
\end{equation*}
Equivalently, by the above H\"older inequality, it holds for all $u\leq v\leq w$ in $[s,t]$ that
\begin{equation*}
E^{u,v}(c(u),c(v))+E^{v,w}(c(v),c(w) 
= 
E^{u,w}(c(u),c(w)),
\end{equation*}
which means that $c$ minimizes the energy--action pair $(E,A)$. 
\end{proof}

\begin{lemma}[Atomic distributions]
\nmb.{8.6}
  Let $M$ be a metric space or, more generally, a first-countable space. 
Then the set $\probspace_n(M)$ coincides with the set of $\{0,1/n,\dots,1\}$-valued probability distributions on $M$.
\end{lemma}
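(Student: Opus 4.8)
The statement claims that $\probspace_n(M)$, the set of empirical measures $\frac1n\sum_{i=1}^n\delta_{x_i}$ from \nmb!{4.6}, coincides with the set of probability measures taking only the values $0,\tfrac1n,\dots,1$. One inclusion is immediate: if $P=\frac1n\sum_{i=1}^n\delta_{x_i}$, then $P(A)=\frac1n\#\{i:x_i\in A\}$ for every Borel set $A$, which is a multiple of $\tfrac1n$ in $[0,1]$. The content is the converse, and the plan is to show that a $\{0,\tfrac1n,\dots,1\}$-valued probability measure $P$ is concentrated on at most $n$ points, each carrying mass a positive multiple of $\tfrac1n$.

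First I would extract the atoms using first-countability. Fix $x\in M$ and a countable neighborhood basis at $x$; replacing each basis element by the intersection of the previous ones, I may assume it is a decreasing sequence $U_1\supseteq U_2\supseteq\cdots$. In a $T_1$ (in particular metric) space one has $\bigcap_k U_k=\{x\}$, so continuity of the finite measure $P$ from above gives $P(U_k)\downarrow P(\{x\})$. Since every $P(U_k)$ lies in the finite set $\{0,\tfrac1n,\dots,1\}$, a non-increasing sequence there is eventually constant, whence $P(U_k)=P(\{x\})$ for all large $k$. This yields the dichotomy: either $P(\{x\})\ge\tfrac1n$, or else $P(\{x\})=0$ and $x$ has a neighborhood of $P$-measure $0$.

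Let $S\coloneqq\{x:P(\{x\})>0\}$. By the dichotomy each point of $S$ has mass $\ge\tfrac1n$, so $\sum_{x\in S}P(\{x\})\le 1$ forces $S=\{x_1,\dots,x_m\}$ to be finite with $m\le n$; write $P(\{x_i\})=k_i/n$ with $k_i\in\mathbb N_{>0}$. It remains to prove $P(M\setminus S)=0$. Every $x\notin S$ possesses, by the dichotomy, an open neighborhood $V_x$ with $P(V_x)=0$, and these cover the open set $M\setminus S$. Passing to a countable subcover---available because separable metric spaces are second countable and hence hereditarily Lindel\"of---shows $P(M\setminus S)=0$. Consequently $\sum_i k_i/n=P(S)=1$, i.e.\ $\sum_i k_i=n$, and $P=\frac1n\sum_{i=1}^m k_i\,\delta_{x_i}$, which is exactly an element of $\probspace_n(M)$ once each $x_i$ is listed with multiplicity $k_i$.

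The main obstacle is this final step, namely ruling out any diffuse part of $P$: first-countability localizes the information to null neighborhoods of the non-atoms, but upgrading this to the global statement $P(M\setminus S)=0$ requires the countable-subcover argument, and hence separability (Lindel\"of) of $M$. Alternatively, once the atoms are identified as genuine point masses, a diffuse remainder can be excluded directly by Sierpi\'nski's theorem on the range of an atomless finite measure: such a measure would attain every value in $[0,P(M\setminus S)]$, in particular $\tfrac1{2n}\notin\{0,\tfrac1n,\dots\}$, contradicting the hypothesis. Either route completes the identification.
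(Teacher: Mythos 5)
Your proof is correct and follows essentially the same route as the paper's: a decreasing neighborhood basis combined with the finiteness of the value set $\{0,1/n,\dots,1\}$ yields the dichotomy between atoms of mass at least $1/n$ and points with a $P$-null neighborhood, hence a finite atom set, and then one shows $P$ charges nothing else. You are in fact more explicit than the paper about that last step---the paper concludes by asserting the support is finite and that $P$ is therefore a weighted sum of Diracs, which (like your Lindel\"of subcover argument) silently relies on the standing separability assumption on $M$.
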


\begin{proof}
Clearly, every distribution in $\probspace_n(M)$ takes values in $\{0,1/n,\dots,1\}$.
Conversely, assume that $P$ is a $\{0,1/n,\dots,1\}$-valued probability distribution. 
Let $x \in M$, and let $(U_i)_{i\in\mathbb N}$ be a decreasing basis of open neighborhoods of $x$. 
If $\min_{i\in\mathbb N}P(U_i)$ vanishes, then it vanishes for sufficiently large $i$, and consequently $x$ does not belong to the support of $P$. 
Otherwise, $P(\{x\})=\min_{i\in\mathbb N}P(U_i)\geq \frac1n$, which can be the case for only finitely many $x \in M$. 
Therefore, the support of $P$ is a finite set.
It follows that $P$ is a weighted sum of Dirac measures at distinct points in $M$.
Necessarily, the weights are multiples of $1/n$.  
\end{proof}
\end{appendix}

\section*{Acknowledgements}
The authors would like to thank Fran\c{c}ois-Xavier Vialard for helpful discussions. 
P. Harms was funded by the National Research Foundation Singapore under the award NRF-NRFF13-2021-0012 and by Nanyang Technological University Singapore under the award NAP-SUG.
X. Pennec was funded by the ERC grant Nr.\@ 786854 G-Statistics from the European Research Council under the European Union's Horizon 2020 research and innovation program. He was also supported by the French government through the 3IA Côte d’Azur Investments ANR-19-P3IA-0002 managed by the National Research Agency (ANR). 
S. Sommer is supported by the Villum Foundation Grants 40582 and the Novo Nordisk Foundation grant NNF18OC0052000.

\bibliographystyle{abbrv}
\bibliography{configuration-spaces}

\end{document}